\DeclareMathAlphabet{\mathpzc}{OT1}{pzc}{m}{it}
\newtheorem{thm}{Theorem}[section]
\newtheorem{theorem}[thm]{Theorem}
\newtheorem{Ex}[thm]{Example}
\newtheorem{rmk}[thm]{Remark}
\newtheorem{Cor}[thm]{Corollary}
\newtheorem{Prop}[thm]{Proposition}
\newtheorem{prop}[thm]{Proposition}
\newtheorem{lem}[thm]{Lemma}
\newtheorem{Lem}[thm]{Lemma}
\newtheorem{lemma}[thm]{Lemma}
\newtheorem{dfn}[thm]{Definition}
\numberwithin{equation}{section}
\newcommand{\be}{begin{equation}}
\newcommand{\z}{\mathbb{Z}}
\newcommand{\N}{\mathbb{N}}
\renewcommand{\c}{\mathbb{C}}
\newcommand{\R}{\mathbb{R}}
\newcommand{\cal}{\mathcal}
\newcommand{\SL}{\operatorname{SL}}
\newcommand{\bp}{\begin{pmatrix}}
\newcommand{\ep}{\end{pmatrix}}
\renewcommand{\bp}{{\rm bp}}
\newcommand{\rok}{{\mathcal O}_\mathbb{K}}
\newcommand{\rol}{{\mathcal O}_\mathbb{L}}
\newcommand{\roku}{{\mathcal O}_\mathbb{K}^\times}
\newcommand{\norm}[1]{\lVert #1 \rVert}
\newcommand{\norme}[1]{\lVert #1 \rVert_{e}}
\newcommand{\abs}[1]{\lvert #1 \rvert}
\newcommand{\op}{\operatorname}
\renewcommand{\deg}{\operatorname{deg}}
\newcommand{\cl}[1]{\overline{#1}}
\newcommand{\Lie}{{\rm Lie}}
\renewcommand{\be}{\begin{equation}}
\newcommand{\ee}{\end{equation}}
\newcommand{\ben}{\begin{enumerate}}
\newcommand{\een}{\end{enumerate}}
\newcommand{\F}{\operatorname{F}\!}
\newcommand{\FM}{\F X}
\newcommand{\RFM}{\op{RF}X}
\newcommand{\core}{\text{core}}
\newcommand{\hull}{\op{hull}}
\newcommand{\bq}{\begin{quotation}}
\newcommand{\eq}{\end{quotation}}
\newcommand{\m}[1]{\mathbb{ #1}}
\newcommand{\PGLt}{\operatorname{PGL}_2}
\newcommand{\GLt}{\operatorname{GL}_2}
\newcommand{\PSLt}{\operatorname{PSL}_2}
\newcommand{\qpo}{\mathbb{Q}_p(\omega)}
\newcommand{\qpl}{\mathbb{Q}_p(\lambda)}
\newcommand{\qp}{\mathbb{Q}_p}
\newcommand{\qpob}{\overline{\mathbb{Q}_p(\omega)}}
\newcommand{\qpb}{\overline{\mathbb{Q}_p}}
\newcommand{\zp}{\mathbb{Z}_p}
\newcommand{\zpo}{\mathbb{Z}_p(\omega)}
\newcommand{\zpl}{\mathbb{Z}_p(\lambda)}
\newcommand{\omg}{\omega}
\newcommand{\al}{\alpha}
\newcommand{\ut}{\tau}
\newcommand{\xx}{\times}
\newcommand{\vtxg}{{\bf{V}}(T_{G})}
\newcommand{\ax}{\operatorname{Axis}}
\newcommand{\dis}{\displaystyle}
\newcommand{\pn}[1]{\abs{#1}_p}
\newcommand{\vep}{\varepsilon}
\newcommand{\qua}[1]{(a_{#1},b_{#1},k_{#1},u_{#1})}
\newcommand{\opp}{O^{+}}
\newcommand{\onn}{O^{-}}
\newcommand{\bv}{{\mathbf v}}
\newcommand{\bu}{{\mathbf u}}
\newcommand{\bvv}{{\mathbf v}^{*}}
\newcommand{\bw}{{\mathbf w}}
\newcommand{\ggl}{\mathbb{G}(\mathbb{L})}
\newcommand{\diag}{\op{diag}}
\newcommand{\nrc}[1]{N_{r}^{C,S_{\Gamma}}(#1)}
\newcommand{\dbloverline}[1]{\overline{\dbl@overline{#1}}}
\newcommand{\dbl@overline}[1]{\mathpalette\dbl@@overline{#1}}
\newcommand{\dbl@@overline}[2]{%
  \begingroup
  \sbox\z@{$\m@th#1\overline{#2}$}%
  \ht\z@=\dimexpr\ht\z@-2\dbl@adjust{#1}\relax
  \box\z@
  \ifx#1\scriptstyle\kern-\scriptspace\else
  \ifx#1\scriptscriptstyle\kern-\scriptspace\fi\fi
  \endgroup
}
\newcommand{\dbl@adjust}[1]{%
  \fontdimen8
  \ifx#1\displaystyle\textfont\else
  \ifx#1\textstyle\textfont\else
  \ifx#1\scriptstyle\scriptfont\else
  \scriptscriptfont\fi\fi\fi 3
}
\newcommand{\scc}{\mathbb{S}_{p}^{1}}
\newcommand{\mup}{\mathbb{\mu}_{p^{2}-1}}
\newcommand{\gm}{\gamma}
\newcommand{\Gm}{\Gamma}
\newenvironment{equationate}{%
 \itemize
 \let\orig@item\item
 \def\item{\orig@item[]\refstepcounter{equation}
 (\theequation) 
 \hspace{5pt}
 \def\item{\hfill(\theequation)\orig@item[]\refstepcounter{equation}}}
}{%
 \enditemize
}
\begin{document}

\title[]{$\PGLt(\qp)$-orbit closures on a $p$-adic homogeneous space of infinite volume}

\author{Jinho Jeoung and Seonhee Lim}

\begin{abstract} 
Let $\mathbb{K}$ be an unramified quadratic extension of $\qp$ for a fixed $p>2$. 
Projective general linear groups $G=\PGLt(\mathbb{K})$ and $H=\PGLt(\qp)$ act transitively on Bruhat-Tits trees $T_G$ and $T_H$, respectively. 
We identify $G/H$ with the set of $H$-subtrees $G.T_{H}$.
Let $\Gamma$ be a Schottky subgroup such that $\Gamma\backslash T_{G}$ is infinite volume and has an additional condition named high-branchedness,
and let $\Lambda$ be its limit set.

We classify $\Gamma$-orbits in $G/H$.
Let $C=g_{C}H\in G/H$.
As a generalization of Ratner's theorem, if $\Gamma\backslash g_{C}.T_{H}$ meets the convex core of $\Gamma\backslash T_{G}$, then the $\Gamma$-orbit of $C$ is either dense or closed in
$
\cal{C}_{\Lambda}=\{g H: \partial(g.T_{H})\cap\Lambda\neq\varnothing\}$.

\end{abstract}
\maketitle

\section{Introduction}

The celebrated Ratner's measure classification theorem and orbit closure theorem, stand on homogeneous spaces with finite volume over a local field of characteristic zero \cite{R1,R2,R3,R4,R5,MT}.
For positive characteristics, we refer to \cite{EM,MG,M}.
Thanks to her theorems,
unipotent dynamics have been an important tool for classifying the closures of unipotent orbits.
As in her paper \cite{R1}, a direct consequence of her theorem is the classification of $\SL_{2}(\R)$-orbits in a homogeneous space $\Gm\backslash\SL_{2}(\c)$, where $\Gm$ is a lattice in $\SL_{2}(\c)$.
In this case, every $\SL_{2}(\R)$-orbit is either closed or dense in the homogeneous space.
Since $\Gm\backslash\SL_{2}(\c)$ can be viewed as a frame bundle of the 3-manifold $\Gamma\backslash\m{H}^{3}$ of finite volume, it is an example of geometric rigidity.
We remark that this is a 3-dimensional case of a more general result of Shah \cite{Sha}.
Shah proved his theorem in a purely topological way while Ratner used her measure classification theorem.

On the other hand, developments in geometric rigidity within homogeneous spaces of infinite volume have recently emerged \cite{MMO,MMO2,KO,LO,Ka,TZ}.
The technical difficulty of the infinite volume case arises from the fact that, 
unlike the finite volume case, we cannot directly use the recurrence of unipotent orbits into the given compact region as almost every unipotent orbit (with respect to Haar measure) escapes any compact region (see Remark \ref{rmk1}, and also \cite{MMO}). 
Nevertheless, Mohammadi, McMullen, and Oh introduced the concept of $K$-thickness to capture scarce recurrence of unipotent orbits into a compact region \cite{MMO}.
Among 3-manifolds of infinite volume, 
convex cocompact acylindrical 3-manifolds provide scarce recurrence allowing for the application of their method, known as the unipotent blowup.
Thanks to their insights, unipotent dynamics remains a crucial tool for the classification of orbit closures.

In this article,
we investigate the classification of $\PGLt(\qp)$-orbit closures on a non-Archimedean homogeneous space $\Gm\backslash\PGLt(\m{K})$ of infinite volume.
We need a suitable discrete subgroup $\Gm$, a non-Archimedean analog of a convex cocompact acylindrical Kleinian subgroup, i.e., a subgroup in $\PGLt(\m{K})$, where $\m{K}$ is the unramified quadratic extension of $\qp$ for odd prime $p$.
We will call such discrete subgroups convex cocompact highly-branched Schottky groups (see Definition \ref{def:acyl}).

Let $p$ be an odd prime number.
For an unramified quadratic extension of the $p$-adic field $\qp$, we express
$$
\mathbb{K}=\qpo=\{x+\omg y:x,y\in\qp \},
$$
where $\omg$ is a root of a degree 2 irreducible polynomial $f(t)\in\mathbb{F}_{p}[t]$.
The rings of integers of $\qp$ and $\qpo$ are $\zp$ and $\zpo$, respectively.
Let
\begin{align*}
	G=\PGLt(\qpo)\cong\op{Isom}^{+}(T_{G}), 
	\hspace{10pt} 
	H=\PGLt(\qp)\cong\op{Isom}^{+}(T_{H}),
\end{align*}
where $T_{G}$ and $T_{H}$ are the \emph{Bruhat-Tits trees} of $G$ and $H$, respectively.
Here, $T_{G}$ and $T_{H}$ have degree $p^{2}+1$ and $p+1$, respectively.
We regard $T_{H}$ as a subtree of $T_{G}$ (see Section \ref{ss:bt}).
For a vertex $\bv$ in a tree or a graph $T$, denote the degree of $\bv$ in $T$ by $\deg_{T}(\bv)$.
%

Let $\Gamma$ be a finitely generated torsion-free discrete subgroup of $\PGLt(\qpo)$ of infinite covolume.
Since a subgroup $\Gamma$ of $\PGLt(\qpo)$ for which $\Gamma\backslash T_{G}$ of infinite volume is virtually Schottky (\cite[Proposition 1.7]{Lub}, and \cite[Proposition 3.7]{HeH}),
we may assume that $\Gamma$ is a Schottky subgroup of $G$.
Thus, let $\Gamma$ be a Schottky subgroup (see Definition \ref{dfn:schottky}) of $G$ such that $\Gamma\backslash G$ has infinite volume.
We remark that $\Gamma\backslash G$ has infinite volume if and only if the quotient graph $\Gamma\backslash T_{G}$ is an infinite graph \cite{Ser}.
Let $\Lambda\subset\partial T_{G}$ be the limit set of $\Gamma$ and 
$$S_{\Gamma}=\hull(\Lambda)\subset T_{G}$$
be the \emph{convex hull} of $\Lambda$ (see Section \ref{ss:sch}).

Additionally, we need the following subsets of $\zpo$:
let $\mup$ be the multiplicative group of ($p^{2}-1$)-th roots of unity in $\zpo$, and let
\begin{equation}\label{scc}
	\scc=\{x+\omg y\in 1+p\zpo:x^{2}-\omg^{2}y^{2}=1\}.
\end{equation}
Let hyperbolic elements $\gamma_{1},\gamma_{2},\cdots,\gamma_{n}\in G$ be generators of $\Gamma$.
Note that
$\Gamma$ consists of hyperbolic elements only.
Hence, for any $\gm\in\Gamma$, there exists a diagonal element 
\begin{equation}\label{asubtau}
\diag(p^{n_{\gm}}a_{\gm},1)\in G
\end{equation}
that is conjugate to $\gm$ with $n_{\gm}\in \z^{\xx}$ and $a_{\gm}\in\zpo^{\xx}$.
{
Here, $a_{\gamma}$ is decomposed as $r_{\gm}\Theta_{\gm}$ for some $r_{\gm}\in 1+p\zp$ and $\Theta\in \mup\xx\scc$ (see Section \ref{ss:polar}.)
Let $\cal{F}$ be a Schottky fundamental domain (see Definition \ref{dfn:schottky} and Proposition \ref{prop:Schottky}).
Define 
$$\cal{F}':=\{\bv\in S_{\Gamma}\cap\cal{F}: N_{1}(\bv)\not\subseteq\cal{F}\},$$
where $N_{1}(\bv)$ is the $1$-neighborhood of $\bv$ in $T_{G}$ (see (\ref{nbd})).

\begin{dfn} \label{def:acyl}
For a Schottky subgroup $\Gamma$,
we say that $\Gamma$ is \emph{highly-branched} if
\begin{enumerate}
\item
every vertex $\bv\in S_{\Gamma}$ satisfies
$
	\deg_{S_{\Gamma}}(\bv)\geq p^{2}-p+2
$
and for any $\bu,{\bu}'\in \cal{F}'$, there exists a vertex $\bw$ on the geodesic between $\bu$ and $\bu'$ such that 
$$\deg_{S_{\Gamma}}(\bw)\geq p^{2}-p+3.$$
%

\item there exists $\gm\in\Gamma$ such that the set
$
	\{\Theta_{\gm}^{n}:n\in\z_{\geq0}\}
$
is a dense subset of $\mup\xx\scc$.

\end{enumerate}
\end{dfn}
We will explain the conditions in Definition \ref{def:acyl} after the main theorems.

\subsection{Main theorems}\label{ss:main}
Identify the visual boundaries $\partial T_{G}$ and $\partial T_{H}$ with $\qpob:=\qpo\cup\{\infty\}$ and $\qpb:=\qp\cup\{\infty\}$, respectively. 
The groups $G$ and $H$ act transitively on $\qpob$ and $\qpb$ by Möbius transformations (see Section \ref{ss:bt}).
Let 
$$\cal{C}=\{C=g.\cl{\qp}:g\in G\}$$
be the set of circles equipped with the topology defined by the Hausdorff distance:
for any $C_{i}=g_{i}.\cl{\qp}\in \cal{C}$ with $g_{i}\in G$, where $i=1,2$,
$$
	d(C_{1},C_{2})=\max\left\{
	\sup_{y\in\cl{\qp}} d(C_{1},g_{2}.y),
	\sup_{x\in\cl{\qp}} d(g_{1}.x,C_{2})
	\right\}.
$$
In addition, let
$$
	\cal{C}_{\Lambda}=\{D\in\cal{C}:D\cap\Lambda\neq\varnothing\}.
$$
For every $C\in\cal{C}$, we let 
$\Gamma^{C}=\op{Stab}_{\Gamma}(C).$

Let $X:=\Gm\backslash T_{G}$, and $\pi:T_{G}\rightarrow X$ be the projection.
Define its \emph{convex core} as 
$$\core(X)=\Gamma\backslash S_{\Gamma}.$$
Since $\Gamma$ has no parabolic element, $X$ has no cusp.
It follows that $\Gamma$ is convex-cocompact, i.e. $\core(X)$ is compact.
Call each connected component in the non-compact part $\core(X)^{c}$ an $\emph{end}$ of $X$.
Note that this terminology is different from the end of Gromov hyperbolic spaces.
Each end is a tree, which is ($p^{2}+1$)-regular at all vertices except the vertex attached to the convex core.
\begin{thm} \label{thm:main}
Let $\Gamma$ be a convex cocompact highly-branched Schottky subgroup of $G$.
For $C\in\cal{C}$, one of the following holds:
\begin{enumerate}
    \item $C\cap \Lambda=\emptyset$, and $\Gamma C$ is discrete in ${\cal{C}}$; 
    \item $C\cap \Lambda=C$, $\Gamma^C$ is conjugate to a cocompact subgroup of $\PGLt(\qp)$, and $\Gamma C\subset {\cal{C}}$ is discrete; 
    \item $C\cap \Lambda \neq\varnothing$ nor $C$, $\Gamma^C$ is conjugate to a convex cocompact subgroup of $\PGLt(\qp)$, and $\Gamma C\subset {\cal{C}}$ is discrete; 
    \item $C\cap \Lambda \neq\varnothing$ nor $C$, and $\overline{\Gamma C}=\cal{C}_{\Lambda}.$
\end{enumerate}
\end{thm}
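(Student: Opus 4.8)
The plan is to translate the problem into the dynamics of $H$-subtrees and to collapse the four cases into a single closed-or-dense dichotomy. Since $H=\op{Stab}_G(\cl{\qp})$, the space $\cal{C}$ is $G$-equivariantly identified with $G/H$; a circle $C=g_{C}.\cl{\qp}$ corresponds to the $H$-subtree $g_{C}.T_{H}$, and $C\cap\Lambda$ records which boundary points of this subtree lie in the limit set. First I would record two elementary facts: $\Lambda$ is $\Gamma$-invariant, so $C\cap\Lambda\neq\varnothing$ forces $\gamma C\cap\Lambda\neq\varnothing$ for every $\gamma\in\Gamma$; and $\cal{C}_{\Lambda}$ is closed in $\cal{C}$, since if $D_{n}\to D$ with $x_{n}\in D_{n}\cap\Lambda$, a subsequence of the $x_{n}$ converges in the compact set $\Lambda$ to a point of $D\cap\Lambda$. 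Together these give $\overline{\Gamma C}\subseteq\cal{C}_{\Lambda}$ whenever $C\cap\Lambda\neq\varnothing$, so in cases (2)--(4) only the identification of $\overline{\Gamma C}$ from below remains.

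Next I would settle the three discrete cases by reading off $\Gamma^{C}$ from $C\cap\Lambda$ in the subtree picture. If $C\cap\Lambda=\varnothing$, then $\partial(g_{C}.T_{H})$ avoids the compact set $\Lambda$, so $g_{C}.T_{H}$ meets $S_{\Gamma}$ in a bounded set; properness of the $\Gamma$-action on $T_{G}$ and compactness of $\core(X)$ force $\Gamma C$ to be discrete, giving (1). If $C\subseteq\Lambda$, then $\hull(C)=g_{C}.T_{H}\subseteq\hull(\Lambda)=S_{\Gamma}$, and since $\Gamma\backslash S_{\Gamma}$ is compact the stabilizer $\Gamma^{C}$ acts cocompactly on $g_{C}.T_{H}$; hence $\Gamma^{C}$ is conjugate to a cocompact subgroup of $\PGLt(\qp)$ with discrete orbit, giving (2). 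When $C\cap\Lambda$ is a proper nonempty closed subset and $\Gamma C$ is discrete, a key lemma identifies $C\cap\Lambda$ with the limit set of $\Gamma^{C}$ on $g_{C}.T_{H}$; this is where high-branchedness enters as the tree analog of acylindricality, ensuring $\Gamma^{C}$ is relatively quasiconvex so that its limit set is exactly $C\cap\Lambda$, whence $\Gamma^{C}$ is convex cocompact in $H$, giving (3).

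The heart of the argument is to show that if $C\cap\Lambda\neq\varnothing$ and $\Gamma C$ is \emph{not} discrete, then $\overline{\Gamma C}=\cal{C}_{\Lambda}$; note that non-discreteness already excludes (1) and (2) by the above, so only (4) is possible. Set $Y=\overline{\Gamma C}\subseteq\cal{C}_{\Lambda}$, a closed $\Gamma$-invariant set, and pass to the dual picture of $H$-orbits in $\Gamma\backslash G$, where $Y$ lifts to a closed $H$-invariant set. Non-discreteness yields $\gamma_{n}C\to C'$ in $Y$ with $\gamma_{n}C\neq C'$, i.e. distinct subtrees that are Hausdorff-close; writing $C'=h_{n}\gamma_{n}C$ with $h_{n}\to e$ in $G$, I would run a \emph{unipotent blowup}, renormalizing $h_{n}$ by the geodesic flow and by elements of $\Gamma$ that recur into the thick part of $\core(X)$, magnifying the component of $h_{n}$ transverse to $g_{C}Hg_{C}^{-1}$ until, in the limit, $Y$ acquires invariance under a nontrivial horocyclic one-parameter subgroup moving $C$ to distinct nearby circles. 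The recurrence that makes this iteration possible is exactly what the degree bounds of Definition \ref{def:acyl}(1) provide: they guarantee that excursions of the subtree through the core meet enough branching to return, and that somewhere along the geodesic between two boundary vertices of the fundamental domain the extra branching $\deg_{S_{\Gamma}}\geq p^{2}-p+3$ yields a genuinely transverse translate rather than one tangent to $g_{C}.T_{H}$.

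From this single extra invariance I would bootstrap to full invariance: first upgrade to invariance of $Y$ under a whole horospherical subgroup, then incorporate the diagonal (geodesic) direction, and finally use Definition \ref{def:acyl}(2) --- density of $\{\Theta_{\gamma}^{n}:n\in\z_{\geq 0}\}$ in $\mup\xx\scc$ --- to fill the compact rotational directions distinguishing $\mathbb{K}$ from $\qp$; this makes $Y$ invariant under a subgroup large enough that, combined with minimality of the $\Gamma$-action on $\Lambda$, its orbits sweep out all of $\cal{C}_{\Lambda}$, giving $Y=\cal{C}_{\Lambda}$. The main obstacle is the blowup step itself in the non-Archimedean setting: unlike the real case, where polynomial divergence of unipotent flows and smooth return maps drive the renormalization, here divergence is governed discretely by $p$-adic valuations and the branching combinatorics of $T_{G}$, so one must control precisely how the horocyclic coordinate $s\in\qp$ interacts with the vertex degrees along the relevant geodesics and must ensure the limiting element is transverse to $H$ rather than lying in $\Gamma^{C}$. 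Verifying that high-branchedness and the density condition combine to force this transversality, and hence the minimality of $\cal{C}_{\Lambda}$, is where the substantive work lies.
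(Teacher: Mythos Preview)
Your overall architecture---closed/dense dichotomy, unipotent blowup for the non-closed case, identification of $C\cap\Lambda$ with the limit set of $\Gamma^{C}$ in the closed case---matches the paper. Two points, however, deserve correction or sharpening.

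First, a misattribution: in your treatment of case~(3) you say high-branchedness ``enters as the tree analog of acylindricality, ensuring $\Gamma^{C}$ is relatively quasiconvex.'' In the paper this step (Theorem~\ref{thm:discreteorbit}) uses \emph{only} the closedness of $\Gamma C$: closedness gives discreteness (Lemma~\ref{lem:closed_discrete}), hence local $\Gamma C$-finiteness (Lemma~\ref{lem:fin_lim_circles}), hence properness of the natural map $\Gamma^{C}\backslash g_{C}.T_{H}\to\Gamma\backslash T_{G}$, from which finite generation of $\Gamma^{C}$ and $C\cap\Lambda=\Lambda(\Gamma^{C})$ follow. High-branchedness is not used here; it enters only in case~(4), via $K$-thickness (Proposition~\ref{prop:k-thick}) and the infiniteness of $C\cap\Lambda$ (Proposition~\ref{prop:infsubset}).

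Second, and more substantively, your endgame for case~(4) diverges from the paper and contains a gap. You propose to ``bootstrap to full invariance'' of $Y=\overline{\Gamma C}$ under a horospherical subgroup, then the diagonal, then the rotational directions via condition~(2) of Definition~\ref{def:acyl}, and conclude by a minimality argument. The difficulty is that the blowup produces $\cal{V}_{\varepsilon}$-invariance only of a relatively $\cal{U}$-minimal subset $Y_{0}\subset Z=\overline{zH}$, not of $Z$; conjugating by $\cal{A}\subset H$ to enlarge $\cal{V}_{\varepsilon}$ to $\cal{V}$ takes you out of $Y_{0}$, and what survives (Corollary~\ref{cor:dense}) is a \emph{single} $\cal{V}$-orbit $z'\cal{V}\subset Z$, not $Z\cal{V}\subset Z$. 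From one $\cal{V}$-orbit there is no obvious route to invariance under a group acting transitively on $\cal{C}_{\Lambda}$. The paper avoids this by a different, more geometric mechanism: from $z'\cal{V}\subset Z$ one shows (Proposition~\ref{prop:goodcircle}) that for \emph{every} $\tau\in\Gamma$ the closure $\overline{\Gamma C}$ contains a circle through both fixed points $\tau^{\pm}$. Condition~(2) is then used not to gain group-invariance but pointwise: for a fixed $\tau$ with $\overline{\langle\Theta_{\tau}\rangle}=\mu_{p^{2}-1}\times\scc$, the powers $\tau^{m_{j}}$ rotate one circle through $\ax(\tau)$ onto any other such circle (Proposition~\ref{prop:denseorbit}, Case~1). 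An arbitrary target $D\in\cal{C}_{\Lambda}$ is then reached either directly (if $\Gamma D$ is not closed, via a common axis) or by first approximating $D$ by circles $C_{i}$ containing axes $\ax(\zeta_{i})$ with $\zeta_{i}\notin\Gamma^{C_{i}}$, which forces $\Gamma C_{i}$ non-closed by Lemma~\ref{lem:fin_lim_circles}(2), reducing to the previous case. This axis-based matching is the step your outline is missing; the group-invariance route you sketch does not obviously close.
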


Identifying $g.\qpb$ with $gH$ in $G/H$, 
we reformulate $\cal{C}_{\Lambda}$ as a subset of $G/H$ by
$$\cal{C}_{\Lambda}=\{g_{D}H:\partial(g_{D}.T_{H})\cap\Lambda\neq\varnothing\}.$$
Then, Theorem \ref{thm:main} classifies $H$-orbits in $\Gamma\backslash G$ by the duality between $\Gamma$-orbits in $G/H$ and $H$-orbits in $\Gamma\backslash G$ (see Section \ref{ss:bt}).

\begin{Cor}
If $C\cap\Lambda\neq\varnothing$, then $C$ is either closed or dense in $\cal{C}_{\Lambda}$.
\end{Cor}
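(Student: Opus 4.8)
The plan is to derive the corollary directly from Theorem~\ref{thm:main} by a simple case analysis, observing that it merely repackages the four cases of the theorem under the hypothesis $C\cap\Lambda\neq\varnothing$. First I would note that this hypothesis eliminates case (1), so exactly one of cases (2), (3), or (4) must hold for $C$. The goal is to show that in every one of these three cases, $\Gamma C$ is either closed or dense in $\cal{C}_\Lambda$, where by ``closed'' and ``dense'' I mean with respect to the subspace topology on $\cal{C}_\Lambda\subset\cal{C}$ induced by the Hausdorff metric $d$.

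Case (4) gives density immediately: there $\overline{\Gamma C}=\cal{C}_\Lambda$ by definition, so $\Gamma C$ is dense in $\cal{C}_\Lambda$. The work lies in cases (2) and (3), where the theorem asserts that $\Gamma C$ is discrete in $\cal{C}$. I would argue that discreteness of $\Gamma C$ in the ambient space $\cal{C}$, together with the fact that $\Gamma C\subseteq\cal{C}_\Lambda$ (which holds because $C\cap\Lambda\neq\varnothing$ is preserved under the $\Gamma$-action, as $\Gamma$ preserves $\Lambda$), forces $\Gamma C$ to be a closed subset of $\cal{C}_\Lambda$. Concretely, a discrete orbit has no accumulation points in $\cal{C}$; hence it has none in the subspace $\cal{C}_\Lambda$ either, so $\Gamma C$ equals its own closure relative to $\cal{C}_\Lambda$ and is therefore closed there.

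The one point requiring genuine care, and the step I expect to be the main obstacle, is verifying that $\Gamma C\subseteq\cal{C}_\Lambda$ so that closedness in the subspace is meaningful, and more subtly that discreteness in the full space $\cal{C}$ genuinely transfers to closedness in $\cal{C}_\Lambda$ rather than leaving room for accumulation points of $\Gamma C$ that lie in $\cal{C}_\Lambda\setminus\Gamma C$. The first inclusion is routine: for $\gamma\in\Gamma$ we have $\gamma(C\cap\Lambda)=(\gamma C)\cap(\gamma\Lambda)=(\gamma C)\cap\Lambda$, and since $C\cap\Lambda\neq\varnothing$ the image $\gamma C$ also meets $\Lambda$, so $\gamma C\in\cal{C}_\Lambda$. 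For the transfer, I would invoke the general topological fact that if a subset is discrete as a subspace and closed in the ambient space, its intersection with any subspace is closed in that subspace; since a discrete $\Gamma$-orbit in $\cal{C}$ is in fact closed in $\cal{C}$ (a discrete orbit of a group action, having no accumulation points, is closed), its restriction to $\cal{C}_\Lambda$ is closed in $\cal{C}_\Lambda$.

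Thus the corollary follows formally: under $C\cap\Lambda\neq\varnothing$, cases (2) and (3) yield closed orbits in $\cal{C}_\Lambda$, and case (4) yields a dense orbit, which exhausts the dichotomy claimed. I would present this as a short deduction rather than a standalone argument, emphasizing that no new dynamics are needed beyond Theorem~\ref{thm:main} and the observation that $\Gamma$ preserves the limit set $\Lambda$.
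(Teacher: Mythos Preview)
Your route goes through Theorem~\ref{thm:main}, whereas the paper's proof is a one-line invocation of Proposition~\ref{prop:denseorbit}: if $\Gamma C$ is not closed, then it is dense in $\cal{C}_\Lambda$. That proposition delivers the dichotomy immediately, with no case analysis.

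Your argument has a real gap at the step ``discrete $\Rightarrow$ closed.'' You justify it by writing that a discrete orbit ``has no accumulation points in $\cal{C}$,'' but this conflates two notions: a subset being discrete means every point is isolated \emph{within the subset}; it does not rule out accumulation points in the ambient space. The set $\{1/n:n\geq 1\}\subset\R$ is discrete but not closed. Theorem~\ref{thm:main} in cases (2) and (3) asserts only that $\Gamma C$ is discrete in $\cal{C}$, not that it is closed, so you cannot read closedness off from the statement alone. It is true that in those cases the orbit \emph{is} closed---indeed, the proof of Theorem~\ref{thm:main} proceeds by splitting on closed versus not closed and then deduces ``discrete'' from ``closed'' via Lemma~\ref{lem:closed_discrete}---but that information lives in the proof, not the statement you are invoking.

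If you try to patch the gap by an independent argument that discrete $\Gamma$-orbits in $\cal{C}_\Lambda$ are closed, the natural input is again Proposition~\ref{prop:denseorbit}: not closed implies dense in $\cal{C}_\Lambda$, and a dense subset of the perfect set $\cal{C}_\Lambda$ cannot be discrete. At that point you are using the paper's proof, just wrapped inside a longer detour through the case structure of Theorem~\ref{thm:main}.
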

\begin{proof}
By Proposition \ref{prop:denseorbit}, if $\Gamma C$ is not closed in $\cal{C}_{\Lambda}$, then $\Gamma C$ is dense in $\cal{C}_{\Lambda}$.
\end{proof}

Furthermore, we can interpret Theorem \ref{thm:main} for the quotient graph $X=\Gamma\backslash T_{G}$.
For any tree or graph $T$, we denote the set of its vertices by $\textbf{V}(T)$.
\begin{thm}\label{thm:sub}
Define a map $\iota_{g}:\textbf{V}(T_H)\rightarrow X$ by $\iota_{g}(\bv)=\pi(g.\bv)$ for every $g\in G$. Let $P=\cl{\iota_{g}(T_H)}$. 
Then either:
\begin{enumerate}
    \item $P$ is a regular subtree of $X$ of degree $p+1$ 
    in an end of $X$; or
    \item $P$ is a closed subgraph in $\core(X)$; or
    \item $P$ is an infinite graph of $X$ with $\core(P)=\core(X)$ ; or
    \item $P=X$.
\end{enumerate}
\end{thm}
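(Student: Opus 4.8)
The plan is to prove Theorem~\ref{thm:sub} by transferring the four-way dichotomy of Theorem~\ref{thm:main} from the space of circles $\cal{C}$ to the quotient graph $X=\Gamma\backslash T_G$ via the correspondence $C=g.\cl{\qp}\leftrightarrow g.T_H$ and the projection $\pi$. The key observation is that $\iota_g(T_H)=\pi(g.T_H)$ is exactly the image in $X$ of the $H$-subtree associated to the circle $C=g.\cl{\qp}$, so that $P=\cl{\iota_g(T_H)}$ is the projection of (the closure of) that subtree. The main point is then to show that the qualitative behavior of $P$ inside $X$ is governed by two pieces of data from Theorem~\ref{thm:main}: first, whether $C$ meets $\Lambda$ (equivalently, whether $g.T_H$ meets the convex hull $S_\Gamma$), and second, whether the $\Gamma$-orbit $\Gamma C$ is discrete or dense in $\cal{C}_\Lambda$.

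First I would set up the translation between $\Gamma^C=\stab_\Gamma(C)$ and the local structure of $P$. Since $P=\pi(g.T_H)=\Gamma^C\backslash(g.T_H)$ up to taking closures, the topological type of $P$ is determined by the geometry of this quotient. In case~(1), $C\cap\Lambda=\varnothing$ means $g.T_H$ lies outside $S_\Gamma$ and projects injectively (or with trivial stabilizer) into an end of $X$; because each end is $(p^2+1)$-regular away from the core and $T_H$ is $(p+1)$-regular, the image is a regular subtree of degree $p+1$ sitting in an end, giving conclusion~(1). In case~(2), $C=C\cap\Lambda$ forces $g.T_H\subset S_\Gamma$ with $\Gamma^C$ cocompact in $\PGLt(\qp)$, so $P=\Gamma^C\backslash(g.T_H)$ is a finite (hence closed) subgraph contained in $\core(X)$, giving conclusion~(2). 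Case~(3), where $\Gamma C$ is discrete but $C$ meets $\Lambda$ properly and $\Gamma^C$ is convex cocompact, yields a closed subgraph whose convex core is finite but which has infinite-volume regular ends; I would check that these ends coincide with those of $X$, so $\core(P)=\core(X)$, giving conclusion~(3). Finally, case~(4), $\cl{\Gamma C}=\cal{C}_\Lambda$, is where density must be pushed down: I would argue that the union $\bigcup_{\gamma\in\Gamma}\gamma g.T_H$ of all translates is dense in $S_\Gamma$ (since $\partial$ of these subtrees sweeps out a dense subset of $\Lambda$, and $S_\Gamma=\hull(\Lambda)$), whence its projection $P$ contains all of $X$; combined with $P\subseteq X$ this gives $P=X$, conclusion~(4).

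The hard part will be the density argument for case~(4): upgrading the density of $\Gamma C$ in the Hausdorff topology on $\cal{C}_\Lambda$ to the statement that $P=\cl{\iota_g(T_H)}$ is all of $X$, not merely a dense subgraph. The subtlety is that density of circles in Hausdorff distance controls the \emph{boundaries} $\partial(g.T_H)$, and I must translate this into density of the subtrees themselves inside $S_\Gamma$. The high-branchedness hypothesis (Definition~\ref{def:acyl}) should be what guarantees that every vertex of $S_\Gamma$ lies on some translate $\gamma g.T_H$ once the boundaries are dense, because the degree conditions force enough branching that an $H$-subtree passing near a vertex must actually pass through it. I would therefore leverage the degree bound $\deg_{S_\Gamma}(\bv)\geq p^2-p+2$ together with the fact that an $H$-subtree has degree $p+1$: a vertex $\bv\in S_\Gamma$ is covered precisely when two boundary points of $\Lambda$ on opposite sides of $\bv$ are realized by a single translated circle, and the density of $\Gamma C$ in $\cal{C}_\Lambda$ supplies such pairs. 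Once every vertex of $S_\Gamma$ is covered, taking the closure of the projection and using compactness of $\core(X)$ yields $P=X$.

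The remaining bookkeeping—verifying that the stabilizer descriptions in Theorem~\ref{thm:main}(2),(3) really produce a finite graph versus a graph with core equal to $\core(X)$, and confirming the regularity of the subtree in case~(1)—is routine once the correspondence $C\leftrightarrow g.T_H$ and the identification $P=\Gamma^C\backslash(g.T_H)$ (up to closure) are in place. I would organize the proof by simply invoking Theorem~\ref{thm:main} to split into its four cases and then, in each case, reading off the structure of $P$ from the nature of $\Gamma^C$ and the position of $C$ relative to $\Lambda$.
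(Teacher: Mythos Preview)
The paper does not actually supply a proof of Theorem~\ref{thm:sub}; it is stated immediately after Theorem~\ref{thm:main} with the remark that one ``can interpret Theorem~\ref{thm:main} for the quotient graph $X$'', and no further argument is given. Your overall strategy---run through the four cases of Theorem~\ref{thm:main} and read off the structure of $P$---is therefore exactly the intended one, and your treatment of cases~(2) and~(4) is essentially right (with one small fix noted below).

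There is, however, a genuine gap in your case~(3). You write that $\Gamma^C$ convex cocompact ``yields a closed subgraph whose convex core is finite but which has infinite-volume regular ends; I would check that these ends coincide with those of $X$, so $\core(P)=\core(X)$.'' The implication does not follow: knowing that the ends of $P$ sit inside ends of $X$ tells you nothing about whether $P$ covers all of $\core(X)$. What you must show is that every vertex $v\in S_\Gamma$ lies in some translate $\gamma g.T_H$, i.e.\ that $S_\Gamma\subset\bigcup_{\gamma}\gamma g.T_H$. This is not a consequence of $\Gamma^C$ being convex cocompact; the limit set $C\cap\Lambda$ of $\Gamma^C$ can be a proper subset of $\Lambda$, and the paper's own Theorem~\ref{thm:discreteorbit} only shows that the map $\Gamma^C\backslash g.T_H\to X$ is proper, not surjective onto $\core(X)$. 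You will need a separate argument here---likely using high-branchedness in the spirit of Proposition~\ref{prop:infsubset} to force every vertex of $S_\Gamma$ into some translate---or else a clarification of what $\core(P)$ is meant to denote.

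Two minor points. First, in case~(4) your argument gives $P\supseteq\core(X)$ but you need $P=X$; this is easy to repair since \emph{every} vertex $v\in T_G$ lies in some $g_D.T_H$ with $D\in\cal{C}_\Lambda$ (extend the ray $[v,\lambda)$ for any $\lambda\in\Lambda$ to a geodesic and take any $H$-subtree containing it), and then density of $\Gamma C$ in $\cal{C}_\Lambda$ plus local finiteness of the tree gives $v\in\gamma g.T_H$ for some $\gamma$. Second, your identification ``$P=\Gamma^C\backslash(g.T_H)$ up to closure'' is not quite correct: there is a natural map $\Gamma^C\backslash g.T_H\to X$ whose image is $\iota_g(T_H)$, but two vertices of $g.T_H$ can be $\Gamma$-equivalent without being $\Gamma^C$-equivalent, so the map need not be injective. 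This does not affect cases~(2) and~(4), but it does mean you cannot simply import the intrinsic structure of $\Gamma^C\backslash g.T_H$ as the structure of $P$.
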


McMullen, Mohammadi, and Oh classified $\PSLt(\R)$-orbits (hence, the classification of geodesic planes follows) in $\Gamma\backslash\PSLt(\c)$ when $\Gamma\subset\PSLt(\c)$ is rigid acylindrical Kleinian \cite{MMO} and general acylindrical Kleinian \cite{MMO2}.
The classification of $\PSLt(\R)$-orbits was obtained by purely topological arguments by using the minimality of the action of the complex unipotent subgroup (proved by Ferte or Winter \cite{Fe,Win})
on the entire space with finite volume \cite{Sha}, and on the convex core of the space with infinite volume \cite{MMO,MMO2}. 
On the other hand, Kwon proved the analog of Winter's result for Bruhat-Tits trees \cite{Kwon}. 
However, we need the WSG (weighted spectral gap) property for $\Gamma$ and a $\Gamma$-invariant potential function to apply the result of Kwon.
Since the WSG property does not imply Definition \ref{def:acyl}, we do not assume it for $\Gamma$.
We will use a different strategy to obtain the analog of Winter's result.
We first prove that the closure of a non-closed $\PGLt(\qp)$-orbit in $\Gamma\backslash G$ that meets $\core(X)$ contains an orbit of the unipotent subgroup of $\PGLt(\qpo)$.
Next, we topologically prove the minimality of such unipotent orbits on the set of all $\PGLt(\qp)$-orbits in $\Gamma\backslash G$ that meet $\core(X)$.
Through the duality between right $H$-orbits in $\Gamma\backslash G$ and left $\Gamma$-orbits in $G/H$, we use the convergence of $\Gamma$-orbits in $G/H$ (see Section \ref{ss:dense}).

We remark that the geometrically finite quotient of $\PSLt(\c)$ is proved by Benoist and Oh \cite{BO}. 
For the classification of geodesic planes outside the convex hull, 
we refer to a recent work by Torkaman and Zhang \cite{TZ}.

\noindent\begin{rmk}\label{rmk2}
{\rm
Theorem 1.5 of \cite{MMO} which corresponds to Theorem \ref{thm:main} contains 5 cases.
In the real hyperbolic space, 
the core of the quotient $\core(\Gamma\backslash\m{H}^{3})$ for
a Zariski dense convex cocompact acylindrical Kleinian subgroup $\Gamma$ is
a compact manifold with a boundary. 
In particular, $\core(\Gamma\backslash\m{H}^{3})$ has non-empty interior. 
Theorem 1.5 of \cite{MMO} includes the case when 
a geodesic plane fills an end of $M=\Gamma\backslash \m{H}^{3}$, which is the case (4) of Theorem 1.5 of \cite{MMO}.

In this regard, our definition of high-branchedness is somewhat stronger than the rigid acylindricality in the real case.
Case (4) of Theorem 1.5 of \cite{MMO}, where $|C\cap\Lambda|=1$ does not occur in our case as the intersection cannot be a finite set by Proposition \ref{prop:infsubset}.
}
\end{rmk}
\noindent\begin{rmk}\label{rmk1}
{\rm
Now let us explain the necessity of the conditions in Definition \ref{def:acyl}.
Condition (1) restricts the degree of vertices in the invariant subtree $S_{\Gamma}$.
We will use the unipotent blowup lemma analogous to that of \cite{MMO}.
Denote the subtree $g.T_{H}\cap S_{\Gamma}$ by $S_{g}$.
Let $\cal{U}$ be the $\qp$-unipotent subgroup of $G$.
To obtain the unipotent blowup lemma, we desire a certain recurrence property that the $(-\infty)$-point of a geodesic in $S_{g}$ returns infinitely many times into $\partial S_{g}$ by $\cal{U}$-action within a bounded time gap.
More precisely,

\begin{equationate}\label{recur}
\item if $\ell=(g_{\ell}.0,g_{\ell}.\infty)$ is a given geodesic in $S_{g}$ and $u_{i}\in\cal{U}$ such that $g_{\ell}u_{i}.0\in S_{g}$, then we want to show the existence of $u_{j}\in\cal{U}-\{u_{i}\}$ such that $g_{\ell}u_{j}.0\in S_{g}$ and
$$\abs{\log_{p}\norm{u_{i}-e}-\log_{p}\norm{u_{j}-e}}$$
 is bounded.
\end{equationate}
 
To ensure the recurrence property described in (1.3), $\partial g.T_{H}\cap\partial S_{\Gamma}$ must be infinite unless it is empty.
Suppose that there exists some constant $K$
such that for any vertex $\bv\in S_{g}$ with $\op{deg}_{S_{g}}(\bv)\geq 3$,
there exists $\bw\in S_{g}$ such that $\op{deg}_{S_{g}}(\bw)\geq 3$ and the number of edges between $\bv$ and $\bw$ is less than $K$.
The existence of such $K$ implies that
$\partial g.T_{H}\cap\partial S_{\Gamma}$ is infinite
(see Proposition \ref{prop:infsubset} for details).
Then, we can prove that for every $l\in\z$, there exists $x\in (p^{-(K+l)}\zp-p^{-l}\zp)$ so that $g_{\ell}u_{x}.0\in \partial S_{g}$, where $u_{x}$ is a unipotent element defined by $x$ (see Proposition \ref{prop:k-thick}).
%
%

In \cite{MMO}, the limit set of a convex cocompact acylindrical subgroup is a Sierpi\'nski carpet.
The limit set being a Sierpi\'nski carpet implies
an important property called $K$-thickness (see Definition \ref{def:k-thick} for  $K$-thickness in our setting), which ensures the recurrence property in (1.3).
However, a direct analog of Sierpi\'nski carpet in $\partial T_{G}$ is not obvious since $\partial T_{G}$ is fractal and totally discrete. 
Thus, instead of dealing with the boundary $\partial T_{G}$, we use the tree $T_{G}$ to give a condition, which is condition (1).

Without condition (1) in Definition \ref{def:acyl}, the boundary $\partial S_{g}$ may be a finite set.
If $\partial S_{g}$ is finite, we cannot ensure the unipotent blowup lemma.
We provided such non-example in Example \ref{ex:subgroup}.

	Condition (2) in Definition \ref{def:acyl} follows from Zariski density of $\Gamma$ in the real case.
	However, in our case, Zariski density does not ensure the density of
$
	\bigcup_{\gm\in\Gamma} \{\Theta_{\gm}^{n}\}
$
in $\mup\xx\scc$ (see Section \ref{ss:zar}).
In this regard, we need condition (2), a condition stronger than the Zariski density of $\Gamma$.
We will use this condition to prove Proposition \ref{prop:denseorbit}.
}\end{rmk}


\subsection{Acknowledgements}
We thank Hee Oh for suggesting the problem and helpful discussions.
We also thank Amir Mohammadi, Yves Benoist, Taehyeong Kim, Omri Solan, and Konstantin Andritch for their insightful discussions.
Jinho Jeoung and Seonhee Lim are supported by the National Research Foundation of Korea, project number NRF-2020R1A2C1A01011543.
 
\section{Preliminaries}
 In this section, we gather preliminary facts about $p$-adic dynamics, group actions on the Bruhat-Tits tree, which is a $p$-adic analog of a symmetric space, frames, axes of hyperbolic elements, and examples of convex cocompact highly-branched subgroups.
 
\subsection{$p$-adic field and its unramified quadratic extension}\label{ss:p-adic}
Let $\qp$ be the $p$-adic field for an odd prime $p$, i.e.,
$$
	\qp=\left\{\sum_{i=n}^{\infty} a_{i}p^{i}:a_{i}\in\mathbb{F}_{p},
	\, n\in\z \right\}.
$$
Its residue field is $\mathbb{F}_{p}\simeq\zp/p\zp$.
Let $\nu_{p}:\qp\rightarrow\z$ be the $p$-adic valuation of $\qp$ and $\pn{\cdot}$ be the $p$-adic norm.
If $a=\sum_{i=n}^{\infty} a_{i}p^{i}$, then
$$
	\pn{a}=p^{-\nu_{p}(a)}=p^{-n}.
$$
Using the $p$-adic norm, the ring of integers and the unit ring of integers of $\qp$ are characterized as follows.
\begin{align*}
	&\zp=\{a\in\qp:\pn{a}\leq1\},
	&\zp^{\xx}=\{a\in\qp:\pn{a}=1\}.
\end{align*}
As in the introduction, let $\m{K}$ be a degree $2$ unramified extension of $\qp$ and $\rok$ be the ring of integers of $\m{K}$.
We remark that an unramified extension of $\qp$ of each degree is unique (see Section 6.5 of \cite{Gou}).
Since $\m{K}$ is unramified, we can choose the uniformizer of $\m{K}$ to be $p$.
Hence we can use $\nu_{p}$ and $\pn{\cdot}$ on $\m{K}$ as well.
The residue field of $\m{K}$ is $\m{F}_{p^{2}}\simeq\rok/p\rok$. 
Moreover, there is a degree $2$ irreducible polynomial $f(t)\in\m{F}_p[t]$ and its root $\omega\notin\m{F}_p$ so that we can express
$$
	\m{K}=\qpo=\left\{\sum_{i=n}^{\infty} 
	(a_{i}+\omega b_{i})p^{i}:a_{i},b_{i}\in\zp/p\zp,
	\, n\in\z \right\},
$$
\begin{align*}
	\rok=\zpo=\{x\in\m{K}:\pn{x}\leq1\},
	\hspace{3pt}{\rm{and}}\hspace{5pt}
	\roku=\zpo^{\xx}=\{x\in\m{K}:\pn{x}=1\}.
\end{align*}
Here,
for $x=a+\omega b\in\qpo$,
$$
	\pn{x}=\max
	\{
	\left|a\right|_{p},
	\left|b\right|_{p}
	\}.
$$

\subsubsection{$p$-adic exponential and trigonometric functions}
In this subsection, we state properties of $p$-adic exponential and trigonometric functions (see \cite[Chapter II, Section 4]{VVZ} for more details).

Remark that $\m{F}_{p^{f}}$ contains $i=\sqrt{-1}$ if and only if $p^{f}\not\equiv 3 \text{(mod 4)}$ for $f\in\z_{\geq1}$.
For all odd prime $p$, we obtain $p^{2}\equiv 1\text{(mod 4)}$.
Since our residue field is $\m{F}_{p^{2}}$, we have $i\in\qpo$ for all $p$ by Hensel's lemma.

There are 2 cases for $\omg$.
The first case is $\omg=\al i$ for some $\al\in\qp$ so that $\omg i\in\qp$, and the second case is $\omg i\notin\qp$.
Let $\lambda=\omg_{p} i$, where $\omg_{p}=\omg$ if $\omg i\notin\qp$, and $\omg_{p}=1$ if $\omg =\al i$.
If we let $\lambda=\omg_{p} i$, then $\qpl=\qpo$, and $\zpl=\zpo$.
Moreover, there is an isomorphism between $\mup\cdot\scc$ defined on $\qpl$ and $\mup\cdot\scc$ defined on $\qpo$.
Thus, for computational convenience, let us use $\qpl$ instead of $\qpo$ until Subsection \ref{ss:polar}.

For $x\in p\zpl$, define
$$
e^{x}:=\sum_{n=0}^{\infty}\frac{x^{n}}{n!},
\hspace{10pt}
\sin x:=\sum_{n=0}^{\infty}(-1)^{n}\frac{x^{2n+1}}{(2n+1)!},
\hspace{10pt}
\cos x:=\sum_{n=0}^{\infty}(-1)^{n}\frac{x^{2n}}{(2n)!}.
$$
These series are convergent whenever $\pn{x}\leq p^{-1}$.
We have the following properties of $\cos x$ and $\sin x$ on $\qpl$:
\begin{align}\label{trigo}
\cos^{2}x+\sin^{2}x=1,
\hspace{20pt}
e^{ix}=\cos x+i\sin x.
\end{align}
Note that $\omg_{p}^{-1}\sin (\omg_{p} x)$ and $\cos (\omg_{p} x)-1$ are both in $p\zp$ when $x\in p\zp$.
The following property for $e^{x}$ is still valid on $\qpl$:
\begin{align}\label{exp}
e^{x+y}=e^{x}e^{y}.
\end{align}


\subsubsection{$p$-adic polar coordinates}\label{ss:polar}

Define $f:p\zp\rightarrow p\zp$ by $f(x)=\omg_{p}^{-1}\sin(\omg_{p} x)$.
Here, $\sin(\omg_{p} x)=\omg_{p} f(x)$ and $\arcsin (y)$ is well-defined on $p\zpl$ as a power series (see (4.23) of \cite{VVZ}).
It follows that $f(x)$ is bijective on $p\zp$.
Now consider the following equation:
\begin{equation}\label{eqn:trigo}
a^{2}-\lambda^{2}f(x)^{2}=a^{2}+\sin^{2}(\omg_{p} x)=1.
\end{equation}
%
%
Since $\cos(\omg_{p} x)\in 1+p\zp$ and $-\cos(\omg_{p} x)\notin 1+p\zp$,
we obtain
$a=\cos(\omg_{p} x)$ by (\ref{trigo}).
Moreover,
since $f(x)$ is bijective on $p\zp$, any $a+\lambda b\in 1+p\zpl$ satisfying $a^{2}-\lambda^{2}b^{2}=1$ 
is expressed by
$$
a+\lambda b=a+\lambda f(x)=\cos(\omg_{p} x)+i\sin(\omg_{p} x)=e^{i\omg_{p} x}
$$
for some $x\in p\zp$.
Thus $\scc$ in (\ref{scc}) is reformulated by
\begin{equation}\label{scceq}
	\scc=\{ e^{i\omg_{p} \theta}: \theta\in p\zp\}.
\end{equation}

\begin{Lem}\label{lem:important}
Let $e^{i\omg_{p} \theta}\in\scc$, where $\theta=p\cdot\sum_{i=0}^{\infty}{x_{i}}p^{i}\in p\zp$.
Then $\langle e^{i\omg_{p}\theta}\rangle$ is dense in $\scc$ if and only if $x_{0}\neq 0$.
\end{Lem}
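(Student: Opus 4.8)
The plan is to realize $\scc$ as the isometric image of the compact additive group $p\zp$ under the $p$-adic exponential, and then reduce the density statement to the elementary computation of the closure of a cyclic subgroup of $\zp$.

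First I would record that the map $\phi : p\zp \to \scc$ given by $\phi(\theta) = e^{i\omg_{p}\theta}$ is an isomorphism of topological groups, in fact an isometry. It is surjective by the reformulation (\ref{scceq}), and it is a homomorphism from $(p\zp,+)$ to $(\scc,\cdot)$ by the addition formula (\ref{exp}), which gives $\phi(\theta_{1}+\theta_{2}) = \phi(\theta_{1})\phi(\theta_{2})$. Since $i\omg_{p}\in\zpl^{\xx}$, the map $\theta\mapsto i\omg_{p}\theta$ is an isometry sending $p\zp$ into the convergence domain $p\zpl$; combined with the standard fact that, for odd $p$, the exponential satisfies $\pn{e^{x}-e^{y}}=\pn{x-y}$ on $p\zpl$ (equivalently $\pn{e^{x}-1}=\pn{x}$), this shows $\pn{\phi(\theta)-\phi(\theta')}=\pn{\theta-\theta'}$. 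Hence $\phi$ is an isometric bijection, so $\phi(\overline{A})=\overline{\phi(A)}$ for every $A\subseteq p\zp$, and a subset is dense in $p\zp$ if and only if its image is dense in $\scc$.

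Next I would transport the problem through $\phi$. Writing $g=e^{i\omg_{p}\theta}=\phi(\theta)$, the cyclic group $\langle g\rangle=\{g^{n}:n\in\z\}$ equals $\phi(\z\theta)$, where $\z\theta=\{n\theta:n\in\z\}\subseteq p\zp$; the same image is obtained for the semigroup $\{g^{n}:n\geq 0\}$ used in Definition \ref{def:acyl}, since $\z_{\geq 0}$ is already dense in $\zp$. Therefore $\langle g\rangle$ is dense in $\scc$ if and only if $\overline{\z\theta}=p\zp$. To evaluate $\overline{\z\theta}$, note that $\z$ is dense in $\zp$, multiplication by $\theta$ is continuous, and $\theta\zp=p^{\nu_{p}(\theta)}\zp$ is compact, hence closed; so $\overline{\z\theta}=\theta\zp=p^{\nu_{p}(\theta)}\zp$. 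This equals $p\zp$ exactly when $\nu_{p}(\theta)=1$. Finally, for $\theta=p\cdot\sum_{i=0}^{\infty}x_{i}p^{i}$ we have $\nu_{p}(\theta)=1+\nu_{p}\!\left(\sum_{i}x_{i}p^{i}\right)$, and $\nu_{p}\!\left(\sum_{i}x_{i}p^{i}\right)=0$ precisely when $x_{0}\neq 0$; thus $\nu_{p}(\theta)=1$ if and only if $x_{0}\neq 0$, which is the claimed equivalence.

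The only delicate point is the first paragraph: one must confirm that $\phi$ is genuinely a bijective isometry, which rests on the good behavior of the $p$-adic exponential for odd $p$ (injectivity and the norm identity $\pn{e^{x}-1}=\pn{x}$) together with the fact that $i\omg_{p}$ is a unit, so that $i\omg_{p}\cdot p\zp$ lands in the domain of convergence. Once $\phi$ is identified as a topological isomorphism, the remainder is just the structure of closed subgroups of $\zp$ and a digit computation, both routine.
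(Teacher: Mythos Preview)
Your proof is correct and follows essentially the same route as the paper: both transport the question through the exponential homomorphism $\theta\mapsto e^{i\omega_{p}\theta}$ and reduce to the elementary fact that $\{nz\}_{n\geq 1}$ is dense in $\zp$ if and only if $z\in\zp^{\times}$. You spell out the isometry property to justify that density is preserved in both directions, whereas the paper relies on the bijectivity of $\phi$ already established in the preceding paragraph (via the bijectivity of $f(x)=\omega_{p}^{-1}\sin(\omega_{p}x)$ on $p\zp$); otherwise the arguments are the same.
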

\begin{proof}
For $z\in\zp$, 
the set $\{nz\}_{n=1}^{\infty}$ is dense in $\zp$ if and only if $z\in\zp^{\xx}$ since $\z_{\geq1}$ is dense in $\zp$.
Since
$\langle e^{i\omg_{p}\theta} \rangle = \{e^{i\omg_{p}(n\theta)}:n\in\z_{\geq1}\}$ by (\ref{exp}),
the claim follows.
\end{proof}

$
	\qp^{1}=\{p^{n}(1+y):n\in\z, y\in p\zp\}.
$
From Theorem 1 in \cite{Ko}, we have $\qpl^{\xx}\cong\qp^{1}\xx\mup\xx\scc$ as multiplicative topological groups.
Thus, we will use $\qp^{1}\cdot\mu_{p^{2-1}}\cdot\scc$ rather than the product space.
Now if
$\theta=p\cdot\sum_{i=0}^{\infty}{x_{i}}p^{i}\in p\zp$ with $x_{0}\neq0$ and
$\zeta$ is a generator of $\mup$, then $\cl{\langle \zeta e^{i\omg_{p} \theta}\rangle}=\mup\cdot \scc$.
We can interpret $\langle \zeta e^{i\omg_{p} x} \rangle$ as an analog of an irrational rotation.

\subsubsection{Zariski dense subgroup of $G$}\label{ss:zar}

In this subsection, we explain why Zariski density condition on $\Gamma$ does not ensure high-branchedness of $\Gamma$ in Definition \ref{def:acyl} (2).
In the beginning, we deal with $G'=\GLt(\qpo)$ instead of $G=\PGLt(\qpo)$ to find an example in $G$.

We consider $G$ and $G'$ as $\qp$-Lie groups (cf. $\op{SL}_{2}(\c)$ and $\PSLt(\c)$ as real Lie groups).
A subgroup $\Gamma$ of $G$ (resp. $G'$) is \emph{Zariski dense} if there is no proper $\qp$-algebraic subvariety in $G$ (resp. $G'$) which contains $\Gamma$.

Let $\Gamma'$ be a Schottky subgroup of $G'$.
Let $\diag(p^{n_{1}}a_{1},p^{n_{2}}a_{2})$ 
be conjugate to a hyperbolic element $\gamma\in\Gm'$ with $a_{j}\in\zpo^{\xx}$ for $j=1,2$.
Decompose 
$$a_{j}=r_{j}\zeta_{j} e^{i\omg \theta_{j}}=r_{j}\Theta_{j}
$$ for some $r_{j}\in 1+p\zp$, $\zeta_{j}\in\mup$, and $\theta_{j}\in p\zp$.
Let 
$$\Omega(\Gamma')=
\{\Theta_{1}\Theta_{2}^{-1} \in \mup\cdot \scc:
\diag(p^{n_{1}}\Theta_{1}, p^{n_{2}}\Theta_{2})\sim\gamma\in\Gamma' \}.$$
By definition, 
for the projectivized group $\Gm\subset\PGLt(\qpo)$ of $\Gm'$,
we have
$\Omega(\Gamma)=\Omega(\Gamma')$.
We want $\Omega(\Gamma)$ to be dense in $\mup\cdot\scc$.
However, we can show the existence of $\Gamma'\subset G'$ being Zariski dense but $\Omega(\Gm')$ is far from dense.
Then, by projectivizing $\Gm'$ to $G$, we have a Zariski dense subgroup $\Gm$ such that $\Omega(\Gm)$ is not dense.

Before we state the Proposition, 
define a projection $\pi_{\theta}:\Gm'\rightarrow p\zp$ by $\pi_{\theta}(\gm)=\theta_{1}-\theta_{2}$,
where $\gm\in\Gm'$ is defined as the above.

\begin{prop}
There exists Zariski dense Schottky group $\Gamma$ in $G$ such that $\Omega(\Gamma)$ is not dense in $\mup\cdot\scc$.
\end{prop}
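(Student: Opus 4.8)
The plan is to realize the example as a small deformation, in the $\omega$-direction, of a Schottky subgroup of the $\qp$-points. By the remark preceding the proposition $\Omega(\Gamma)=\Omega(\Gamma')$, and since the projection $G'=\GLt(\qpo)\to G=\PGLt(\qpo)$ is a dominant morphism of $\qp$-groups (so carries Zariski dense subgroups to Zariski dense subgroups), it suffices to produce a Zariski dense Schottky subgroup $\Gamma'$ of $\SLt(\qpo)\subset G'$ with $\Omega(\Gamma')$ not dense in $\mup\cdot\scc$. First I would fix a Schottky group $\Delta=\langle\delta_1,\delta_2\rangle\subset\SLt(\qp)$ generated by two hyperbolic elements in ping-pong position; being free and non-elementary it is automatically Zariski dense in $\SLt(\qp)$. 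I then deform the generators by setting $\gamma_i:=\delta_i\exp(p^{K}\omega X_i)$ for nonzero $X_i\in\mathfrak{sl}_{2}(\qp)$ and a large integer $K$, and put $\Gamma':=\langle\gamma_1,\gamma_2\rangle$.

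For $K$ large the perturbation $\exp(p^{K}\omega X_i)$ is $p$-adically close to the identity, so the ping-pong dynamics of $\Delta$ persist: $\Gamma'$ is again Schottky, free and purely hyperbolic, and each reduced word $\gamma_w$ has the same translation length $2m_w$ on $T_G$ as the corresponding $\delta_w\in\Delta$. For Zariski density I would pass to $\qpo$, over which $\op{Res}_{\qpo/\qp}\SLt$ becomes $\SLt\times\SLt$ via $g\mapsto(g,\sigma(g))$, where $\sigma$ is the nontrivial Galois automorphism. Each coordinate projection of $\Gamma'$ is a non-elementary free group, hence Zariski dense in $\SLt$; by Goursat's lemma the Zariski closure is either all of $\SLt\times\SLt$ or the graph of an (inner) automorphism. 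The latter would force a relation $\sigma(\gamma_i)=h\gamma_ih^{-1}$ for a common $h$; letting $K\to\infty$ and comparing leading terms gives $\delta_i=h\delta_ih^{-1}$, so $h$ centralizes $\Delta$ and hence $h=\pm I$, which then forces $X_i=0$. Thus for nonzero $X_i$ the closure is everything, $\Gamma'$ is Zariski dense in $\SLt(\qpo)$, and its image $\Gamma$ is Zariski dense in $G$.

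The heart of the argument is that $\Omega(\Gamma')$ avoids most of $\mup$. For a hyperbolic $\gamma\in\SLt(\qpo)$ with eigenvalue $\lambda=p^{m}\mu$, $\mu\in\zpo^{\times}$, the $\mup$-component $\zeta_\gamma$ of $\Theta_\gamma$ is the Teichm\"uller representative of $(\mu\bmod p)^{2}$, while the unit part of $\op{tr}(\gamma)=p^{m}\mu+p^{-m}\mu^{-1}$ has residue $(\mu\bmod p)^{-1}$; hence $\zeta_\gamma\in\fp^{\times}$ as soon as the leading coefficient of $\op{tr}(\gamma)$ lies in $\qp$. I would now show this holds for every $\gamma=\gamma_w\in\Gamma'$. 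Writing $\op{tr}(\gamma_w)=A_w+\omega B_w$ with $A_w,B_w\in\qp$, one has $(\omega-\sigma(\omega))B_w=\op{tr}(\gamma_w)-\op{tr}(\sigma(\gamma_w))$, where $\omega-\sigma(\omega)$ is a unit; using $\sigma(\gamma_i)=\delta_i\exp(p^{K}\sigma(\omega)X_i)$, the telescoping identity for $\gamma_w-\sigma(\gamma_w)$ expresses this difference as a sum of traces of words of the form (Schottky prefix)$\,(\gamma_{i}-\sigma(\gamma_i))\,$(Schottky suffix). Since $\gamma_i-\sigma(\gamma_i)=O(p^{K})$, and since a Schottky word of translation length $m$ has matrix entries of valuation $\geq -m-C'$ while the prefix and suffix translation lengths add to at most $m_w+C''$, every term has valuation $\geq K-m_w-C$ for a uniform constant $C$; as the $p$-adic valuation of a sum is the minimum of the valuations, $\nu_p(B_w)\geq K-m_w-C$ with no dependence on the word length. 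Because $\nu_p(\op{tr}(\gamma_w))=-m_w$, choosing $K>C$ makes the $\omega$-part strictly smaller than the $\qp$-part, so the leading coefficient of $\op{tr}(\gamma_w)$ lies in $\qp$ and $\zeta_{\gamma_w}\in\fp^{\times}\subset\mu_{p-1}$. Consequently $\Omega(\Gamma')\subset\mu_{p-1}\cdot\scc$, a proper closed subset of $\mup\cdot\scc$, which is therefore not dense.

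The main obstacle is exactly this uniform estimate on $B_w$: controlling the $\omega$-component of $\op{tr}(\gamma_w)$ simultaneously for all words requires the two Schottky-geometric inputs — the entry-valuation bound $\geq -m-C'$ for a word of translation length $m$, and the near-additivity $m_{\mathrm{prefix}}+m_{\mathrm{suffix}}\leq m_w+C''$ — together with the non-archimedean cancellation that prevents the number of summands from degrading the valuation. Verifying these bounds (from the ping-pong separation of the Schottky domains and the boundedness of the fixed points of $\Gamma'$ in $\partial T_G$) is where the real work lies; the Zariski-density and Schottky-persistence steps are comparatively routine.
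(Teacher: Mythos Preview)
Your approach is sound and genuinely different from the paper's. The paper constructs $\Gamma'$ by directly prescribing eigenvalue data: it takes two hyperbolic elements conjugate to diagonal matrices whose angular parts satisfy $\theta_j,\theta'_j\in p^{k}\zp$ with $k>1$, and then argues that $\pi_\theta(\Gamma')\subset p^{k}\zp$, so the $\scc$-component of $\Omega(\Gamma')$ is trapped in a proper closed subgroup of $\scc$; Zariski density is checked via the classification of maximal closed subgroups of $\GLt$ (conjugates of the diagonal torus or of the Borel). You instead realize $\Gamma'$ as an $O(p^{K}\omega)$-perturbation of a $\qp$-Schottky group and constrain the \emph{finite} factor: the leading coefficient of every trace stays in $\qp$, forcing the $\mup$-component of each $\Theta_\gamma$ into $\mu_{p-1}$. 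The two proofs therefore obstruct density in different factors of $\mup\cdot\scc$, and your construction has the pleasant feature of producing a whole family of examples (vary $\Delta$, the $X_i$, and $K$). The paper's route is shorter; yours is more geometric and makes the link to the underlying $\qp$-structure transparent.

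Two small points. Your Zariski-density step as written is not quite rigorous: ``letting $K\to\infty$'' does not apply to a single fixed $\Gamma'$, and the conjugator $h$ could a priori drift with $K$. A clean repair is to choose $X_i$ with $\op{tr}(\delta_iX_i)\neq0$; then $\op{tr}(\gamma_i)=\op{tr}(\delta_i)+p^{K}\omega\,\op{tr}(\delta_iX_i)+O(p^{2K})\notin\qp$, so $\gamma_i$ and $\sigma(\gamma_i)$ have distinct traces and cannot be $\SL_2$-conjugate, ruling out the Goursat graph case (and the $\PSL_2$-graph case, since $\op{tr}(\gamma_i)^2\notin\qp$ either). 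As for the ``main obstacle'' you flag, the needed inputs do follow from ping-pong: every axis of an element of $\Gamma'$ meets the Schottky fundamental domain $\cal{F}$, giving $|\ell(\gamma)-d(o,\gamma.o)|\leq 2\,\diam(\cal{F}\cap S_{\Gamma'})$ for $o\in\cal{F}\cap S_{\Gamma'}$; and for a reduced splitting $w=uv$ the median of $(o,u.o,w.o)$ lies in $u.\cal{F}$, whence $d(o,u.o)+d(o,v.o)\leq d(o,w.o)+2\,\diam(\cal{F}\cap S_{\Gamma'})$. These combine to give both the near-additivity $\ell(u)+\ell(v)\leq\ell(w)+C''$ and the entry-valuation bound $\nu_p(g_{ij})\geq -\tfrac12\ell(g)-C'$ that your trace estimate needs.
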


\begin{proof}
We start with the following statement in
\cite[Section 30.4]{Hum}:
let $S$ be a maximal proper (Zariski) closed subgroup of $G'$.
Then, either the identity component $S^{o}$ of $S$ is reductive or $S$ is parabolic.

In our case, reductive subgroups are conjugates of the diagonal subgroup and parabolic subgroups are conjugates of the upper triangular subgroup.
Now for $j=1,2$, define elements $\theta_{j}$ and $\theta'_{j}$ of $p\zp$ by $\theta_{j}=p^{k_{j}}\cdot\sum_{n=0}^{\infty}x_{j,n}p^{n}$ and
$\theta'_{j}=p^{k'_{j}}\cdot\sum_{n=0}^{\infty}x'_{j,n}p^{n}$,
where $k_{j},k'_{j}\in\z_{>1}$.
In addition, choose $\zeta_{j},\zeta'_{j}$ as generators of $\mup$, and $n_{j},n'_{j}\in\z$.
Let $\Theta_{j}=\zeta_{j}e^{i\omg\theta_{j}}$, 
$\Theta'_{j}=\zeta'_{j}e^{i\omg\theta'_{j}}$, and
\begin{align*}
\gamma=g\diag(p^{n_{1}}\Theta_{1},
p^{n_{2}}\Theta_{2})g^{-1}, 
{\text{ and }}
\gamma'=g'
\diag(p^{n'_{1}}\Theta'_{1},p^{n'_{2}}\Theta'_{2})g'^{-1},
\end{align*}
where $g.\infty,g'.\infty,g.0$, and $g'.0$ are all mutually distinct.
Let $\Gm'=\langle\gm,\gm'\rangle$.
Note that $\Omega(\Gm')$ is infinite since 
$\pi_{\theta}(\gm^{n})=n\pi_{\theta}(\gm)=n(\theta_{1}-\theta_{2})$ for $n\in\z$, thus, by the proof of Lemma \ref{lem:important}.
Observe that 
$$\det(\gamma\gamma')=p^{n_{1}+n_{2}+n'_{1}+n'_{2}}(\zeta_{1}\zeta_{2}\zeta'_{1}\zeta'_{2})e^{i\omg(\theta_{1}+\theta_{2}+\theta'_{1}+\theta'_{2})}.$$
Thus, $\pi_{\theta}(\gm\gm')$ is an element of $p^{\min\{k_{j},k'_{j}\}}\zp$.
It follows that $\pi_{\theta}(\Gm')\subset p^{\min\{k_{j},k'_{j}\}}\zp$.
Hence, $\Omega(\Gm')$ cannot be dense in $\mup\cdot\scc$ as $\min\{k_{j},k'_{j}\}>1$.

We now claim that $\Gm'$ is Zariski dense in $G'$. 
First, since $\Gamma'$ consists of hyperbolic elements, it is not conjugate to the parabolic subgroup.
Now note that $\langle \diag(p^{n_{1}}\Theta_{1},
p^{n_{2}}\Theta_{2})\rangle$ is Zariski dense in the diagonal subgroup of $G'$ by our choice of $\zeta_{j}$, and $\theta_{j}$ for $j=1,2$.
Hence, the Zariski closure of $\Gamma'$ contains a conjugate of the diagonal subgroup.
Moreover, since $g_{1}\neq g_{2}$, the subgroup $\Gamma'$ cannot be conjugate to a subgroup of the diagonal subgroup.
Thus, by maximality of the diagonal subgroup, the Zariski closure of $\Gamma'$ is $G'$.
Then, the projectivized group $\Gm\subset G$ of $\Gamma'$ is indeed a Zariski dense subgroup of $G$ with $\Omega(\Gm)$ not dense in $\mup\cdot\scc$.
\end{proof}

Remark that in contrast to our case,
for a Zariski dense Kleinian subgroup $\Gamma$, the analog of $\Omega$ is dense in $\m{S}^{1}$ (see \cite[Proposition 4.4]{MMO}).

\begin{figure}[h!]
  \includegraphics[width=0.5\linewidth]{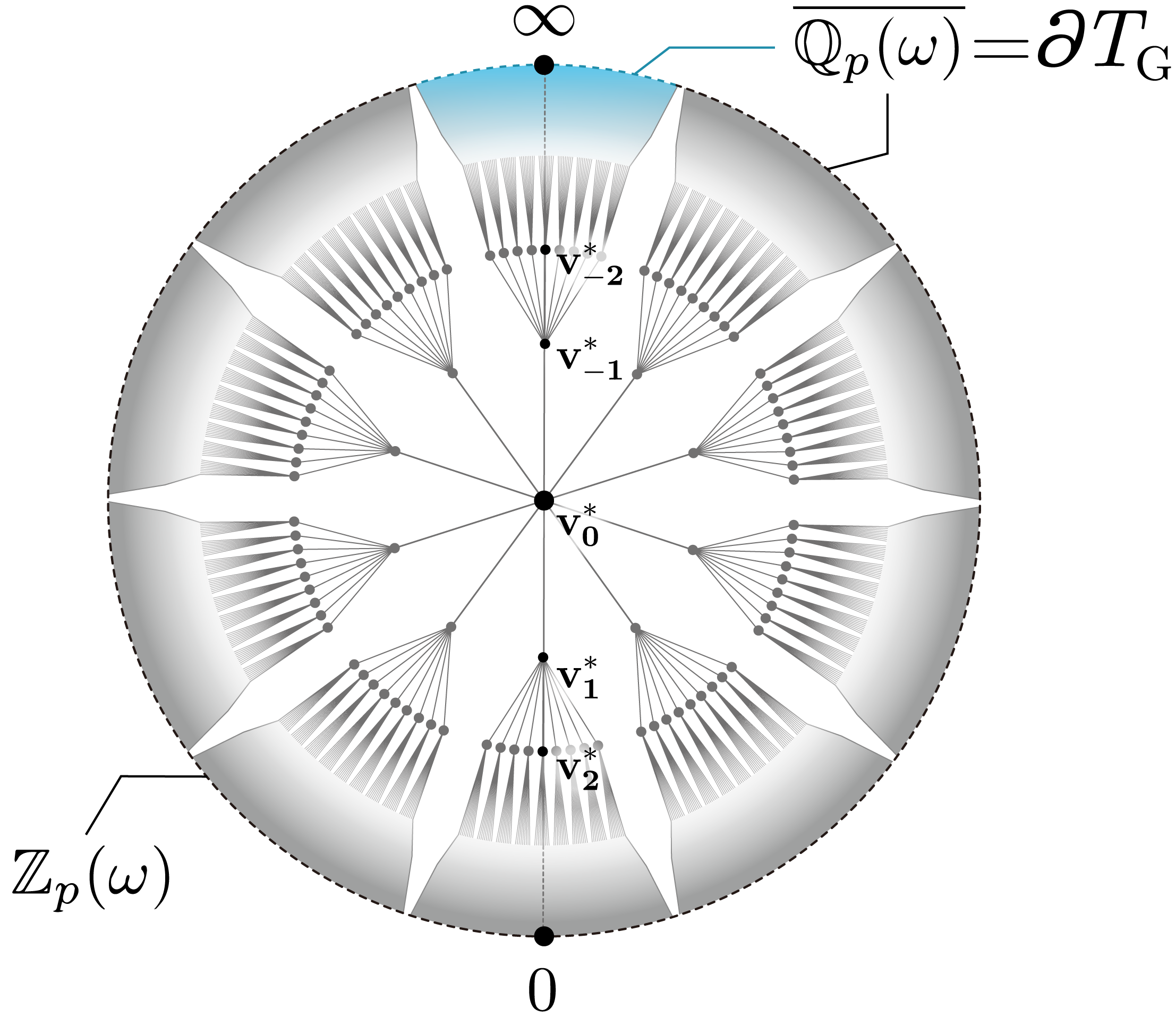}
  \caption{The Bruhat-Tits tree of $T_{G}$ of $G$ when $p=3$ and 
  			$\omega=\sqrt{2}$.
We express the subset $\zpo$ of $\partial T_{G}$ by the blue dashed line.
The blue and black dashed lines represent the entire boundary of $T_{G}$.}
  \label{fig:bt1}
\end{figure}
 
\subsection{The Bruhat-Tits tree}\label{ss:bt}
For detailed references, we refer to \cite{Ser}, \cite{BPP},  and \cite{KP}.
	Let $\m{L}$ be either $\qpo$ or $\qp$, and $\rol$ be its ring of integers.
Let $\ggl$ be the projective general linear group $\PGLt(\m{L})$.
Then $\ggl$ acts transitively on its Bruhat-Tits tree. 
To define the Bruhat-Tits tree $T_{\ggl}$ of $\ggl$, define the set of vertices of 
$T_{\ggl}$ by ${\bf{V}}(T_{\ggl})=\PGLt(\m{L})/\PGLt(\rol)$. 
Define the reference geodesic from $0$ to $\infty$ to be the sequence of vertices $\{\bvv_{j}\}_{j=-\infty}^{\infty}$, where
$$
	\bvv_{j}=\begin{bmatrix} p^{j} & 0 \\ 0 & 1 \end{bmatrix}
	=\begin{pmatrix} p^{j} & 0 \\ 0 & 1 \end{pmatrix}.\PGLt(\cal{O}_{\m{L}})
	\in{\bf{V}}(T_{\ggl}).
$$
Let $\bvv_{0}$ be the origin of $T_{\m{G}(\m{L})}$.
On the other hand, we can consider ${\bf{V}}(T_{\ggl})$ as the space of lattices of $\m{L}^{2}$.
Lattices $\bv$ and $\bw$ in ${\bf{V}}(T_{\ggl})$ are adjacent if and only if either $p\bv\subsetneq \bw\subsetneq \bv$ or $p\bw\subsetneq \bv\subsetneq \bw$ as sets.
For example,
{\small
$
	\bw=\begin{bmatrix} p & a \\ 0 & 1 \end{bmatrix}
$
}
is an adjacent vertex of $\bvv_{0}$ for any $a\in\rol/p\rol$. 
Let ${\bf{E}}(T_{\ggl})$ be the set of (unoriented) edges of $T_{\ggl}$. 
Edges are defined by pairs of two adjacent vertices. 
The graph $T_{\ggl}=(\textbf{V}(T_{\ggl}),\textbf{E}(T_{\ggl}))$ is indeed a tree.

The visual boundary of $T_{\ggl}$ is identified with $\m{P}^{1}(\m{L})=\cl{\m{L}}$.
Let $\infty\in\cl{\m{L}}$ be
{\small
$\dis
\lim_{j\rightarrow\infty}
\begin{bmatrix}
	p^{-j}&0\\0&1
\end{bmatrix}
=\infty
$}.
Let $\al=\sum_{i=m}^{\infty}a_{i}p^{i}\in\m{L}$.
For $n\geq m$, define $\al_{n}=\sum_{i=m}^{n}a_{i}p^{i}$
and
\begin{equation}\label{vertexname}
	\bv(\al_{n})=
	\begin{bmatrix} p^{n+1} & \al_{n} \\ 0 & 1 \end{bmatrix}.
\end{equation}
Then, $\al\in\m{L}$ is identified with a sequence of vertices, 
$$\{\cdots,\bv_{m-1}^{*},\bv_{m}^{*},\bv(\al_{m}),\bv(\al_{m+1}),\cdots\},$$
which is a bi-infinite geodesic from $\infty$ to $\al$.
If $\al=0$, then let $m=-\infty$.


Now, let $G=\PGLt(\qpo)$ and $H=\PGLt(\qp)$.
If we let $\bv=g_{\bv}.\PGLt(\zpo)=[g_{\bv}]$ for some $g_{\bv}\in G$, 
we define the $G$-action on $\vtxg$ by
$$
	g.\bv=g.[g_{\bv}]=gg_{\bv}.\PGLt({\zpo})=[gg_{\bv}]
$$
for all 
{\small $g=
\begin{pmatrix}
	a&b\\c&d
\end{pmatrix}
\in G$}.
Considering $\bv(\al_{n})$ as a lattice in $\qpo^{2}$, the above action is expressed by
$$
g.\left(
\zpo\binom{p^{n+1}}{0}+\zpo\binom{\al_{n}}{1}
\right)
=
\zpo\binom{p^{n'}}{0}+\zpo\binom{\frac{a\al_{n}+b}{c\al_{n}+d}}{1}
$$
for some $n'\in\z$.
Hence, 
$$
g.\al=
\{\cdots,g.\bv_{m-1}^{*},g.\bv_{m}^{*},g.\bv(\al_{m}),g.\bv(\al_{m+1}),\cdots\}
=\frac{a\al+b}{c\al+d},
$$
so that the action of $G$ on $\partial T_{G}$ coincides the action of $G$ on $\cl{\qpo}$, which is the Möbius transformation.

The distance $d(\bv,\bw)$ on the tree $T_{G}$ is given by the number of edges between vertices $\bv$ and $\bw$.
For any $\bv,\bw\in{\bf{V}}(T_{G})$ and $\al\in\cl{\qpo}$, denote the \emph{geodesic ray} from $\bv$ to $\al$ by $[\bv,\al)$, 
and the geodesic segment from $\bv$ to $\bw$ by  $[\bv,\bw]$.
The $n$-neighborhood of a vertex $\bv$ in $T_{G}$ is defined by
\begin{equation}\label{nbd}
	N_{n}(\bv)=\{\bw\in\vtxg: d(\bv,\bw)\leq n\}.
\end{equation}
For each $\bv\in{\bf{V}}(T_{G})$, adjacent vertices of $\bv$ are important to us
as they give a direction at $\bv$.
Fix
$a=\sum_{i=n-k}^{n-1}a_{i}p^{i}$ for $a_{i}\in\zpo/p\zpo$, $k\geq 1$, and let
$
	\bv(a)
$
be as in (\ref{vertexname}).
Then there are $(p^{2}+1)$-adjacent vertices, 
$\bv({a+bp^{n}})$ for $b\in\zpo/p\zpo$, and $\bv({a'})$ for $a'=\sum_{i=n-k}^{n-2}a_{i}p^{i}$.
For vertices $\bv({a+bp^{n}})$ with $b\in\zp/p\zp$ and $\bv({a'})$, we say that they are in \emph{$\qp$-direction} at the vertex $\bv({a})$. 
Additionally, we say that a subtree is an \emph{$H$-subtree} if it is of the form $g.T_{H}$ for some $g\in G$.

Let us define a \emph{branch} of $T_{G}$.
Let $\bv$ be a vertex is $T_{G}$ and $\bw$ be an adjacent vertex of $\bv$.
A subset 
$B=\{{\bf{x}}\in\vtxg : d({\bf{x}},\bw)<d({\bf{x}},\bv)\}$
 of $T_{G}$ is a rooted subtree of $T_{G}$ and we call $B$ a branch containing $\bw$ at $\bv$.
There are ($p^{2}+1$)-branches at each vertex $\bv\in\vtxg$ which are $(p^{2}+1)$-regular at every vertex except the root vertex adjacent to $\bv$, which has degree $p^{2}$.

\subsubsection{The matrix groups and their actions on the Bruhat-Tits tree}
The usual max norm of $\GLt(\qpo)$ is not well-defined on $G=\PGLt(\qpo)$ as its value depends on the representative of the element of $G$.
To define a metric on $G$, first, let us fix the following subset of $\GLt(\qpo)$ by
$$
	G_{1}:=\left\{
	(a_{ij})
	: a_{22}=1
	\right\},
\hspace{10pt}
	G_{2}:=
	\left\{
	(a_{ij})
	: a_{12}=1, a_{22}=0
	\right\}.
$$
It is easy to check that any element of $G$ has a unique representative in either $G_{1}$ or $G_{2}$.
Remark that 
{\small $\begin{pmatrix} 0&1\\1&0 \end{pmatrix}\in \GLt(\qp)$}
sends every element of $G_{2}$ into $G_{1}$, thus
representatives of $H$-coset in $G/H$ can be chosen from $G_{1}$.
Now 
for any $g,h\in G$, choose $g',h'\in G_{1}\sqcup G_{2}$ such that $[g']=g$ and $[h']=h$ in $G$.
From the max norm of $\GLt(\qpo)$, we define a metric on $G$ by
$$
	d(g,h)=\norm{g'-h'}=\max_{1\leq i,j\leq2}\{ \pn{g'_{ij}-h'_{ij}}\}.
$$
We will use the following in Section 3 to measure the distance between $g\in G$ and the identity: for $g\in G$ and $g'\in G_{1}\sqcup G_{2}$ such that $[g']=g$,
$$
	\norme{g}:=\norm{g'-e}
	=\max\{\pn{g'_{11}-1},\pn{g'_{12}},\pn{g'_{21}},\pn{g'_{22}-1}\}.
$$

Recall that we denoted a diagonal matrix by $\diag(a_{1},a_{2})$.
For unipotent matrices, we let 
{\small$u_{t}=\begin{pmatrix}
	1 & t \\ 0 & 1
\end{pmatrix}$}.
Now, we define the following subgroups of $G$ which have crucial roles in our paper.
\begin{align}\label{matrices}
\begin{split}
\cal{A}&=\left\{
\diag(a,1)
:a\in\qp^{\times} \right\},
	\hspace{10pt}
\cal{U}=\left\{u_{t}
:t \in \qp \right\},\\
\cal{V}&=\left\{u_{\omg t}
:t \in \qp \right\},
	\hspace{20pt}
\cal{N}=\left\{
u_{s}
:s \in \qpo \right\}. 
\end{split}
\end{align}

\subsubsection{Circles in the boundary $\partial T_{G}$}
	We define circles in $\qpob$. 
The action of $G$ on $T_{G}$ and on $\partial T_{G}$ have the following relation. For any $g\in G$,
$$
	\partial(g. T_{G})=g.(\partial T_{G})=g.\qpb.
$$ 
Comparing our setup with the real hyperbolic case by the following,
	\begin{align*}
		T_{G}\longleftrightarrow \m{H}^{3},\hspace{5pt} 
		\partial T_{G}&\cong\qpob\longleftrightarrow \cl{\c}\cong\m{S}^{2},
	\\
		T_{H}\longleftrightarrow\m{H}^{2},\hspace{5pt}
		\partial T_{H}&\cong\qpb\longleftrightarrow \m{S}^{1}\subset\cl{\c},
	\end{align*}
it is reasonable to call $g.\qpb$ for $g\in G$ a circle in $\partial T_{G}$.
We remark that $g.\partial T_{H}$ is not a $p$-adic metric circle in $\cl{\qpo}$ but only a fractal subset for every $g\in G$.
Then, $\cal{C}=\{C=g. \qpb: g\in G\}$
can be considered as the set of circles in $\qpob$. 
Note that $g.\qpb=g.\partial T_{H}=gH.x$ for any $x\in\qpb$. 
Thus we can identify $\cal{C}$ with $G/H$.
We will often use this identification.

\subsubsection{Frames and the frame bundle of $T_{G}$}
	Our goal of this subsection is to define the renormalized frame bundle $\RFM$ of $X=\Gamma\backslash T_{G}$ in $\Gamma\backslash G$.
Before defining $\RFM$, we will define the frames and geodesics corresponding to elements in $G$. 

	Every (bi-infinite) geodesic in $T_{G}$ is uniquely determined by a pair of distinct boundary points.
	Let $\cal{G}$ be the set of triple points on the boundary:
$$
	\cal{G}=\{(x,y,z):x,y,z\hspace{5pt}
	\textrm{are mutually distinct elements of}\hspace{5pt}\cl{\qpo}\}.
$$
We call $\cal{G}$ the frame bundle over $T_{G}$.
The Möbius action of $G$ on $\cal{G}$ is simply transitive, thus we identify $G$ with $\cal{G}$ via the map $\Phi:G\rightarrow \cal{G}$  given by
$$
	\Phi(g)=g.(0,\infty,1)=(g.0,g.\infty,g.1).
$$ 
	For $g\in g_{\bv}.\PGLt(\zpo)$, the intersection of three geodesics defined by any two points of $\{g.0,g.\infty,g.1\}$ is the vertex $\bv$.
It is natural to regard $(g.0,g.\infty,g.1)$ as a frame at $\bv$.
From now on, we will identify the frame $(g.0,g.\infty,g.1)$ with $g$.
With this identification, the set of all frames at $\bv$ is $g_{\bv}.\PGLt(\zpo)$.
Hence, we regard $G$ as the frame bundle over $T_{G}$.

	We have the following identification:
\begin{align*}
    G/\PGLt(\zpo) \simeq 
    \textbf{V}(T_{G}).
\end{align*}
Note that $\PGLt(\zpo)$ is the maximal compact subgroup of $G$.
The above identification is an analog of the relation of $\PSLt(\c)$ with the frame bundle of $\mathbb{H}^3$.

Define the \emph{limit set} of $\Gamma$ by
$
	\Lambda=\cl{\Gamma.\bv\cap\partial T_{G}}
$
for some $\bv\in\vtxg$, which is independent of the choice of $\bv$ \cite[Section 3]{SC}.
Recall that we defined $S_{\Gamma}$ as the convex hull of $\Lambda$, i.e., the smallest closed subset of $T_{G}$ that contains every infinite geodesic in $T_{G}$ connecting two distinct elements of $\Lambda$.
It follows that $\partial S_{\Gamma}=\Lambda$ and $\ax(\gamma)\subset S_{\Gamma}$ for all $\gamma\in\Gamma$.
We will discuss axes of hyperbolic elements in the next subsection.

	\begin{dfn}\label{def:rfx}
		Denote the frame bundle over $X=\Gamma\backslash T_{G}$ by $\FM=\Gamma\backslash G$.
		The renormalized frame bundle $\RFM$ is defined as
$$
\RFM=\{[g]\in\FM:g.0,g.\infty\in\Lambda\}\subset\Gamma\backslash G.
$$
	\end{dfn}
 We remark that if $g\in\RFM$, then the projection of geodesic $(g.0,g.\infty)$ is bounded in $X$ (actually, in $\core(X)$) since $(g.0,g.\infty)$ is in $S_{\Gamma}$.
We also remark that $\RFM$ is compact in $FX$ since the projection of $\RFM$ to $X$ is exactly $\core(X)$, which is a finite graph (see Theorem 5 in Section 1.5 in Chapter 2 of \cite{Ser}).

\begin{rmk}\label{rmk:coset}
{\rm
Note that $\FM/H=\Gamma\backslash G/H$ and $\Gamma\backslash \cal{C}$ are identified.
The frame bundle $\FM$ and the set of circles $\cal{C}$ have quotient topologies induced from the topology of $G$.
Consequently, topologies of $\FM$ and $\cal{C}$ induce the same topology of $\Gamma\backslash G/H$.
We remark that $H$-action on $\FM$ is minimal if and only if $\Gamma$-action on $\cal{C}$ is minimal
(see Section 1.7 of \cite{GN}).
}
\end{rmk}

\subsection{Schottky subgroups of $\PGLt(\m{K})$}\label{ss:sch}
	For $n\geq2$, let $\{\gamma_1,\gamma_2,\cdots,\gamma_n\}$ be a subset of $G$ consisting of hyperbolic elements.
Recall that each $\gamma_{i}$ is conjugate to a diagonal matrix of the form
$
	d_{i} = 
    \begin{pmatrix} p^{-n_i}a_{i} & 0 \\ 0 & 1 
    \end{pmatrix}
$,
where $n_{i}\in\N$ is the translation length of $\gamma_i$ and $a_{i}\in\zpo^{\xx}$.
Set $\gamma_{i}=g_{i}d_{i}g_{i}^{-1}$ for some $g_{i}\in G$.
Denote the \emph{axis} of $\gamma_i$ by $\ax(\gamma_i)$. 
Let $\gamma^{-}_i$ and $\gamma^{+}_i$ be the repelling and attracting fixed points of $\gamma_i$, respectively. 
Then, $g_i.0=\gamma_i^{-}$ and $g_i.\infty=\gamma_i^{+}$, and $\ax(\gamma_i)=(\gamma_i^{-},\gamma_i^{+})$. 

	Label vertices of $\ax(\gamma_{i})$ from $\gamma_{i}^{-}$ to $\gamma_{i}^{+}$ by $\{\bv^{i}_{j}:j\in\z\}$ so that $\op{d}(\bv^{i}_{j},\bv^{i}_{j+k})=k$ for all $k\in\z$.
Let
$$
	\opp_{i}=
	\{\bv\in\vtxg:d(\bv,\bv^{i}_{n_{i}+1})<d(\bv,\bv^{i}_{n_{i}})\}
$$
and
$$
	\onn_{i}=
	\{\bv\in\vtxg:d(\bv,\bv^{i}_{0})<d(\bv,\bv^{i}_{1})\}.
$$
Note that $\gamma_{i}.(\onn_{i})=T_{G}-\opp_{i}$
and $\gamma_{i}.(T_{G}-\onn_{i})=\opp_{i}$.

\begin{dfn}\cite[Definition 1.4]{Lub}\label{dfn:schottky}
The group
$
	\Gamma=\langle \gamma_{1},\gamma_{2}\cdots\gamma_{n} \rangle, \hspace{3pt}
	n\geq 2
$
is called a \emph{Schottky group} if there exists a label of vertices of axes of $\gamma_{i}$'s such that $\{\opp_1,\cdots,\opp_{n},\onn_{1},\cdots,\onn_{n}\}$ are mutually disjoint.
The set of generators $\{\gamma_{1},\gamma_{2},\cdots,\gamma_{n}\}$ is called a \emph{Schottky basis}.
\end{dfn}

The following proposition will be used frequently.

\begin{prop}\cite[Proposition 1.6]{Lub}\label{prop:Schottky}
	Let $\Gamma=\langle\gamma_{1},\cdots,\gamma_{n}\rangle$, $n\geq2$ be a Schottky group. 
	 We have the following:
	\begin{enumerate}
		\item $\Gamma$ is a discrete group.
		\item Every element in $\Gamma$ is hyperbolic.
		\item $\Gamma$ is a free group with elements of Schottky basis as generators. 
		\item the set $\cal{F}=\vtxg-\bigcup_{i=1}^{n}(\opp_{i}\cup\onn_{i})$ is a 	fundamental domain for the $\Gamma$-action.
	\end{enumerate}
\end{prop}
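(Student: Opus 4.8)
The plan is to run the classical ping-pong (table-tennis) argument adapted to the tree $T_{G}$, extracting all four conclusions from a single inductive lemma. The engine is the pair of relations recorded just before Definition \ref{dfn:schottky}, namely $\gamma_{i}.\onn_{i}=T_{G}-\opp_{i}$ and $\gamma_{i}.(T_{G}-\onn_{i})=\opp_{i}$, which upon inverting give $\gamma_{i}^{-1}.(T_{G}-\opp_{i})=\onn_{i}$; thus (writing $O_{i}^{\e}$ for $\opp_{i}$ when $\e=+$ and $\onn_{i}$ when $\e=-$) the map $\gamma_{i}^{\e}$ carries everything outside the \emph{forbidden} branch $O_{i}^{-\e}$ into the \emph{target} branch $O_{i}^{\e}$. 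Combined with the Schottky hypothesis that $\{\opp_{1},\dots,\opp_{n},\onn_{1},\dots,\onn_{n}\}$ are pairwise disjoint and that $\cal{F}$ avoids all of them, I would first prove the key lemma by induction on word length: if $w=\gamma_{i_{1}}^{\e_{1}}\cdots\gamma_{i_{k}}^{\e_{k}}$ is a nonempty \emph{reduced} word and $\bv\in\cal{F}$, then $w.\bv\in O_{i_{1}}^{\e_{1}}$. The base case is immediate, and in the inductive step reducedness forbids $(i_{1},\e_{1})=(i_{2},-\e_{2})$, so the target $O_{i_{2}}^{\e_{2}}$ of the shorter word is distinct from, hence disjoint from, the forbidden branch $O_{i_{1}}^{-\e_{1}}$; the ping-pong relation then pushes the image into $O_{i_{1}}^{\e_{1}}$.

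From this lemma, claims (3) and (1) are immediate. For (3), if $w\ne e$ were a relation then $w.\bv=\bv$ for $\bv\in\cal{F}$, but the lemma places $w.\bv$ in $O_{i_{1}}^{\e_{1}}$, which is disjoint from $\cal{F}\ni\bv$; hence no nonempty reduced word is trivial and $\Gamma$ is free on $\{\gamma_{1},\dots,\gamma_{n}\}$. For (1), the same computation shows $\stab_{\Gamma}(\bv)=\{e\}$ for any $\bv\in\cal{F}$; since $\stab_{G}(\bv)$ is a conjugate of the compact \emph{open} subgroup $\PGLt(\zpo)$, the identity has an open neighborhood meeting $\Gamma$ only in $e$, so $\Gamma$ is discrete in $G$.

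Claim (2) follows from (1) and (3). Discreteness of $\Gamma$ together with compactness of the vertex stabilizers $\stab_{G}(\bw)$ forces every $\stab_{\Gamma}(\bw)$ to be finite; freeness makes $\Gamma$ torsion-free, so these finite stabilizers are trivial and $\Gamma$ acts freely on $\vtxg$. A tree isometry is elliptic, an edge inversion, or hyperbolic; an edge inversion $w$ satisfies $w^{2}.\bw=\bw$ for an endpoint $\bw$, forcing $w^{2}=e$ and contradicting torsion-freeness, while freeness rules out the elliptic case. Hence every nontrivial element of $\Gamma$ is hyperbolic.

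Finally, for (4) I would show the translates $\{w.\cal{F}:w\in\Gamma\}$ tile $\vtxg$. Disjointness is again the lemma: if $u.\cal{F}\cap\cal{F}\ne\varnothing$ with $u\ne e$, then some $\bv\in\cal{F}$ has $u.\bv\in\cal{F}\cap O_{i_{1}}^{\e_{1}}=\varnothing$, a contradiction. Covering is the main obstacle. Here I would iterate the decomposition obtained from the ping-pong relations,
$$
O_{i}^{\e}=\gamma_{i}^{\e}.\cal{F}\ \sqcup\ \bigsqcup_{O_{j}^{\delta}\ne O_{i}^{-\e}}\gamma_{i}^{\e}.O_{j}^{\delta},
$$
which expresses each branch as one translate of $\cal{F}$ together with strictly deeper translated branches; combined with $T_{G}=\cal{F}\sqcup\bigsqcup_{i,\e}O_{i}^{\e}$, unfolding the recursion writes $O_{i}^{\e}$ as the disjoint union of all $w.\cal{F}$ over reduced words $w$ beginning with $\gamma_{i}^{\e}$. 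The delicate point is termination: one must verify that at each unfolding the roots of the residual branches move strictly farther into the tree, so that any fixed vertex, lying at finite depth inside its branch, is captured in some $w.\cal{F}$ after finitely many steps. Granting this, $\vtxg=\bigsqcup_{w\in\Gamma}w.\cal{F}$ and $\cal{F}$ is a fundamental domain.
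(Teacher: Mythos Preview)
The paper does not supply its own proof of this proposition: it is stated with a direct citation to \cite[Proposition~1.6]{Lub} and then used as a black box throughout. Your ping-pong argument is exactly the classical proof and is correct in all four parts.

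The one point you flag as delicate---termination of the covering recursion in (4)---is easily settled once you note that $\gamma_{i}^{\e}.\cal{F}$ always contains the root of $O_{i}^{\e}$: indeed $\cal{F}$ contains the axis segment $[\bv_{1}^{i},\bv_{n_{i}}^{i}]$, and $\gamma_{i}$ carries $\bv_{1}^{i}$ to $\bv_{n_{i}+1}^{i}$, the root of $\opp_{i}$. Consequently every residual branch $\gamma_{i}^{\e}.O_{j}^{\delta}$ is separated from $\cal{F}$ by $\gamma_{i}^{\e}.\cal{F}$ and hence lies at distance from $\cal{F}$ strictly greater than the root of $O_{i}^{\e}$ does. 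Inductively, the residual branches at stage $k$ of the unfolding sit at distance at least $k$ from $\cal{F}$, so any fixed vertex is absorbed into some $w.\cal{F}$ after finitely many steps.
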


	A fundamental domain in Proposition \ref{prop:Schottky} is called \emph{a Schottky fundamental domain}.
Since the convex core of $X$ is compact, the \emph{diameter} of $\core(X)$ defined by
$
	\op{diam}(\core(X))=\max\{d(\bv,\bw):\bv,\bw\in S_{\Gamma}\cap\cal{F}\}
$
is finite.

\subsection{Axes of hyperbolic elements and the limit set}\label{ss:ax}
\begin{figure}[h!]
  \includegraphics[width=0.4\linewidth]{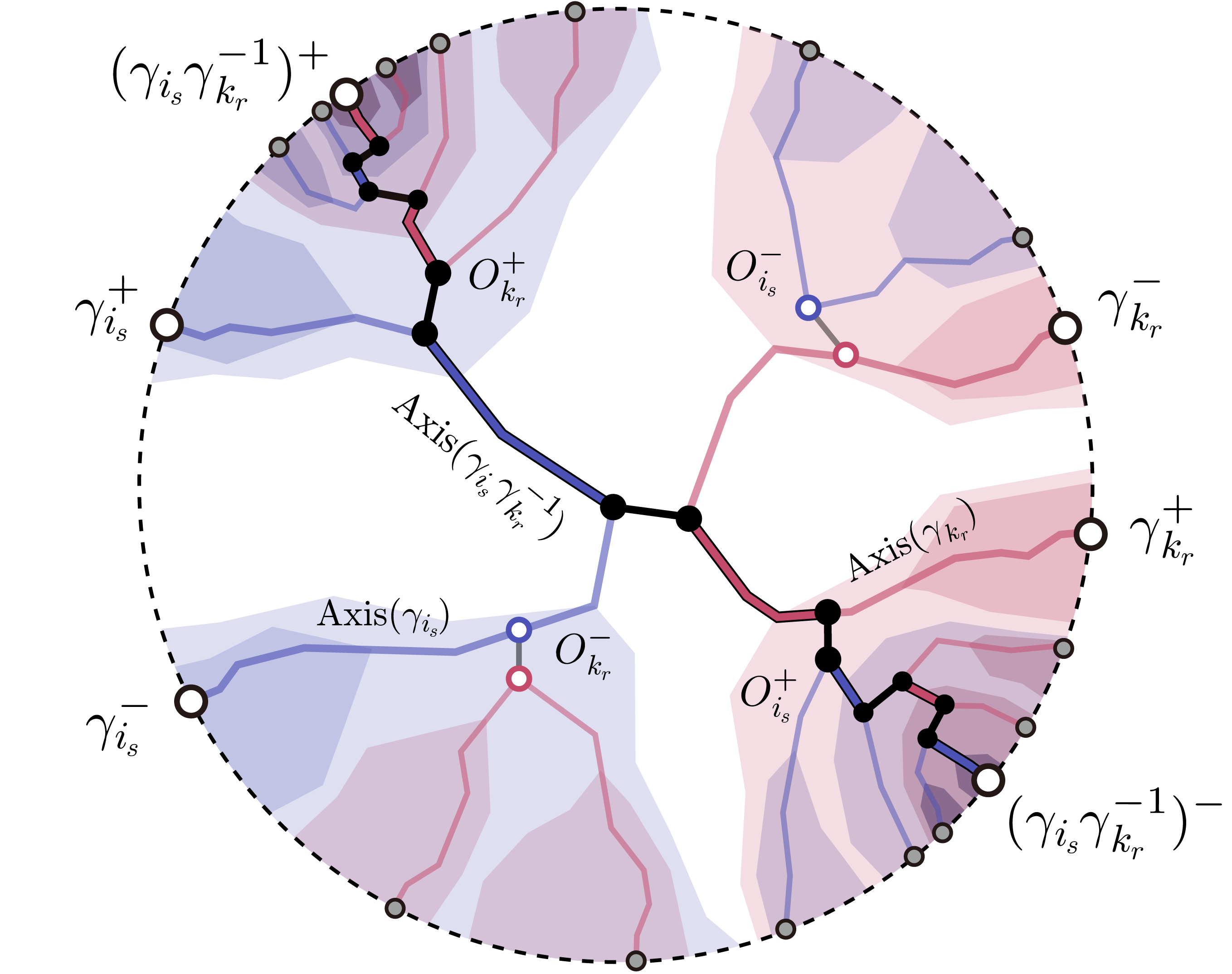}
  \caption{
  Example of the axis of $\gamma_{i_{s}}\gamma_{k_{r}}^{-1}$ for some $\gamma_{i_{s}}$ and $\gamma_{k_{r}}$.}
  \label{fig:axis}
\end{figure}
Let us choose Schottky basis elements $\gamma_{i}\in\Gamma$ with $\ax(\gamma_{i})=(\gamma_{i}^{-},\gamma_{i}^{+})$, and $\opp_{i},\onn_{i}$ for $i=1,\cdots,n$ with a Schottky fundamental domain $\cal{F}$ as in Section \ref{ss:sch}.
Note that $\Lambda=\partial S_{\Gamma}\subset\partial\left(\bigcup_{i=1}^{n}(\opp_{i}\cup\onn_{i})\right)$
since $\partial\cal{F}$ contains no end points of any axes of elements of $\Gamma$.

We first claim that if $\bigcup_{i=1}^{n}\ax(\gamma_{i})$ is a connected set, then we have $\cal{F}\cap\bigcup_{i=1}^{n}\ax(\gamma_{i})=\cal{F}\cap S_{\Gamma}$.
Choose $x,y\in\Lambda$.
We may assume that $x\in\partial\opp_{i}$ and $y\in\partial\opp_{j}$ for some $i,j\in\{1,2,\cdots n\}$.
Since $(x,y)\subset S_{\Gamma}=\hull(\Lambda)$, we have $[\bv_{n_{i}}^{i},\bv_{n_{j}}^{j}]\subset(x,y)$, where $\bv_{n_{i}}^{i}\in\ax(\gamma_{i}),\bv_{n_{j}}^{j}\in\ax(\gamma_{j})$ as in Section \ref{ss:sch}.
By connectedness of $\bigcup_{i=1}^{n}\ax(\gamma_{i})$, it follows that $[\bv_{n_{i}}^{i},\bv_{n_{j}}^{j}]\subset\bigcup_{i=1}^{n}\ax(\gamma_{i})$. 
Therefore, every vertex in $\cal{F}\cap S_{\Gamma}$ is contained in some axis.

We now assume that $\bigcup_{i=1}^{n}\ax(\gamma_{i})$ has more than one connected component.
We claim that for every vertex $\bv$ in $\cal{F}\cap S_{\Gamma}$, there exists $\gamma\in\Gamma$ such that $\bv\in\ax(\gamma)$.
Let $A_{i}=\bigcup_{j\in I_{i}}\ax(\gamma_{j})$ be connected components of $\bigcup_{i=1}^{n}\ax(\gamma_{i})$
for each $i=1,2,\cdots,m$.
Now fix $A_{i}$ and consider 
$$d(A_{i},A_{j}):=\min\{d(\bv,\bw):\bv\in A_{i},\bw\in A_{j}\}.$$
Let $A_{k}\neq A_{i}$ be a connected component minimizing $d(A_{i},\cdot)$.
Let $\bw_{i}\in A_{i}$ and $\bw_{k}\in A_{k}$ be the vertices such that $d(\bw_{i},\bw_{k})=d(A_{i},A_{k})$.
Then, $\bw_{i}$ and $\bw_{k}$ must be in $\cal{F}$.
Otherwise, for instance if $\bw_{i}\notin\cal{F}$, then $\bw_{i}\in \cal{O}_{l}^{+}\cup\cal{O}_{l}^{-}$ for some $l\in\{1,2,\cdots n\}$.
By definition of $\cal{F}$ and $A_{i}$, it follows that $\bw_{i}\in \ax(\gamma_{l})$.
Thus, for any $\bw\in\ax(\gamma_{l})\cap\cal{F}$, we have $d(\bw,A_{k})<d(\bw_{l},A_{k})$, which is a contradiction.

Note that there exist $\gamma_{i_{s}},\gamma_{k_{r}}\in\{\gamma_{1},\cdots\gamma_{n}\}$ such that $\bw_{i}\in\cal{F}\cap\ax(\gamma_{i_{s}})$ and $\bw_{k}\in\cal{F}\cap\ax(\gamma_{k_{r}})$.
Furthermore, if we let $\gamma=\gamma_{i_{s}}\gamma_{k_{r}}^{-1}$, then $\gamma^{\ell}.\bw_{i}\in\opp_{i_{s}}$ and $\gamma^{-\ell}.\bw_{k}\in\opp_{k_{r}}$ for all $\ell\in\z_{>0}$.
Hence, $\gamma^{+}\in\partial\opp_{i_{s}}$ and $\gamma^{-}\in\partial\opp_{k_{r}}$.
Consequently, it follows that $[\bw_{i},\bw_{k}]\subset\ax(\gamma)$.
Any vertex in $\cal{F}\cap S_{\Gamma}$ is contained in either such $A_{i}$'s or $[\bw_{i},\bw_{k}]$'s since $S_{\Gamma}=\hull(\Lambda)$.
Hence, the claim follows.

%

Now for any $\bv\in S_{\Gamma}$, there exists $\ut\in\Gamma$ such that $\ut^{-1}.\bv\in \cal{F}\cap S_{\Gamma}$.
By our claim, there exists $\gamma\in\Gamma$ such that $\ut^{-1}.\bv\in\ax(\gamma)$.
Thus $\bv\in\ax(\ut\gamma\ut^{-1})$.
Moreover, for any geodesic segment $[\bv,\bw]$ in $S_{\Gamma}$, there exist $\ut_{1},\ut_{2}\in\Gamma$ such that $\bv\in\ax(\ut_{1})$, and $\bw\in\ax(\ut_{2})$.
Even if $\ut_{1},\ut_{2}$ are not Schottky basis elements, we can similarly define $O_{\ut_{i}}^{\pm}$ for $i=1,2$.
Then, for $i=1,2$, there exists $n_{i}\in\z$ such that $\ut_{i}^{n_{i}}.[\bv,\bw]\subset O_{\ut_{i}}^{+}$.
It follows that $\ax(\ut_{1}^{n_{1}}\ut_{2}^{-n_{2}})$ contains $[\bv,\bw]$.
Since $\bv,\bw$ are arbitrary vertices in $S_{\Gamma}$, we conclude that any geodesic segment in $S_{\Gamma}$ is contained in $\ax(\ut)$ for some $\ut\in\Gamma$.

\begin{prop}\label{prop:axisapp}
For any pair of two distinct points $\al,\beta\in\Lambda$, there is a sequence $\{\ut_{i}\}_{i=0}^{\infty}$ in $\Gamma$ such that $\ax(\ut_{i})\rightarrow(\al,\beta)$ as $i\rightarrow\infty$.
\end{prop}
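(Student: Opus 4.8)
The plan is to reduce the statement to the conclusion established just above---that every geodesic segment lying in $S_{\Gamma}$ is contained in the axis of some element of $\Gamma$---and then to upgrade the convergence of segments to convergence of the whole axes. Since $\al,\bt\in\Lambda$ are distinct and $S_{\Gamma}=\hull(\Lambda)$, the bi-infinite geodesic $(\al,\bt)$ lies entirely in $S_{\Gamma}$. First I would choose vertices $\bv_{i},\bw_{i}$ on $(\al,\bt)$, lying on opposite sides of a fixed reference vertex of $(\al,\bt)$, with $\bv_{i}\to\al$ and $\bw_{i}\to\bt$, so that the segments $[\bv_{i},\bw_{i}]$ exhaust $(\al,\bt)$. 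Each $[\bv_{i},\bw_{i}]\subset S_{\Gamma}$ is a geodesic segment, so the property established above yields $\ut_{i}\in\Gamma$ with $[\bv_{i},\bw_{i}]\subset\ax(\ut_{i})$. Because $\Gamma$ consists of hyperbolic elements (Proposition~\ref{prop:Schottky}), each $\ax(\ut_{i})$ is a genuine bi-infinite geodesic with distinct endpoints $\ut_{i}^{-},\ut_{i}^{+}$; orient it so that reading from $\ut_{i}^{-}$ to $\ut_{i}^{+}$ passes through $\bv_{i}$ before $\bw_{i}$.

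It then remains to show $\ut_{i}^{-}\to\al$ and $\ut_{i}^{+}\to\bt$ in $\partial T_{G}$, which gives $\ax(\ut_{i})\to(\al,\bt)$. I would fix a base vertex $\mathbf{o}$ and let $\mathbf{c}$ be its nearest-point projection onto $(\al,\bt)$; for all large $i$ the vertex $\mathbf{c}$ lies in the interior of $[\bv_{i},\bw_{i}]$, so near $\mathbf{c}$ the axis $\ax(\ut_{i})$ coincides with $(\al,\bt)$. The point to establish is that $\mathbf{c}$ is then also the nearest-point projection of $\mathbf{o}$ onto $\ax(\ut_{i})$: the geodesic $[\mathbf{o},\mathbf{c}]$ meets $(\al,\bt)$ only at $\mathbf{c}$, hence enters $\mathbf{c}$ along a direction transverse to $(\al,\bt)$, and since the two local directions of $\ax(\ut_{i})$ at $\mathbf{c}$ are exactly those of $(\al,\bt)$, the geodesic $[\mathbf{o},\mathbf{c}]$ first meets $\ax(\ut_{i})$ at $\mathbf{c}$ as well. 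Consequently both $[\mathbf{o},\al)$ and $[\mathbf{o},\ut_{i}^{-})$ factor as $[\mathbf{o},\mathbf{c}]$ followed by the common sub-segment $[\mathbf{c},\bv_{i}]\subset\ax(\ut_{i})$, so they share the initial segment $[\mathbf{o},\bv_{i}]$. Therefore the Gromov product satisfies $(\al\mid\ut_{i}^{-})_{\mathbf{o}}\geq d(\mathbf{o},\bv_{i})$, and since $\bv_{i}\to\al$ forces $d(\mathbf{o},\bv_{i})\to\infty$, we get $\ut_{i}^{-}\to\al$. The same argument with $\bw_{i}$ in place of $\bv_{i}$ gives $\ut_{i}^{+}\to\bt$.

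The substance of the proof is entirely in the second paragraph. The existence of the approximating elements $\ut_{i}$ is immediate, but the axes $\ax(\ut_{i})$ may branch away from $(\al,\bt)$ immediately beyond $\bv_{i}$ and $\bw_{i}$, so the endpoints $\ut_{i}^{\pm}$ need not lie on $(\al,\bt)$ at all; the hard part is thus controlling where these endpoints go. The key observation that makes it routine is that the nearest-point projection $\mathbf{c}$ of the base vertex is common to $(\al,\bt)$ and to every $\ax(\ut_{i})$ for large $i$, which pins the two relevant geodesic rays together up to $\bv_{i}$ (respectively $\bw_{i}$) and lets the divergence point run off to the boundary. I note that neither high-branchedness nor Definition~\ref{def:acyl} is needed here; only the Schottky structure enters, through the segment-in-axis property established above and through hyperbolicity.
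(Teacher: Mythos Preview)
Your proposal is correct and follows essentially the same approach as the paper: exhaust $(\al,\bt)\subset S_{\Gamma}$ by finite segments and invoke the segment-in-axis property established just above to produce the $\ut_{i}$'s. The paper simply intersects $(\al,\bt)$ with growing balls $N_{m}(\bv_{0}^{*})$ to obtain the segments and then asserts $\ax(\ut_{i})\to(\al,\bt)$ directly from $[\bw_{i},\bv_{i}]\subset\ax(\ut_{i})$, without spelling out the endpoint convergence; your second paragraph supplies precisely that detail via the common nearest-point projection and the Gromov product, so your argument is a more explicit version of the same proof rather than a different route.
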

\begin{proof}
Let $\al,\beta\in\Lambda$.
From Section \ref{ss:bt},
recall that 
$
	N_{m}(\bv_{0}^{*})
$
for some $m\in\z_{\geq1}$ is
the $m$-neighborhood of the origin $\bv_{0}^{*}$ in $\vtxg$.
Note that $(\al,\beta)\subset S_{\Gamma}$.
Now there are $\bw_{m},\bv_{m}$ such that $N_{m}(\bv_{0}^{*})\cap(\al,\beta)=[\bw_{m},\bv_{m}]$ for sufficiently large $m$.
Thus, there is $\ut_{m}\in\Gamma$ such that $[\bw_{m},\bv_{m}]\subset\ax(\ut_{m})$.
Note that $[\bw_{i},\bv_{i}]\subset[\bw_{i+1},\bv_{i+1}]$.
Construct a sequence $\{[\bw_{i},\bv_{i}]\}_{i=m}^{\infty}$ from $N_{m}(\bv_{0}^{*})\cap(\al,\beta)$ with $\{\ut_{i}\}_{i=m}^{\infty}\subset\Gamma$ inductively.
By our construction, $[\bw_{i},\bv_{i}]\rightarrow(\al,\beta)$ as $i\rightarrow\infty$.
Since $[\bw_{i},\bv_{i}]\subset\ax(\ut_{i})$ for all $i$, 
we conclude that $\ax(\ut_{i})\rightarrow(\al,\beta)$ as $i\rightarrow\infty$.
\end{proof}

We have two disjoint subsets $\Lambda_{\ax}$ and $\Lambda^{*}$ of $\Lambda$,
$$
	\Lambda=\Lambda_{\ax}\cap\Lambda^{*},
$$
where $\Lambda_{\ax}=\{\gamma^{\pm}:\gamma\in\Gamma\}$, and $\Lambda^{*}=\Lambda-\Lambda_{\ax}$.

\subsection{Construction of examples of a convex cocompact highly-branched Schottky subgroup}
	Recall that any hyperbolic element $\gamma\in G$ is expressed as $\gamma=g_{\gamma}d_{\gamma}g_{\gamma}^{-1}$ for some diagonal element  $d_{\gamma}$ and $g_{\gamma}\in G$,
and that $\ax({\gamma})=(g_{\gamma}.0,g_{\gamma}.\infty)$.
Let us choose 
$$
	g_{\gamma}=\begin{pmatrix}
		\al & \beta \\ 1 & 1
	\end{pmatrix},
$$
so that the axis of $\gamma$ is the geodesic $(\beta,\al)$ with 
$\al\neq\beta$.
Let $\al=\sum_{i=n}^{\infty} \al_{i}p^{i}$ and 
$\al/\mathfrak{p}^{k}=\al\hspace{3pt}{\rm mod}\hspace{3pt}p^{k}$. 
Consider a sequence of vertices
$
	\left\{
	\begin{bmatrix}
		p^{n+k}&\al/\mathfrak{p}^{k}\\0&1
	\end{bmatrix}	
	:k\in\z_{\geq1}
	\right\},
$
that is the geodesic ray from the vertex $\bv_{n}^{*}\in(0,\infty)$ to $\al\in\partial T_{G}$. 
In this sense, we can associate every element $\al\in\partial T_{G}$ to a unique  geodesic ray starting from the vertex $\bv_{\nu_{p}(\al)}^{*}\in(0,\infty)$.

\begin{Ex}\label{ex:subgroup}{\rm
Let $p=3$ and $\omega=\sqrt{2}$.
The quadratic extension $\qp(\sqrt{2})$ of $\qp$ is unramified.
Let $G=\PGLt(\m{Q}_{3}(\sqrt{2}))$, $H=\PGLt(\m{Q}_{3})$, and 
$$
	\gamma_{i}
	=
	g_{i}
	\begin{pmatrix}
		p^{-k_{i}}u_{i} & 0 \\ 0 & 1
	\end{pmatrix}
	g_{i}^{-1}
	=
	\begin{pmatrix}
		p^{-k_{i}}u_{i}a_{i}-b_{i} & (1-p^{-k_{i}}u_{i})a_{i}b_{i} \\
		p^{-k_{i}}u_{i}-1 & -p^{-k_{i}}u_{i}b_{i}+a_{i}
	\end{pmatrix},
$$
where
$
	g_{i}=
	\begin{pmatrix}
		a_{i} & b_{i} \\ 1 & 1
	\end{pmatrix}
	\in G
$
, $k_{i}=1$ or $2$, and $u_{i}\in\mup\xx(1+p\zpo)$.
Let $\varphi$ be a generator of the set of $(p^{2}-1)$-th roots of unity $\mup$.

To satisfy the conditions in Definition \ref{def:acyl}, we have to choose appropriate quadruples $(a_{i},b_{i},k_{i},u_{i})$, and construct sufficiently many $g_{i}$'s. 
Set
{\small
\begin{align*}
	&\qua{1}=(1+\sqrt2\cdot3,1+(1+2\sqrt2)\cdot3,1,1),\\
	&\qua{2}=(1,1+(1+\sqrt{2})\cdot3,1,1),\\
	&\qua{3}=(1+(2+\sqrt{2})\cdot3,1+2\sqrt2\cdot3,1,1),\\
	&\qua{4}=(2+2\sqrt{2},\sqrt{2},1,
	{\bf
	\varphi\cdot
	(\sqrt{1+2\cdot 3^{2}}+\sqrt{2}\cdot 3)
	}),\\
	&\qua{5}=(1+\sqrt{2},1+2\sqrt{2},1,1),\\
	&\qua{6}=(2+\sqrt{2},2,1,1),\\
	&\qua{7}=(1+1\cdot3,0,2,1),\\
	&\qua{8}=(1+2\cdot3,2\sqrt{2},2,1).
\end{align*}
}%

\begin{figure}[t]
  \includegraphics[width=0.8\linewidth]{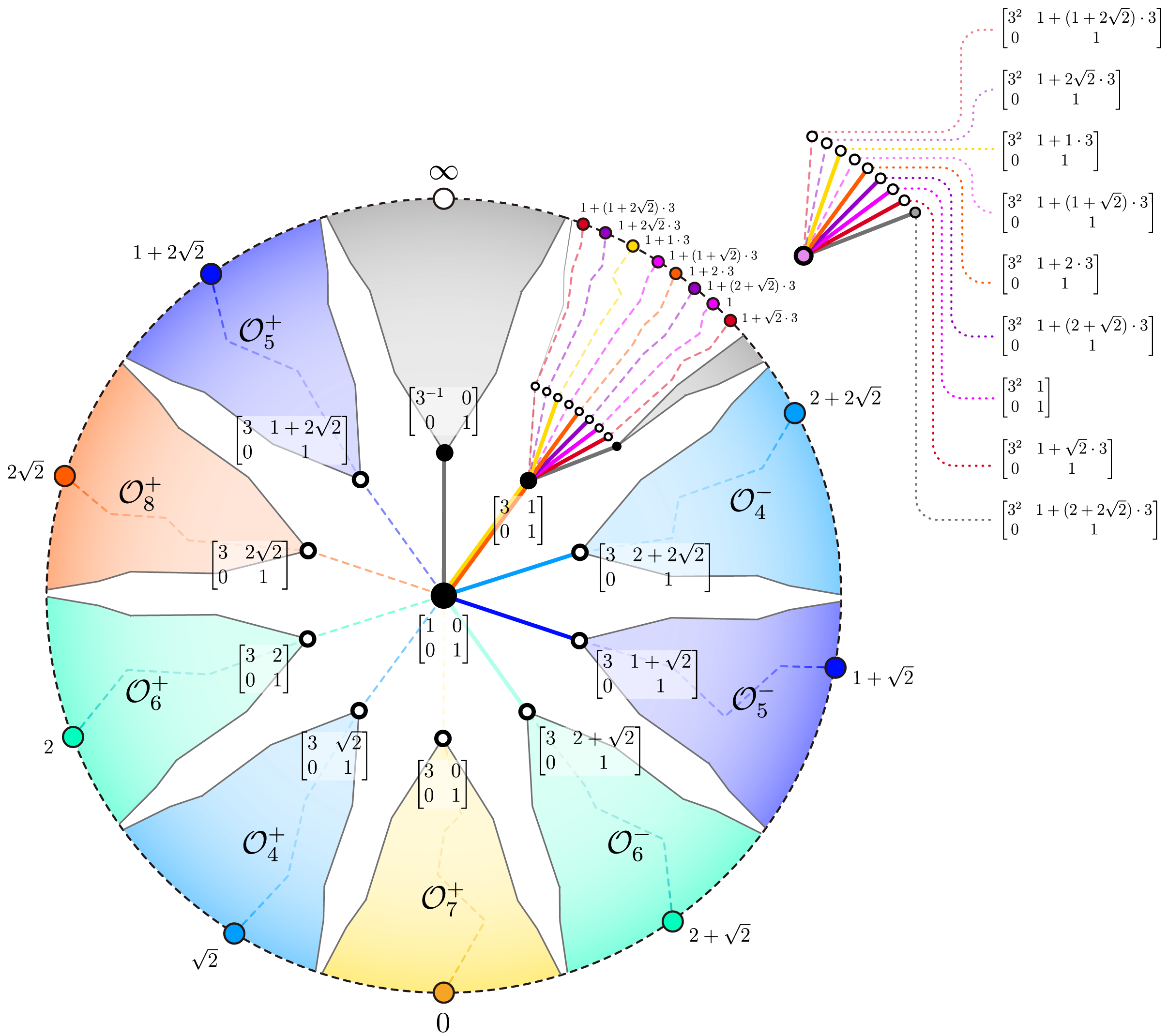}
  \caption{
A Schottky fundamental domain $\cal{F}$ of $\Gamma$ in Example \ref{ex:subgroup}.
The gray area is the non-compact part of $\cal{F}$.
Each color represents a Schottky basis element $\gamma_{i}$.
}
\end{figure}

We emphasized that $u_{4}=\varphi\cdot
	(\sqrt{1+2\cdot 3^{2}}+\sqrt{2}\cdot 3)$. 
Note that
$
	\gamma_{4}
	\simeq
	\begin{pmatrix}
		p^{-1}u_{4} & 0 \\ 0 & 1
	\end{pmatrix},
$
and that $\sqrt{1+2\cdot 3^{2}}+\sqrt{2}\cdot 3\in\scc$ satisfies the condition in Lemma \ref{lem:important}.
Hence, $\langle u_{4}\rangle$ is dense in $\mu_{3^{2}-1}\xx\scc$.

From (\ref{vertexname}), for $a=\sum_{i=m}^{m+n-1}a_{i}p^{i}\in\qpo$ with $n\geq 1$, recall
$$
\bv(a)=
\begin{bmatrix}
	p^{m+n} & a \\ 0 & 1
\end{bmatrix}\in T_{G}.
$$
To construct a Schottky fundamental domain, we label vertices of $\ax(\gamma_{i})=(a_{i},b_{i})$ by
$
\bv_{0}^{i}=\bv(a_{i})
$
letting $\dis\lim_{j\rightarrow+\infty}\bv^{i}_{j}=b_{i}$.
Then $\cal{O}_{i}^{\pm}$ and a Schottky fundamental domain $\cal{F}$ are defined.
Note that the preimage of the convex core of $X$ in $\cal{F}$ is given by 
$$
\cal{F}\cap S_{\Gamma}=
\left\{
	\begin{bmatrix}
		1 & 0\\
		0 & 1
	\end{bmatrix},
	\begin{bmatrix}
		3 & 1\\
		0 & 1
	\end{bmatrix}
\right\}.
$$
Here, 
{\small $
\bv_{1}^{i}=\begin{bmatrix}
		1 & 0\\
		0 & 1
	\end{bmatrix}
$} is contained in
$\ax(\gamma_{i})$ for $i=4,5,6$,
and
{\small$
\bv_{1}^{i}=\begin{bmatrix}
		3 & 1\\
		0 & 1
	\end{bmatrix}
$} is contained in 
$\ax(\gamma_{i})$ for $i=1,2,3$.
Additionally, $\ax(\gamma_{7})$ and $\ax(\gamma_{8})$ contain both
{\small $\begin{bmatrix}
		1 & 0\\
		0 & 1
	\end{bmatrix}$}
and
{\small
$\begin{bmatrix}
		3 & 1\\
		0 & 1
	\end{bmatrix}$}.
	In particular, 
	{\small $\bv_{1}^{7},\bv_{1}^{8}=
	\begin{bmatrix}
		3 & 1\\
		0 & 1
	\end{bmatrix}$}.
In the Schottky fundamental domain $\cal{F}$, the branches at vertices 
{\small $
\begin{bmatrix}
	3^{-1}&0\\0&1
\end{bmatrix}
$}
and
{\small $
\begin{bmatrix}
	3^{2}&1+(2+2\sqrt{2})\cdot3\\0&1
\end{bmatrix}
$}
are preimages of ends in $X$.

\begin{figure}
  \centering
  \begin{subfigure}{0.45\textwidth}
    \includegraphics[width=\textwidth]{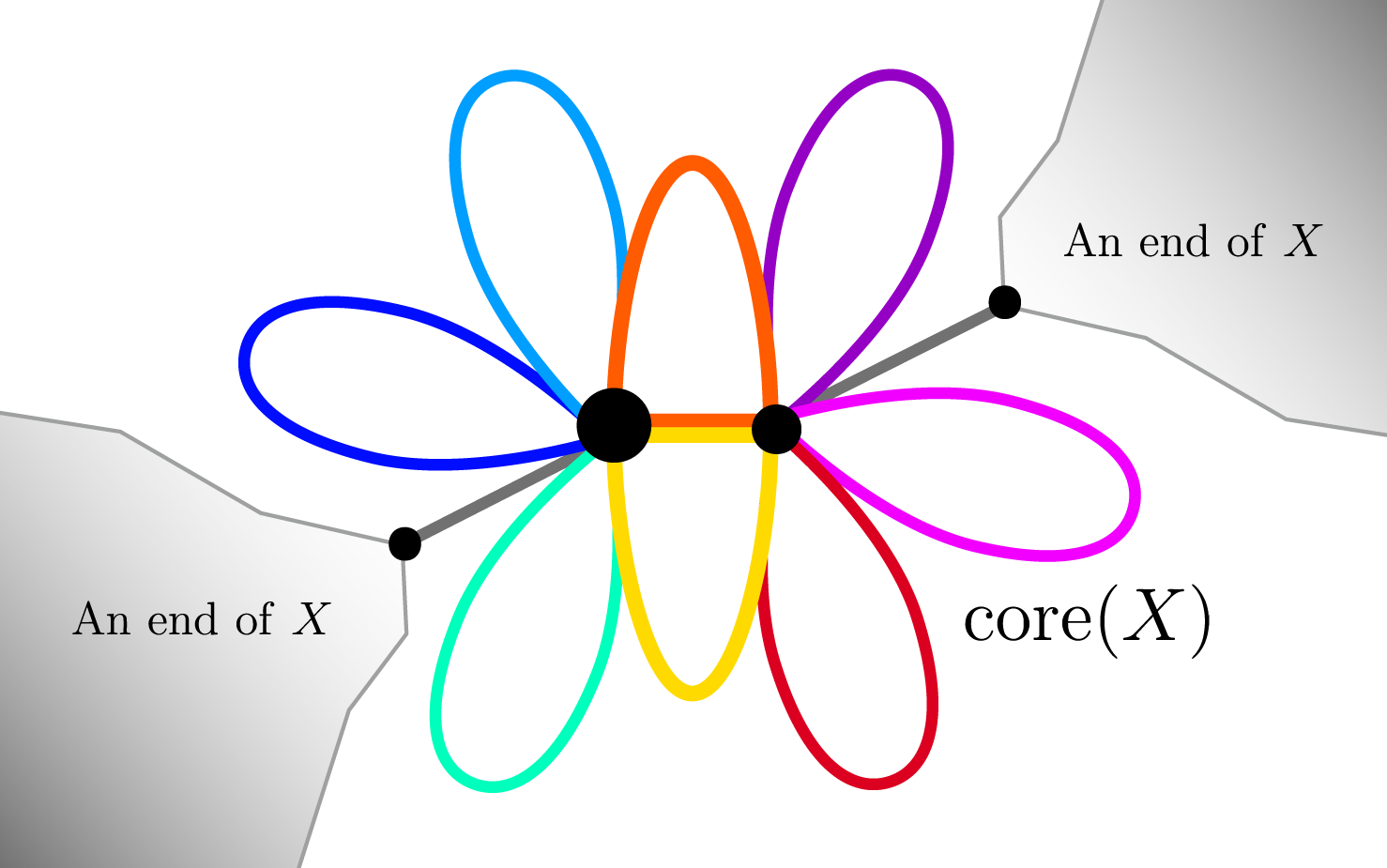}
    \caption{}
  \end{subfigure}
  \hfill
  \begin{subfigure}{0.45\textwidth}
    \includegraphics[width=\textwidth]{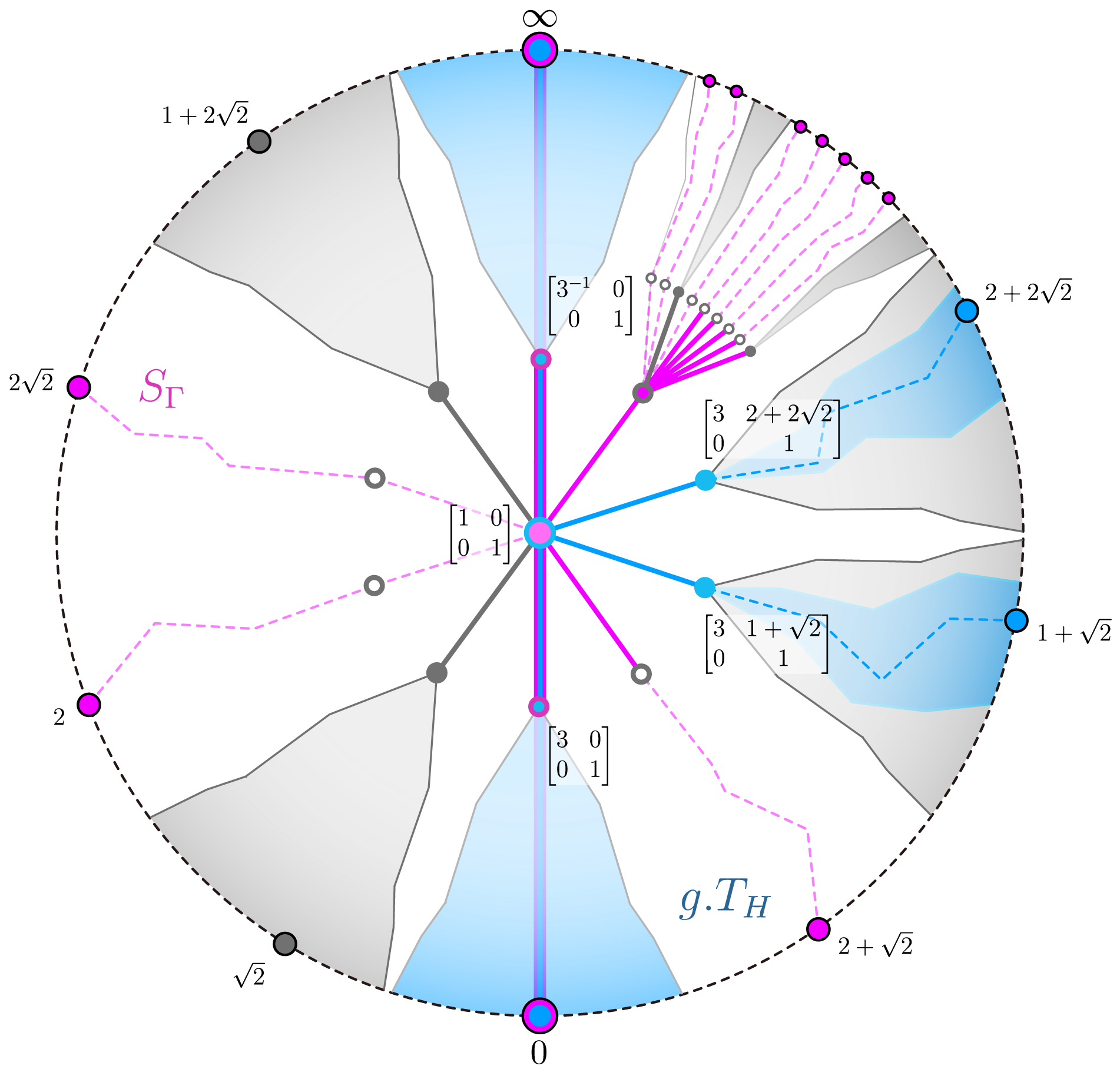}
    \caption{}
  \end{subfigure}
  \caption{
(A) The quotient graph $X$; (B) The counter-example we constructed. 
The blue tree is $g.T_{H}$ and the pink tree is $S_{\Gamma}$.
Here, $\partial(g.T_{H})\cap \partial(S_{\Gamma})=\{0,\infty\}$ by our construction.
 }
  \label{fig:examples}
\end{figure}

In Figure \ref{fig:examples}, we describe $\cal{F}$ and $X$.
Every axis contains a vertex of degree $p^{2}-p+3=9$ in $\cal{F}$.
Therefore, $\Gamma=\langle \gamma_{1},\cdots,\gamma_{8}\rangle$ is an example of a convex cocompact highly-branched Schottky subgroup.
Each dashed line in Figure \ref{fig:examples}(A) is a part of the axis colored by the same color and the invariant subtree $S_{\Gamma}$ contains every colored line.
Meanwhile, Figure \ref{fig:examples}(B) is the quotient graph $X$.
The colored loops form $\core(X)$.
Note that $S_{\Gamma}$ is the universal cover of $\core(X)$ in (B) and that vertices in the invariant subtree $S_{\Gamma}$ have degree 9.

On the other hand, we can construct a non-example with the same method.
Generate $\Gamma'$ by 
$\gamma_{1},\gamma_{2},\gamma_{3},\gamma_{6},\gamma_{8}$, and $\gamma'$, where $\gamma'=\op{diag}(p^{-1},1)$.
Note that $S_{\Gamma'}$ has degree 6 at the origin $\bv_{0}^{*}$.  
At the origin, there are $\sqrt{2},(1+\sqrt{2}),(1+2\sqrt{2}),(2+2\sqrt{2})$ -directions that are not contained in $S_{\Gamma'}$, i.e.,
the branches at
{\small
$
\begin{bmatrix}
3 & \sqrt{2} \\ 0 & 1
\end{bmatrix},
\begin{bmatrix}
3 & 1+\sqrt{2} \\ 0 & 1
\end{bmatrix},
\begin{bmatrix}
3 & 1+2\sqrt{2} \\ 0 & 1
\end{bmatrix},
\begin{bmatrix}
3 & 2+2\sqrt{2} \\ 0 & 1
\end{bmatrix}
$
}
are preimages of ends in $\Gamma'\backslash T_{G}$.

Now, let $g=\op{diag}(1+\sqrt{2},1)\in G$.
We observe that $g.T_{H}$ intersects only $\ax(\gamma')$ among the axes of Schottky basis elements.
It is because $g.T_{H}$ has vertices 
{\small
$
\begin{bmatrix}
3 & 0 \\ 0 & 1
\end{bmatrix},
\begin{bmatrix}
3^{-1} & 0 \\ 0 & 1
\end{bmatrix},
\begin{bmatrix}
3 & 1+\sqrt{2} \\ 0 & 1
\end{bmatrix},
\begin{bmatrix}
3 & 2+2\sqrt{2} \\ 0 & 1
\end{bmatrix}
$}
and the latter two vertices are not contained in $S_{\Gamma'}$.
Furthermore, $\gamma'$ preserves directions at $\bv_{0}^{*}$ along its axis $(0,\infty)$ since $\gamma'$ acts as a pure translation.
It follows that we have $g.\cl{\mathbb{Q}_{3}}\cap\Lambda=\{0,\infty\}$.
Thus, even if we have $g.\cl{\mathbb{Q}_{3}}\cap\Lambda\neq\varnothing$, the intersection is no longer an infinite set.
}
\end{Ex}

\section{Unipotent dynamics for dense orbits}\label{s:unip}


From now on, we set $\cal{C}=G/H$ under the identification $g_{C}.\cl{\qp}\sim g_{C}H$ as in Section \ref{ss:bt}.
In this section, our goal is to show that if $\Gamma C\subset\cal{C}$ is not closed, then $\cl{\Gamma C}$ contains $\cal{V}$-orbits, where $\cal{V}$ is the $\omega\qp$-unipotent subgroup in $G$.
We will show that any axis of an element of $\Gamma$ is contained in some circle in $\cl{\Gamma C}$.
Results in this chapter will be used to show that $\Gamma C$ is dense in $\cal{C}_{\Lambda}=\{C\in\cal{C}:C\cap\Lambda\neq\varnothing\}$ when $\Gamma C$ is not closed in $\cal{C}$.
We shall start with the non-Archimedean version of the unipotent blowup lemma in \cite{MMO}, which is Theorem \ref{thm:blowup}.


\subsection{Unipotent blowup and K-thickness}
We first define $K$-thickness which was briefly mentioned in Remark \ref{rmk1}.

\begin{dfn}\label{def:k-thick}
	Let $K\in\N$. 
We call $T\subset \qp$ a $K$-thick set if 
	$$
		(p^{-(K+l)}\zp-p^{-l}\zp)\cap T\neq\emptyset 
	$$
for every $l\in\z$.
\end{dfn}
We remark that if $T\subset\qp$ is $K$-thick, then $-T\subset\qp$ is also $K$-thick.
Recall that 
	$
	u_{t}=
		\begin{pmatrix}
			1&t\\0&1
		\end{pmatrix}
	$
for $t\in\qp$ and that for $a=\sum_{i=n}^{m}a_{i}p^{i}\in\qpo$,
$$
\bv(a)=\begin{bmatrix}
	p^{m+1} & a \\ 0 & 1
\end{bmatrix}.
$$

\begin{prop}\label{prop:k-thick}
	Let $\Gamma$ be convex cocompact highly-branched and $X=\Gamma\backslash T_{G}$.
	There exists $K>1$ such that for all $x\in \RFM$,
	$$
		T(x):=\{t\in\qp : xu_t \in \RFM\}
	$$
	is $K$-thick.
\end{prop}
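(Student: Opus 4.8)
The plan is to reduce the statement to a combinatorial claim about branch points of the subtree $S_g := g.T_H \cap S_\Gamma$ along the geodesic $(g.0,g.\infty)$, and then to extract those branch points from the degree hypotheses in Definition \ref{def:acyl}(1). Fix $x=[g]\in\RFM$, so $g.0,g.\infty\in\Lambda$ and the base vertex of $g$ lies on $(g.0,g.\infty)\subset S_\Gamma$. Since $u_t$ fixes $\infty$ and sends $0$ to $t$, we have $g u_t.\infty = g.\infty\in\Lambda$ automatically, while $g u_t.0 = g.t$; hence $xu_t\in\RFM$ if and only if $g.t\in\Lambda$. As $g.t\in g.\cl{\qp}=\partial(g.T_H)$ and $\partial S_\Gamma=\Lambda$, this is the same as $g.t\in\partial(g.T_H)\cap\Lambda=\partial S_g$, so
$$T(x)=\{\, t\in\qp : g.t\in\partial S_g \,\}.$$

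Next I would translate the valuation window of $K$-thickness into a height along the geodesic. Using the vertex labeling (\ref{vertexname}), for $t$ with $|t|_p=p^m$ the geodesic $(g.\infty,g.t)$ branches off $(g.0,g.\infty)$ exactly at the vertex $g.\bvv_{-m}$; and $t\in(p^{-(K+l)}\zp - p^{-l}\zp)$ means precisely $l<m\le K+l$. Thus proving $K$-thickness amounts to showing: for every $l\in\z$, at least one of the $K$ consecutive vertices $g.\bvv_{-(l+1)},\dots,g.\bvv_{-(l+K)}$ on $(g.0,g.\infty)$ carries a branch of $S_g$ leaving the geodesic which prolongs to a point of $\partial S_g$.

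The heart of the argument is a local degree count. At a vertex $\bw$ on $(g.0,g.\infty)\subset g.T_H$ there are $p+1$ neighbors inside $g.T_H$ (the $\qp$-directions), two of them along the geodesic, and $p^2-p$ neighbors outside $g.T_H$. If $\deg_{S_\Gamma}(\bw)\ge p^2-p+3$, then at most $(p^2+1)-(p^2-p+3)=p-2$ of the $p^2+1$ directions at $\bw$ miss $S_\Gamma$, so among the $p-1$ non-geodesic $\qp$-directions at least one survives in $S_\Gamma$, hence in $g.T_H\cap S_\Gamma=S_g$; this is a branch point of $S_g$ off the geodesic. (The weaker bound $p^2-p+2$ available at a general vertex of $S_\Gamma$ only forces the two geodesic directions to persist, which is exactly why Definition \ref{def:acyl}(1) demands degree $\ge p^2-p+3$ at suitable vertices.) Iterating this count from the branch neighbor and invoking Proposition \ref{prop:infsubset} shows the branch prolongs within $S_g$ to an infinite ray, hence to a point $g.t\in\partial S_g\subset\Lambda$ with $|t|_p=p^m$, where $g.\bvv_{-m}$ is the branch vertex.

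The main obstacle is producing a single constant $K$, independent of $g$, that bounds the gaps between such branch points along every geodesic in $S_\Gamma$. For this I would combine the $\Gamma$-periodicity of $S_\Gamma$ with the compactness of $\core(X)=\Gamma\backslash S_\Gamma$: any length-$K$ segment of a geodesic in $S_\Gamma$ projects into the finite graph $\core(X)$, and the clause of Definition \ref{def:acyl}(1) furnishing a vertex of degree $\ge p^2-p+3$ on the geodesic between any two vertices of $\cal{F}'$ handles precisely the segments that cross between translates of the fundamental domain. Taking $K$ larger than $\diam(\core(X))$ together with the resulting controlled gap forces a degree-$(p^2-p+3)$ vertex, hence a branch point of $S_g$, into every window of $K$ consecutive vertices of $(g.0,g.\infty)$. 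Applying this at each $l$ yields a point of $T(x)$ in the required range, establishing $K$-thickness uniformly in $x$.
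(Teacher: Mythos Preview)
Your proposal is correct and takes essentially the same approach as the paper: translate $K$-thickness into the existence of branch vertices of $S_g=g.T_H\cap S_\Gamma$ along $(g.0,g.\infty)$ with gaps bounded by $K\ge\diam(\core(X))+1$, use the pigeonhole degree count at vertices with $\deg_{S_\Gamma}\ge p^2-p+3$ to locate such branches, and then extend each branch to an infinite ray in $S_g$ via the weaker bound $\deg_{S_\Gamma}\ge p^2-p+2$. One small remark: the \emph{statement} of Proposition~\ref{prop:infsubset} does not by itself place the resulting boundary point at the prescribed branch vertex, so what you actually need for the extension step is the direct iteration you already indicated (``iterating this count''), which is precisely what the paper's proof carries out explicitly.
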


\begin{proof}
Set $K\geq\op{diam}(\core(X))+1$.
Let $x=\Gamma g$ be an element of $\RFM\subset\Gamma\backslash G$.
Since $\Gamma g \in\RFM$, we have $g.0, g.\infty\in\Lambda$ so that the geodesic $(g.0,g.\infty)$ is in $S_{\Gamma}$.
By high-branchedness of $\Gamma$, there is a sequence $\{\bv_{i_{j}}^{*}\}_{j=-\infty}^{\infty}$ in the geodesic $(0,\infty)$ such that $|i_{j}-i_{j+1}|\leq K$ for every $j$ and
$$
	\left(N_{1}^{g.T_{H}}(g.\bv_{i_{j}}^{*})-\{g.\bv_{{i_{j}}+1}^{*},g.\bv_{i_{j}-1}^{*}\}\right)\cap S_{\Gamma}\neq\varnothing,
$$
where
$
	N_{1}^{g.T_{H}}(\bv)=\{\bw\in{\bf{V}}(g.T_{H}):d(\bv,\bw)=1\}
$
is the 1-neighborhood of $\bv$ in $g.T_{H}$.
Thus, there exists some $t_{j}\in(\zp/p\zp)^{\xx}$ such that
{
$$
	g.\bv(t_{j}p^{i_{j}})\in
	\left(N_{1}^{g.T_{H}}(g.\bv_{i_{j}}^{*})-\{g.\bv_{i_{j}+1}^{*},g.\bv_{i_{j}-1}^{*}\}\right)\cap
	S_{\Gamma}.
$$
}
Since {$g.\bv(t_{j}p^{i_{j}})\in S_{\Gamma}$}, by high-branchedness again, we have $t_{j+1}\in(\zp/p\zp)^{\xx}$ such that
{
$$
	g.\bv(t_{j}p^{j}+t_{j+1}p^{i_{j}+1})\in
	N_{1}^{g.T_{H}}
	\left(
	g.\bv(t_{j}p^{j})
	\right)
	\cap
	S_{\Gamma}.
$$
}
We inductively have $t_{j+N-1}\in(\zp/p\zp)^{\xx}$ such that 
{
$$
	g.\bv\left(\sum_{n=0}^{N-1}t_{j+n}p^{i_{j}+{n}}\right)\in
	N_{1}^{g.T_{H}}
	\left(
	g.\bv\left(\sum_{n=0}^{N-2}t_{j+n}p^{i_{j}+{n}}\right)
	\right)
	\cap
	S_{\Gamma}.
$$
}
Define
$
	\al_{i_{j}}=\sum_{n=0}^{\infty}t_{j+n}p^{i_{j}+n}
$
so that $\al_{i_{j}}\in\qp^{\xx}$.
We just constructed a geodesic ray $g.[\bv_{i_{j}}^{*},\al_{i_{j}})$ containing the vertices $g.\bv\left(\sum_{n=0}^{N-1}t_{j+n}p^{i_{j}+{n}}\right)$'s, which converge to $g.\al_{i_{j}}.$
Consequently, $gu_{\al_{i_{j}}}.0=g.\al_{i_{j}}\in\partial S_{\Gamma}=\Lambda$.
It is obvious that
$gu_{\al_{i_{j}}}.\infty=g.\infty$, thus $\Gamma gu_{\al_{i_{j}}}\in\RFM$.
Furthermore, we have $\pn{\al_{i_{j}}}=p^{-i_{j}}$ and $|i_{j}-i_{j+1}|\leq K$.
Thus,
$$
	T'(x):=\{\al_{i_{j}}\in\qp:\Gamma gu_{\al_{i_{j}}}\in\RFM, j\in\z\}
$$
is $K$-thick.
Since $T'(x)$ is a subset of $T(x)$, the set $T(x)$ is also $K$-thick.
\end{proof}

We remark that a converging sequence of $K$-thick sets for a fixed $K$ converges to a $K$-thick set. 
We will follow Section 8 of \cite{MMO} for the following lemma.
However, we will use ultrametric properties of non-Archimedean local fields, which enable us to obtain the exact constant $p^{-Kd}$ in the lower bound in the following lemma.

\begin{lemma}\label{lem:poly_bound}
Let $K>1$, $d\in\N$ and let $T$ be a $K$-thick set. 
Fix a polynomial $f\in\zpo[t]$ of degree $d$. 
For any large enough ball $B_{m}=p^{-m}\zp$ with $m\gg0$,
the following inequality holds:
$$
	p^{-Kd}\cdot\max_{t\in B_{m}}\pn{f(t)}\leq\max_{t\in T\cap B_{m}}\pn{f(t)}.
$$
\end{lemma}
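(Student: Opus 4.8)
The plan is to lean on the ultrametric inequality, which forces a polynomial to be governed by a single monomial as soon as its argument is large, together with the fact that $K$-thickness deposits a point of $T$ in the outermost annulus of each ball $B_m$. Write $f(t)=\sum_{i=0}^{d}c_i t^{i}$ with $c_i\in\zpo$ and $c_d\neq 0$, since $f$ has degree exactly $d$. All the work is to compare the two maxima by pinning down where $\pn{f}$ is large and then locating a point of $T$ there.

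First I would evaluate the left-hand side. For $t\in B_m=p^{-m}\zp$ we have $\pn{t}\le p^{m}$, so $\pn{f(t)}\le\max_i\pn{c_i}\,p^{mi}$; and once $m$ is large enough that $p^{m(d-i)}>\pn{c_i}/\pn{c_d}$ for every $i<d$, the leading monomial strictly dominates on the sphere $\pn{t}=p^{m}$. That sphere is nonempty in $B_m$ (e.g.\ $t=p^{-m}$), so the strong triangle inequality gives the clean evaluation
$$\max_{t\in B_m}\pn{f(t)}=\pn{c_d}\,p^{md}.$$
Next I would produce a good point of $T$: applying $K$-thickness with $l=m-K$ yields some $t_0\in T$ with $p^{m-K}<\pn{t_0}\le p^{m}$, so $t_0\in T\cap B_m$ and $\pn{t_0}=p^{j}$ for an integer $j>m-K$. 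Since $\pn{t_0}$ is still large for $m\gg0$, the same domination applies at $t_0$, and combining the two gives
$$\max_{t\in T\cap B_m}\pn{f(t)}\ \ge\ \pn{f(t_0)}\ =\ \pn{c_d}\,p^{jd}\ \ge\ \pn{c_d}\,p^{(m-K)d}\ =\ p^{-Kd}\max_{t\in B_m}\pn{f(t)},$$
which is exactly the asserted bound.

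The only point requiring care—and what I regard as the crux—is making the threshold $m\gg0$ uniform: I must choose $m$ large enough that the leading term dominates both on the full sphere $\pn{t}=p^{m}$ and at the thick point $t_0$, whose norm may be as small as $p^{m-K+1}$. Because $f$ and $K$ are fixed, every comparison $p^{m-K+1}>\bigl(\pn{c_i}/\pn{c_d}\bigr)^{1/(d-i)}$ for $i<d$ holds simultaneously once $m$ exceeds an explicit constant depending only on $f$ and $K$, so no genuine uniformity obstruction arises. It is precisely this ultrametric collapse of $\pn{f}$ onto its leading monomial—rather than an Archimedean variation estimate—that produces the sharp constant $p^{-Kd}$ instead of a lossy one.
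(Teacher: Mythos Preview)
Your proof is correct and follows essentially the same approach as the paper: use the ultrametric inequality to identify $\max_{B_m}\pn{f}$ with the leading monomial $\pn{c_d}p^{md}$, then invoke $K$-thickness to locate a point of $T$ in the annulus $p^{-m}\zp\setminus p^{-(m-K)}\zp$ and bound $\pn{f}$ there from below by $\pn{c_d}p^{(m-K)d}$. If anything, your treatment of the threshold $m\gg0$ is slightly more careful than the paper's, since you explicitly ensure the leading term dominates not only on the outer sphere but also at the thick point whose norm may be as small as $p^{m-K+1}$.
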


\begin{proof}
Let 
$
	f(t)=\al_{d}t^{d}+\al_{d-1}t^{d-1}+\cdots+\al_{1}t+\al_{0},
$
where $\al_{i}\in\zpo$ with $\nu_{p}(\al_{i})=n_{i}$ for some $n_{i}\in\z$.
Since $\qpo$ is ultrametric, we have
\begin{align*}
	\pn{f(t)}=\max_{i=0,\cdots,d}\left\{\pn{\al_{d-i}}\pn{t}^{d-i}\right\}=\pn{\al_{d}}\pn{t}^{d}
\end{align*}
if $\pn{t}$ is large enough compare to $\pn{\al_{i}}$'s.
Thus, the maximum of $\pn{f(t)}$ over the set $B_{m}$ is attained at 
{\Small$\exists$\hspace{1.5pt}}$t_{0}\in p^{-m}\zp^{\xx}$ for every large enough $m$.
It follows that $\pn{f(t_{0})}=p^{-n_{d}+dm}$.

Now let $B'_{m}=p^{-m}\zp-p^{-m+K}\zp$.
By $K$-thickness of $T$, we have $T\cap B'_{m}\neq\varnothing$.
Note that $\dis\max_{t\in T\cap B_{m}}\pn{f(t)}=\max_{t\in T\cap B'_{m}}\pn{f(t)}$.
Choose $t_{1}\in T\cap B'_{m}$ such that
$$\dis\max_{t\in T\cap B'_{m}}\pn{f(t)}=\pn{f(t_{1})}.$$
Let $n_{t_{1}}:=\nu_{p}(t_{1})\in[-m,-m+K]$ so that
$
	\pn{f(t_{1})}=p^{-n_{d-s}-(d-s)n_{t_{1}}}
$
for some $s\in\{0,1,\cdots,d\}$.
By the maximality of $\pn{f(t_{0})}$ and $\pn{f(t_{1})}$, we obtain
\begin{align*}
	\max_{t\in T\cap B_{m}}\pn{f(t)}
	&=
	p^{-n_{d-s}-(d-s)n_{t_{1}}}
	\geq
	p^{-n_{d}-dn_{t_{1}}}
	\geq
	p^{-n_{d}-d(-m+K)}\\
	&=p^{-dK}\cdot \pn{f(t_{0})}=p^{-dK}\cdot\max_{t\in B_{m}}\pn{f(t)}.
\end{align*}
\end{proof}

	In the proof of the next theorem, we use the exponential map for Lie algebra over non-Archimedean local fields.

\noindent
\begin{rmk}\label{rmk3}
{\rm
Let $G':=\GLt(\qpo)$. 
The exponential map, 
$$
\exp:B_{p^{r}}(0)\subset\Lie(G')\longrightarrow B_{p^{r}}(e)\subset G',
\hspace{10pt}
\exp(\rho)=\sum_{n=0}^{\infty}\frac{\rho^{n}}{n!}
$$
is well-defined when $r\leq-1/(p-1)$.
In our case, as $\qpo$ is unramified, it is enough to set $r=-1$.
By ultrametric property, $\norme{\exp(\rho)}=\norm{\exp(\rho)-e}$ depends only on the second term in the summation, which is $\rho$ (see Section 5.4 in \cite{Ro}).
In particular, $\exp$ is an isometry on $B_{p^{r'}}(0)$ (see \cite{Bour}, Section 7 of Chapter 3, and \cite{Sch}, Section 31, 32).
}
\end{rmk}

The proofs of the following two Theorems are as in \cite{MMO}.
One needs to use $p$-adic Lie theory with Lemma \ref{lem:poly_bound} to follow their proof.
See also the detailed version \cite{JL} for details.

\begin{theorem}\label{thm:blowup}
Suppose that $g_n\rightarrow e$ in $G-\cal{AN}$ and $g_{n}'\rightarrow e$ in $G-\cal{A}\cal{N}$.
Then,
\begin{enumerate}
	\item There are $u_{t_{n}}\in \cal{U}$ and $h_n\in H$ such that after passing to a subsequence,
$u_{t_{n}}g_nh_n\rightarrow g\in \cal{V}-\{e\}.$
	\item There are $u_{t_{n}},u_{s_{n}}\rightarrow\infty$ in $\cal{U}$ such that after passing to a subsequence, $u_{t_{n}}g_nu_{s_{n}}\rightarrow g\in \cal{AV}-\{e\}.$

\end{enumerate}
Furthermore, given a sequence of $K$-thick sets $T_n\subset\qp$ and a neighborhood $G_0\subseteq B_{p^{-1}}(e)$ of $e$ in $G$, we can arrange so that $t_{n}\in T_n$ and $g\in G_0$ in both case.
\end{theorem}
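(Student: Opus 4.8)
The plan is to pull the whole problem back to the Lie algebra via the exponential map, which by Remark \ref{rmk3} is an isometry of $B_{p^{-1}}(0)\subset\Lie(G')$ onto $B_{p^{-1}}(e)$, so I write $g_n=\exp(\rho_n)$ with $\rho_n\to 0$. Fix the standard $\mathfrak{sl}_2$-basis $X_0$ (Cartan), $X_+$ (raising), $X_-$ (lowering), and use $\qpo=\qp\oplus\omg\qp$ to split $\mathfrak g=\Lie(H)\oplus\mathfrak m$, where $\Lie(H)=\qp X_0\oplus\qp X_+\oplus\qp X_-$ and $\mathfrak m=\omg\,\Lie(H)$. In this basis $\Lie(\cal U)=\qp X_+$, $\Lie(\cal V)=\omg\qp X_+$, $\Lie(\cal A)=\qp X_0$, and $\Lie(\cal A\cal N)=\qp X_0\oplus\qpo X_+$, whose transverse complement is $\omg\qp X_0\oplus\qpo X_-$. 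The hypotheses on $g_n$ (and on $g_n'$ for part (2)) are precisely what guarantee that the transverse part of $\rho_n$ has a nonzero $\omg$-component carried by the coefficients $\xi_n$ of $\omg X_0$ and $\eta_n$ of $\omg X_-$; this is the component that can be driven into $\cal V$, and I will treat separately the two subcases according to whether $\eta_n$ or $\xi_n$ dominates.

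The engine is the polynomial action of $\Ad(u_t)$ for $u_t=\exp(tX_+)$, namely $\Ad(u_t)X_+=X_+$, $\Ad(u_t)X_0=X_0-2tX_+$, and $\Ad(u_t)X_-=X_-+tX_0-t^2X_+$. For part (1) I conjugate, $u_{t_n}g_nu_{-t_n}=\exp(\Ad(u_{t_n})\rho_n)$, leaving $u_{-t_n}\in\cal U\subset H$ as part of the admissible right factor $h_n$. Reading off the $\omg$-block, the $\omg X_+$-coefficient of $\Ad(u_{t_n})\rho_n$ equals $-2t_n\xi_n-t_n^2\eta_n$ up to a term tending to $0$, while its $\omg X_0$- and $\omg X_-$-coefficients are $\xi_n+t_n\eta_n$ and $\eta_n\to 0$. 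Choosing $t_n\to\infty$ so that $-2t_n\xi_n-t_n^2\eta_n$ tends to a nonzero value $s$ while $\xi_n+t_n\eta_n\to 0$ (consistent, since the dominant relation forces $t_n\asymp\eta_n^{-1/2}$ or $t_n\asymp\xi_n^{-1}$) pins the surviving limit in $\cal V-\{e\}$. The $\Lie(H)$-components of $\Ad(u_{t_n})\rho_n$ — including those amplified by the blowup — are absorbed by the right factor $h_n\in H$, following the bookkeeping of \cite{MMO}; the content of the lemma is that this cleanup can be performed without disturbing the $\omg$-limit. Part (2) is the same computation with an \emph{independent} right parameter, $u_{t_n}g_nu_{s_n}$: releasing $s_n$ rather than tying it to $t_n$ additionally frees the diagonal $X_0$-direction, so the limit lands in $\cal A\cal V-\{e\}$.

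The quantitative refinement — arranging $t_n\in T_n$ for the prescribed $K$-thick sets while keeping the limit $g\in G_0\subseteq B_{p^{-1}}(e)$ — is where Lemma \ref{lem:poly_bound} enters, and it is the main obstacle. The relevant $\omg$-coefficients are polynomials in $t$ of degree $d\le 2$ with coefficients of size $\asymp\norme{g_n}$, so the scale at which the blowup must occur is forced. Applying Lemma \ref{lem:poly_bound} with this $d$ on a ball $B_m$ with $m\gg 0$ yields $t_n\in T_n$ at which the dominant coefficient attains its ball-maximum up to the \emph{exact} factor $p^{-Kd}$; this is precisely what simultaneously keeps the blown-up $\omg X_+$-coefficient nonzero (so $g\ne e$) and, by placing the working scale deep enough, confines it to a bounded multiplicative window inside $G_0$ (so $g\in\cal V-\{e\}$ lies in the prescribed neighborhood). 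Without the sharp ultrametric constant one would control the limit only up to an unbounded scale and could not hold it inside $G_0$. Finally, the degenerate configuration in which the transverse part of $\rho_n$ is purely real — so that $g_n$ never leaves $H$ and no $\cal V$-limit can exist — is exactly what the standing hypotheses exclude.
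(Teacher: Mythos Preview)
Your strategy---linearizing via the $p$-adic exponential of Remark~\ref{rmk3}, tracking the $\omega$-components under the polynomial $\Ad(u_t)$-action, and feeding the resulting degree-$\le 2$ polynomials into Lemma~\ref{lem:poly_bound} to select $t_n\in T_n$ at the right scale---is exactly the route the paper indicates: it gives no proof of its own and simply points to \cite{MMO} together with these two $p$-adic ingredients. So the overall plan matches.

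There is, however, a genuine gap in your final sentence. You claim the hypothesis $g_n\notin\cal{AN}$ excludes the ``purely real transverse'' configuration, but it does not. Take $g_n=\exp(c_nX_-)$ with $c_n\in\qp^{\times}$, $c_n\to 0$: then $g_n\in H\setminus\cal{AU}\subset G\setminus\cal{AN}$, yet $u_{t_n}g_nh_n\in H$ for every $t_n\in\qp$ and $h_n\in H$, so no limit in $\cal{V}\setminus\{e\}$ can exist and part~(1) fails outright. In your coordinates this is precisely $\xi_n=\eta_n=0$ with only the real $X_-$-coefficient surviving in the $\cal{AN}$-transversal; your blowup polynomial $b_n''-2t\,\xi_n-t^2\eta_n$ then collapses to $b_n''\to 0$ and nothing is produced. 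What your argument for part~(1) actually needs is $g_n\notin\cal{V}H$ (equivalently $(\xi_n,\eta_n)\neq(0,0)$ at first order), and since $\cal{AN}=\cal{V}\cal{AU}\subsetneq\cal{V}H$ this is strictly stronger than $g_n\notin\cal{AN}$. Whether this reflects an imprecision in the stated hypothesis or an implicit restriction inherited from \cite{MMO}, your sketch asserts that the written hypothesis does the job, and it does not; you should either strengthen the hypothesis you invoke or supply a separate argument for the case $g_n\in\cal{V}H\setminus\cal{AN}$.
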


\subsection{Minimality}
Recall that
$$
	\cal{U}=\left\{u_{a}=\begin{pmatrix}
	1&a\\0&1
	\end{pmatrix}: a\in\qp \right\},\ 
	\cal{V}=\left\{u_{\omg a}=\begin{pmatrix}
	1&\omega a\\0&1
	\end{pmatrix}: a\in\qp \right\}.
$$
\begin{dfn}\label{def:min}
Let $W$ be a closed subgroup of $G$. Let $Y\subset\FM$ be non-empty and $Y^*=Y\cap\RFM$. 
Then, we say that
\begin{enumerate}
    \item $Y$ is \emph{W-minimal} if $\cl{yW}=Y$ for any $y\in Y$.
    \item $Y$ is \emph{relatively W-minimal} if $Y^*$ is non-empty, and $\cl{yW}=Y$ for all $y\in Y^*$.
\end{enumerate}
\end{dfn}

\begin{prop}\label{prop:meet_RFM}
Every closed $W$-invariant set intersecting $\RFM$ contains a relatively $W$-minimal set.
\end{prop}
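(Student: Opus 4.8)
The plan is to run a standard Zorn's lemma argument on the poset of candidate subsets, where the one genuinely non-formal ingredient is the compactness of $\RFM$ recorded after Definition \ref{def:rfx}.

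First I would introduce the family
$$
\mathcal{S}=\{Z\subseteq Y: Z\text{ is nonempty, closed, } W\text{-invariant, and } Z\cap\RFM\neq\varnothing\},
$$
partially ordered by inclusion. By hypothesis $Y$ itself is closed, $W$-invariant, and meets $\RFM$, so $Y\in\mathcal{S}$ and the family is nonempty. To apply Zorn's lemma I would verify that every chain $\{Z_\alpha\}$ in $\mathcal{S}$ admits a lower bound in $\mathcal{S}$. The natural candidate is $Z=\bigcap_\alpha Z_\alpha$, which is evidently closed and $W$-invariant as an intersection of such sets. The crucial step is to show $Z\cap\RFM\neq\varnothing$: the sets $Z_\alpha\cap\RFM$ form a nested family of nonempty closed subsets of the compact space $\RFM$, so by the finite intersection property $\bigcap_\alpha(Z_\alpha\cap\RFM)=Z\cap\RFM$ is nonempty. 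Thus $Z\in\mathcal{S}$ is a lower bound for the chain, and Zorn's lemma produces a minimal element $Z_0\in\mathcal{S}$.

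It then remains to check that $Z_0$ is relatively $W$-minimal in the sense of Definition \ref{def:min}. By construction $Z_0^*:=Z_0\cap\RFM\neq\varnothing$, so I fix an arbitrary $z\in Z_0^*$ and consider the orbit closure $\overline{zW}$. Since $Z_0$ is closed and $W$-invariant we have $zW\subseteq Z_0$, hence $\overline{zW}\subseteq Z_0$; moreover $\overline{zW}$ is itself closed and $W$-invariant. Because the identity lies in $W$, the point $z$ belongs to $zW\subseteq\overline{zW}$, so $z\in\overline{zW}\cap\RFM$, which shows $\overline{zW}\in\mathcal{S}$. Minimality of $Z_0$ now forces $\overline{zW}=Z_0$, and since $z\in Z_0^*$ was arbitrary this is exactly the statement that $Z_0$ is relatively $W$-minimal.

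The only step requiring any care is the verification that the chain intersection still meets $\RFM$; I expect this to be the main (and indeed the sole) obstacle, and it is resolved precisely by the compactness of $\RFM$, which in turn comes from the fact that its projection to $X$ is the finite graph $\core(X)$. Everything else is the formal skeleton of the usual existence-of-a-minimal-set argument.
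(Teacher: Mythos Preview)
Your argument is correct and is precisely the Zorn's lemma argument the paper has in mind; the paper simply asserts that the proposition is a consequence of Zorn's lemma without spelling out the details. Your identification of the compactness of $\RFM$ as the key input (for showing the chain intersection still meets $\RFM$) is exactly right.
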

This is a consequence of Zorn's lemma.
Before we state Proposition \ref{prop:AV density}, we define the following.
\begin{dfn}	
We say that a subset $S$ is a \emph{$\zp$-parameter semigroup of $\GLt(\qpo)$} 
if there is a vector $\zeta\in\Lie(\GLt(\qpo))$ such that 
$$
	S=\{\exp(\alpha\zeta):\alpha\in\zp\}.
$$
Call $S$ a $\zp$-parameter semigroup of $G$ if it is the image of a $\zp$-parameter semigroup of $\GLt(\qpo)$ under the projection to $G$.
\end{dfn}

Now we define the following subsets of $\cal{V}$ and $\cal{A}$.
For fixed $\vep=p^{-k}$ for some $k\in\N$, let
\begin{align*}
	\cal{V}_\vep=\left\{u_{b}: b\in\qpo,\pn{b}\leq\vep\right\}, 
	\hspace{5pt}
	\cal{A}_\vep=\{\diag(a,1): a\in 1+p^k\zp\}.
\end{align*}

\begin{prop} \label{prop:AV density}
For any relatively $\cal{U}$-minimal set $Y\subset\FM$, there is $L\subset G$ such that $L$ is dense in one of $\zp$-parameter semigroup 
$\cal{V}_{\vep}, \cal{A}_{\vep}, v\cal{A}_{\vep}v^{-1}$ for some $v\in\cal{V}-\{e\}$ for some $\vep\leq p^{-1}$, 
and $YL\subset Y$.
\end{prop}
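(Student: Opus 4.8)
The plan is to reduce the statement to producing a single nontrivial element $g$ lying in $\cal{V}$, in $\cal{A}$, or in a $\cal{V}$-conjugate of $\cal{A}$, and satisfying $Yg\subseteq Y$; the $\zp$-parameter semigroup $L$ is then recovered as the closure of the cyclic semigroup $\{g^{n}:n\in\z_{\geq0}\}$. Indeed, if $g=u_{\omega b}\in\cal{V}$ with $\pn{b}=\vep$, then $\overline{\{g^{n}\}}=\{u_{\omega t}:\pn{t}\le\vep\}=\cal{V}_{\vep}$, since $\overline{\z_{\geq0}}=\zp$; and if $g=\diag(a,1)$ with $a\in1+p^{k}\zp^{\times}$, then $\overline{\{g^{n}\}}=\cal{A}_{\vep}$ with $\vep=p^{-k}$. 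By Remark \ref{rmk3}, $\exp$ is an isometry on $B_{p^{-1}}(0)$, so keeping $g$ inside a small neighborhood $G_{0}\subseteq B_{p^{-1}}(e)$ guarantees $\vep\le p^{-1}$ and that these closures are honest $\zp$-parameter semigroups. As $Y$ is closed and $Yg\subseteq Y$, this yields $YL\subseteq Y$.

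To produce $g$, fix $y\in Y^{*}$. Since $\cal{U}\cong(\qp,+)$ admits no nontrivial cocompact discrete subgroup, the orbit $y\cal{U}$ cannot be closed inside the compact set $\RFM$; combining this with relative $\cal{U}$-minimality ($\overline{y\cal{U}}=Y$ for $y\in Y^{*}$) and the $K$-thickness of the return-time set $T(y)$ from Proposition \ref{prop:k-thick}, the orbit recurs into $Y^{*}$ through points lying on $\cal{U}$-orbits \emph{distinct} from that of $y$. This produces sequences $y_{n}\in Y^{*}$ with $y_{n}\to y$ and $g_{n}\to e$, $g_{n}\notin\cal{U}$, such that $y_{n}g_{n}\in Y$. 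I then split according to whether infinitely many $g_{n}$ lie in the Borel subgroup $\cal{A}\cal{N}$ or escape it.

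\emph{Borel case.} If $g_{n}\in\cal{A}\cal{N}$, write $g_{n}=a_{n}u_{\omega b_{n}}u_{c_{n}}$ with $a_{n}\to e$ in $\cal{A}$ and $b_{n},c_{n}\to0$ in $\qp$. Since $Y\cal{U}=Y$, we have $Yu_{c_{n}}^{-1}=Y$, so from $y_{n}g_{n}\in Y$ I get the cleaner return $y_{n}a_{n}u_{\omega b_{n}}\in Y$; as $g_{n}\notin\cal{U}$, at least one of $a_{n},u_{\omega b_{n}}$ is nontrivial. Rescaling by the appropriate power and passing to a limit of the dominant component then yields a nontrivial $g\in\cal{V}$ or $g\in\cal{A}$ with $Yg\subseteq Y$, which feeds into the first paragraph. \emph{Escape case.} If $g_{n}\notin\cal{A}\cal{N}$, I feed the sequence into Theorem \ref{thm:blowup}, attaching the $K$-thick sets $T_{n}=T(y_{n})$ so that the auxiliary parameters satisfy $t_{n}\in T_{n}$ and the limit lies in a prescribed small $G_{0}$. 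Part (1) gives $u_{t_{n}}g_{n}h_{n}\to g\in\cal{V}-\{e\}$, and part (2) gives $g\in\cal{A}\cal{V}-\{e\}$; in the latter an element of $\cal{A}\cal{V}$ is conjugate by some $v\in\cal{V}-\{e\}$ into $\cal{A}$, which is the origin of the alternative $v\cal{A}_{\vep}v^{-1}$.

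The main obstacle is reading off $Yg\subseteq Y$ from the escape-case blowup limit. The factor $u_{t_{n}}\in\cal{U}$ is harmless because $Y\cal{U}=Y$, but the factors $h_{n}\in H$ do not preserve $Y$ under right multiplication on $\Gamma\backslash G$. I resolve this by passing to the circle picture $\FM/H=\Gamma\backslash G/H=\Gamma\backslash\cal{C}$ of Remark \ref{rmk:coset}, in which right multiplication by $H$ is trivial and $\cal{V}$ descends compatibly, so the $h_{n}$ disappear; tracking a fixed base point through the products and using compactness of $Y^{*}$ together with discreteness of $\Gamma$ then pins the limit inside $Y$, giving $Yg\subseteq Y$. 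I expect this $H$-absorption via the $G/H$ duality to be the crux, whereas the Borel case and the passage from a single stabilizing element to the $\zp$-parameter semigroup follow \cite{MMO} routinely, using the $p$-adic estimate of Lemma \ref{lem:poly_bound} and the isometry property in Remark \ref{rmk3}.
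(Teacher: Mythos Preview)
Your overall architecture matches the paper's: produce a single nontrivial $q\in\cal{A}\cal{V}$ close to $e$ with $Yq\subset Y$, take $L=\{q^{n}:n\ge0\}$, and use $\overline{\z_{\ge0}}=\zp$ together with Remark~\ref{rmk3} to identify $\overline{L}$ as $\cal{V}_{\vep}$, $\cal{A}_{\vep}$, or $v\cal{A}_{\vep}v^{-1}$ according to whether the $\cal{A}$-part, the $\cal{V}$-part, or neither vanishes. That trichotomy is exactly the paper's Step~3 computation.

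The genuine gap is your escape-case resolution. The right $\cal{V}$-action on $\Gamma\backslash G$ does \emph{not} descend to $\Gamma\backslash G/H$: the subgroup $\cal{V}$ does not normalize $H$, so for $v\in\cal{V}$ the assignment $yH\mapsto yvH$ is not well defined, and your sentence ``$\cal{V}$ descends compatibly'' is false. Passing to the circle picture therefore cannot absorb the factors $h_{n}\in H$ while retaining the information $y'g\in Y$ inside $\FM$; at best you learn that $y'gH$ meets the image of $Y$ in $\FM/H$, which says only that $y'gh\in Y$ for \emph{some} $h\in H$. Discreteness of $\Gamma$ is irrelevant here because the ambiguity is a full $H$-coset on the right, not a $\Gamma$-coset on the left.

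The fix, which is what the paper does in Step~2 (following \cite[Lemma~9.5]{MMO}), is to abandon part~(1) of Theorem~\ref{thm:blowup} entirely and use only part~(2): $u_{t_{n}}g_{n}u_{s_{n}}\to q\in\cal{A}\cal{V}-\{e\}$ with $t_{n}$ chosen in the $K$-thick set $-T(y)$. Now the right factor $u_{s_{n}}$ lies in $\cal{U}$, so $Y\cal{U}=Y$ kills it for free: from $yg_{n}\in Y$ one gets $yg_{n}u_{s_{n}}=(yu_{-t_{n}})(u_{t_{n}}g_{n}u_{s_{n}})\in Y$, and since $yu_{-t_{n}}\in Y^{*}\subset\RFM$ is compact, a subsequential limit gives $y'q\in Y$ with $y'\in Y^{*}$. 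One then upgrades to $Yq\subset Y$ using that $q\in\cal{A}\cal{V}$ normalizes $\cal{U}$: $Yq=\overline{y'\cal{U}}\,q=\overline{y'q\,\cal{U}}\subset Y$. Your Borel case is this same mechanism with the blowup step skipped (any small enough $a_{n}u_{\omega b_{n}}\in\cal{A}\cal{V}-\{e\}$ already serves as $q$; no ``rescaling'' is needed, and the $v\cal{A}_{\vep}v^{-1}$ alternative can already occur here when both components are nontrivial).
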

 
\begin{proof}
Let $Y$ be a relative $\cal{U}$-minimal set. 
Recall that $Y^{*}=Y\cap\RFM$. 
For Step 1 and Step 2, the proofs are as in \cite[Theorem 9.4 \& Lemma 9.5]{MMO}. 
See also the detailed version \cite{JL}.
Step 3 is also similar, however, there is a subtle difference due to $p$-adic Lie theory.

\noindent{\bf{Step 1.}}
For any $y\in Y^*$, there is a sequence $g_n\rightarrow e$ in $G-\cal{U}$ such that $yg_n\in Y$ for all $n\in\N$. 

\noindent{\bf{Step 2.}}
There is a sequence $q_{n}\rightarrow e$ in $\cal{A}\cal{V}$ such that $Yq_{n}\subset Y$.

Among $\{q_{n}\}$ obtained by Step 2, choose $q$
such that $\norme{q}\leq p^{-1}$. 
Regard $q$ as an element of $\GLt(\qpo)$.
By Remark \ref{rmk3}, there exists a vector $\zeta\in\Lie(\GLt(\qpo))\cap B_{p^{-1}}(0)$ such that $\exp(\zeta)=q$ in $\GLt(\qpo)$.

We remark that if $Yq\subset Y$, then $Yq^{n}\subset Y$ for any $n\in\N$. 

\vspace{0.1in}

\noindent{\bf{Step 3.}} The element $q\in\cal{A}\cal{V}$ defined above is an element of one of 
$\cal{V}_{\vep}$,
$\cal{A}_{\vep}$, and
$v\cal{A}_{\vep}v^{-1}$
for some $v\in \cal{V}-\{e\}$ and $\vep=\norme{q}$.
\begin{proof}[Proof of Step 3.]
Since $q=\exp(\zeta)$ is an element of $\cal{A}\cal{V}$, we have
$$
\zeta=
	\begin{pmatrix}
		a&b\\0&d
	\end{pmatrix}
\in\Lie(\GLt(\qpo)),
$$
where $b\in\omega(p\zp)$ and $\norm{\zeta}=\max\{\pn{a},\pn{b},\pn{d}\}\leq p^{-1}$. 
Let $\vep=\pn{a}$ if $a\neq0$, and
$\vep=\pn{b}$ if $a=0$.
Since we deal with the projective linear group, we may let $d=0$.

When $a=0$, one can show that $\exp(\zeta)\in\cal{V}_{\vep}$.
Similarly, when $b=0$, we have $\exp(\zeta)\in\cal{A}_{\vep}$.
Lastly, if $a,b\neq0$, then note that
$
	\zeta^{n}=
	\begin{pmatrix}
	a^{n}&a^{n-1}b\\0&0
	\end{pmatrix}
$
for $n\geq 1$. 
We then have
$
	\exp(\zeta)
	=v\cdot\op{diag}(\exp(a),1)\cdot v^{-1},
$
where 
$
v=u_{-a^{-1}b}\in\cal{V}
$
Hence, $\exp(\zeta)\in v\cal{A}_{\vep}v^{-1}$.

\end{proof}

To finish the proof, let $L$ be $\langle q\rangle=\{q^{n}:n\in\N\}$. 
Note that $q^{n}=(\exp(\zeta))^{n}=\exp(n\zeta)$ for every $n\in\N$.
Since $\N$ is dense in $\zp$, the set $\{n\zeta:n\in\N\}$ is dense in 
$\zp\zeta$. 
Note that $\zp\zeta$ is either
$
	\left\{
	\begin{pmatrix}
		0&\beta\\0&0
	\end{pmatrix}
	: \pn{\beta}\leq\vep
	\right\}
$
with $\vep=\pn{b}$, or
$
	\left\{
	\begin{pmatrix}
		\al&0\\0&0
	\end{pmatrix}
	: \pn{\al}\leq\vep
	\right\},
	\left\{
	\begin{pmatrix}
		\al a& \al b \\0&0
	\end{pmatrix}
	: \pn{\al}\leq\vep
	\right\}
$
with $\vep=\pn{a}$.
Hence, $\exp(\zp\zeta)$ is one of $\cal{V}_{\vep},\cal{A}_{\vep},v\cal{A}_{\vep}v^{-1}$ by Claim 3.
Thus, $L$ is dense in the one of the $\zp$-parameter semigroups $\cal{V}_{\vep}, \cal{A}_{\vep}, v\cal{A}_{\vep}v^{-1}$ for some $v\in\cal{V}$ and $YL\subset Y$ by construction.
Thus Proposition follows.
\end{proof}

\begin{rmk}{\rm
Note that the domain of $\exp$ is limited to $B_{r}(0)\in\zpo$ for $r<1$ in our case. 
The unipotent subgroups $\cal{U}$ and $\cal{V}$ are the only \emph{1-parameter subgroups} of $G$ (see Section 2.1 of \cite{BQ}).
We have defined \emph{$\zp$-parameter semigroup} to avoid such difficulty.
}
\end{rmk}

\begin{dfn}
Let $z\in\FM$ such that $zH$ is not closed and $zH\cap\RFM\neq\varnothing$.
We say that such an orbit $zH$ is chaotic. 
\end{dfn}

	Our goal is to analyze $\cl{zH}$ when $zH$ is chaotic.
By Proposition \ref{prop:meet_RFM}, there is a relatively $\cal{U}$-minimal set $Y\subset\cl{zH}$.
Applying Proposition \ref{prop:AV density} to $Y$, we obtain  $L\subset G$ such that $YL\subset Y$ and is dense in one of $\cal{V}_{\vep}, \cal{A}_{\vep}, v\cal{A}_{\vep}v^{-1}$ for some $v\in\cal{V}-\{e\}$ for some $\vep\leq p^{-1}$.
If $L$ is dense in either $\cal{V}_\vep$ or $v\cal{A}_{\vep}v^{-1}$,  then $L$ has elements with (1,2)-entry in $\qpo-\qp$.


Theorem \ref{thm:transl} is used to prove Corollary \ref{cor:dense}.
We will use Corollary \ref{cor:dense} to prove Proposition \ref{prop:goodcircle}.
The proof of Theorem \ref{thm:transl} requires Proposition \ref{prop:AV density} and it is as in \cite[Theorem 9.1]{MMO}.

%
%

\begin{thm}\label{thm:transl}
Let $zH$ be chaotic and let $Z$ be its closure.
There exist $y\in Z$ and a subset $L\subset G$ such that $yL\subset Z$ and $\cl{L}$ is a $\zp$-parameter semigroup intersecting $H$ trivially.
\end{thm}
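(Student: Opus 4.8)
The plan is to feed the apparatus of this subsection into Proposition~\ref{prop:AV density} and then decide, among its three possible outputs, which one yields a semigroup transverse to $H$. Since $zH$ is chaotic, $Z=\cl{zH}$ is closed, right $H$-invariant, and meets $\RFM$; as $\cal{U}\subset H$ it is in particular a closed $\cal{U}$-invariant set meeting $\RFM$, so Proposition~\ref{prop:meet_RFM} produces a relatively $\cal{U}$-minimal set $Y\subset Z$. Applying Proposition~\ref{prop:AV density} to $Y$ gives $L\subset G$ with $YL\subset Y\subset Z$ and $L$ dense in one of $\cal{V}_\vep$, $\cal{A}_\vep$, or $v\cal{A}_\vep v^{-1}$ with $v\in\cal{V}-\{e\}$. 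Fixing $y\in Y^{*}=Y\cap\RFM$, closedness of $Y$ yields $y\,\cl{L}\subset\cl{yL}\subset Y\subset Z$, so it remains only to recognise which of the three semigroups meets $H$ trivially.

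First I would clear the two transverse cases. If $\cl{L}=\cal{V}_\vep$, then every nonidentity element is $u_{\beta}$ with $\beta\in\omega\qp-\{0\}$, so its $(1,2)$-to-$(1,1)$ ratio lies in $\qpo-\qp$; this ratio is invariant under scaling of the representative, so no such element lies in $H=\PGLt(\qp)$, and $\cal{V}_\vep\cap H=\{e\}$. If $\cl{L}=v\cal{A}_\vep v^{-1}$ with $v=u_{c}$, $c\in\omega\qp-\{0\}$, a direct conjugation gives the elements $\begin{pmatrix} a & c(1-a)\\ 0 & 1\end{pmatrix}$ with $a\in 1+p^{k}\zp$; for $a\neq 1$ the off-diagonal entry $c(1-a)$ again lies in $\qpo-\qp$, whence this semigroup also meets $H$ only at the identity. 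In both cases $(y,L)$ is as required.

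The main obstacle is the remaining case $\cl{L}=\cal{A}_\vep=\{\diag(a,1):a\in 1+p^{k}\zp\}$, which sits inside $H$ and so cannot be used as is. Here my plan is to promote the diagonal invariance into a genuinely transverse direction, following \cite[Theorem~9.1]{MMO}: the set $Y$ is now invariant under $\cal{U}$ and the compact group $\cal{A}_\vep$, and since $\cal{A}$ fixes $0$ and $\infty$ it preserves $\RFM$, so a point $y\in Y^{*}$ recurs within the compact set $\RFM$ under this extra invariance. Combining such recurrence with the $K$-thickness of return times from Proposition~\ref{prop:k-thick}, I would manufacture a sequence $g_{n}\to e$ in $G-\cal{A}\cal{N}$ with $yg_{n}\in Y$, and then apply the unipotent blowup, Theorem~\ref{thm:blowup}(1); after absorbing the auxiliary factors (which lie in $\cal{U}\subset H$ and in $H$, hence preserve $Y$ and $Z$) this extracts a nontrivial limit $g\in\cal{V}-\{e\}$ with $yg\in Z$. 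Taking $L=\{g^{n}:n\in\N\}$, whose closure is a $\zp$-parameter semigroup contained in $\cal{V}_\vep$, returns us to the transverse case and closes the argument. I expect the delicate point to be exactly the production of the non-triangular returning sequence $g_{n}\to e$ out of the purely diagonal invariance: this is where the non-closedness of $zH$ must enter essentially, for otherwise the $\cal{A}_\vep$-invariance could remain trapped in the upper-triangular part of $H$ and never escape.
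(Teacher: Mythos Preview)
Your proposal takes essentially the same approach as the paper, which gives no proof beyond invoking Proposition~\ref{prop:AV density} and pointing to \cite[Theorem~9.1]{MMO}. Your handling of the $\cal{V}_\vep$ and $v\cal{A}_\vep v^{-1}$ cases is correct and in fact more explicit than what the paper writes; for the residual $\cal{A}_\vep$ case you correctly isolate the crux---manufacturing a return sequence $g_n\to e$ outside $\cal{A}\cal{N}$---and, like the paper, leave its resolution to \cite{MMO}.
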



\begin{rmk}
{\rm
Let $\vep=p^{-k}$.
By Theorem \ref{thm:transl}, $\cl{L}$ is $\cal{V}_{\vep}$ or $v\cal{A}_{\vep}v^{-1}$ for some $v\in\cal{V}$.
Denote the set of the (1,2)-entries of elements of $L$ by $B_{1,2}$.
When $\cl{L}=\cal{V}_{\vep}$, we know that $B_{1,2}$ is dense in $\omg p^{k}\zp$.
When $\cl{L}=v\cal{A}_\vep v^{-1}$, we remark that $B_{1,2}$ is dense in $\omg p^{k'}\zp$ for some $k'>k$.

Let
	$
		v=u_{\omg b}	
	$
for some $b\in\qp$.
Let $d_{a}=\operatorname{diag}(a,1)\in\cal{A}_\vep\cap v^{-1}Lv$, where $a\in 1+p^{n_{a}}\zp^{\xx}$ for some $n_{a}\geq k$.
We have,
	$
		vd_{a}v^{-1}=\begin{pmatrix} a&\omega b(1-a)\\0&1 \end{pmatrix}.
	$
Hence if we let $b=p^{n_b}\beta$ for some $\beta\in\zp^\times$, then 
$
	b(a-1)=p^{n_b+n_{a}}c
$
for some $c\in\zp^\times$. 
Since $v^{-1}\cl{L}v=\cal{A}_{\vep}$,
for any $n\geq k$, there exists $s\in 1+p^{n}\zp^{\xx}$ such that $d_{s}\in\cal{A}_\vep\cap v^{-1}Lv$.
By varying $n\geq k$,
we have $B_{1,2}$ is dense in $\omega p^{n_{b}+n_{a}}\zp\subset\omega\zp$.
}
\end{rmk}

\begin{Cor}\label{cor:dense}
Let $zH$ be a chaotic orbit and $Z=\cl{zH}$.
	There exists $L'\subset G$ such that $L'$ is dense in $\cal{V}$ and $z'L'\subset Z$ for some $z'\in\RFM$, consequently, $z'\cal{V}\subset Z$.
\end{Cor}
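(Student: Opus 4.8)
The plan is to start from the relatively $\cal{U}$-minimal set $Y\subset Z$ and the set $L$ furnished by the discussion preceding the statement, to pick a base point $z'$ lying in $\RFM$, and then to enlarge the scale of the available $\cal{V}$-translations from a bounded ball to all of $\cal{V}$ by flowing with the diagonal group and using compactness of $\RFM$. By Theorem \ref{thm:transl}, $\cl{L}$ is a $\zp$-parameter semigroup meeting $H$ trivially; since $\cal{A}_{\vep}\subset H$, the case $\cl{L}=\cal{A}_{\vep}$ is excluded, so $\cl{L}$ equals either $\cal{V}_{\vep}$ or $v\cal{A}_{\vep}v^{-1}$ with $v=u_{\omega b}\in\cal{V}-\{e\}$. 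Because $Y$ is relatively $\cal{U}$-minimal, $Y^{*}=Y\cap\RFM\neq\varnothing$; I fix $z'\in Y^{*}$. From $YL\subset Y\subset Z$ and closedness of $Z$ we get $z'\cl{L}\subset Z$. The first goal is to reduce both cases to the single statement that $z'\cal{V}_{\vep'}\subset Z$ for some $\vep'=p^{-m}$.

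When $\cl{L}=\cal{V}_{\vep}$ this reduction is immediate with $\vep'=\vep$. When $\cl{L}=v\cal{A}_{\vep}v^{-1}$, I would use the factorization $v\,d_a\,v^{-1}=u_{\omega b(1-a)}\,d_a$, valid for $d_a=\diag(a,1)$ with $a\in 1+p^{k}\zp$. Since $d_a\in\cal{A}_{\vep}\subset H$ and $Z$ is right $H$-invariant (as $Z=\cl{zH}$), the membership $z'\,u_{\omega b(1-a)}\,d_a\in Z$ yields $z'\,u_{\omega b(1-a)}\in Z$ after right multiplication by $d_a^{-1}$. As $a$ runs over a dense subset of $1+p^{k}\zp$, the entry $\omega b(1-a)$ runs over a dense subset of $\omega b\,p^{k}\zp$, so taking closures gives $z'\cal{V}_{\vep'}\subset Z$ with $\vep'=\pn{b}p^{-k}$. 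This stripping-off of the diagonal factor via $\cal{A}\subset H$ is the key point of the second case.

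The main step is the scale blow-up. I would exploit that $\cal{A}\subset H$, so $Z$ is right $\cal{A}$-invariant, together with the conjugation identity $\cal{V}_{\vep'}\,d_{p^{n}}=d_{p^{n}}\,\cal{V}_{p^{n}\vep'}$. From $z'\cal{V}_{\vep'}\subset Z$ and $Z\,d_{p^{n}}=Z$ I obtain $z'\,d_{p^{n}}\,\cal{V}_{p^{n}\vep'}\subset Z$ for every $n\geq0$, where the radius $p^{n}\vep'=p^{n-m}\to\infty$. Crucially, $d_{p^{n}}$ fixes the endpoints $z'.0,z'.\infty\in\Lambda$, so each $z'\,d_{p^{n}}$ lies in $\RFM$; since $\RFM$ is compact, a subsequence $z'\,d_{p^{n_j}}$ converges to some $z''\in\RFM$. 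For any fixed $u=u_{\omega s}\in\cal{V}$ one has $u\in\cal{V}_{p^{n_j-m}}$ once $j$ is large, hence $z'\,d_{p^{n_j}}u\in Z$, and letting $j\to\infty$ gives $z''u\in Z$ by closedness of $Z$. As $u\in\cal{V}$ was arbitrary, $z''\cal{V}\subset Z$; taking $L'=\cal{V}$ (trivially dense in $\cal{V}$) and renaming $z''$ as $z'$ yields the corollary, the clause $z'\cal{V}\subset Z$ being exactly what was proved.

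The principal obstacle is the last paragraph: enlarging a bounded $\cal{V}$-ball to the whole of $\cal{V}$. Unlike the Archimedean case, $\cal{V}$ carries no genuine continuous one-parameter structure along which one can integrate, so the enlargement must be carried out discretely through the diagonal translates $d_{p^{n}}$, and its success hinges on the recurrence supplied by the compactness of $\RFM$ together with $\cal{A}$ preserving $\RFM$. Some care is needed to ensure the limiting base point $z''$ stays in $\RFM$, so that it remains a legitimate starting point for the subsequent minimality and density arguments, which is why I would track the geodesic endpoints $z'.0,z'.\infty$ throughout.
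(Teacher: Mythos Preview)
Your proposal is correct and follows essentially the same route as the paper. Both arguments reduce the $v\cal{A}_{\vep}v^{-1}$ case to the $\cal{V}_{\vep'}$ case via the factorization $vd_av^{-1}=u_{\omega b(1-a)}d_a$ together with right $H$-invariance of $Z$, then blow up the bounded $\cal{V}$-ball to all of $\cal{V}$ by conjugating with diagonal elements and using compactness plus $\cal{A}$-invariance of $\RFM$ to extract a limiting base point. Your presentation is arguably slightly cleaner in two respects: you take the closure $z'\cal{V}_{\vep'}\subset Z$ before the blow-up (so the limit argument involves only the convergent base points, not simultaneously a dense sequence in $L$), and you make explicit why each $z'd_{p^n}$ remains in $\RFM$ via preservation of the endpoints $z'.0,z'.\infty\in\Lambda$.
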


\begin{proof}
Let a $\omg\qp$-unipotent matrix $u_{\omg b}$ be as in (\ref{matrices}), and $L$ be as in Theorem \ref{thm:transl}.

\noindent{\bf Case 1.} $L$ is dense in $\cal{V}_{\vep}$ for some $\vep=p^{-k}$: 
Let $d=\op{diag}(p^{-1},1)$.
For every $n\in\z,b\in p^{k}\zp$,
$
	d^{n}u_{\omg b}d^{-n}=
	\begin{pmatrix}
		1 & \omega (p^{-n}b) \\ 0 & 1
	\end{pmatrix}.
$
Let $L'=\bigcup_{n\in\z} d^{n}Ld^{-n}$ so that $L'$ is dense in $\cal{V}$.
Let $y$ be as in Theorem \ref{thm:transl}.
Note that $yd^{-n}\in Z$ for all $n\in\z$ since $Z$ is right $H$-invariant.
Moreover, $yd^{-n}\rightarrow z'$ for some $z'\in\RFM$, by passing to a subsequence if necessary, since $\RFM$ is compact $\cal{A}$-invariant.
Hence,
$$
	yLd^{-n}=yd^{-n}\cdot d^{n}Ld^{-n}\subset Z=\cl{zH},
$$
which gives $z'L'\subset Z$ as $n\rightarrow\infty$.

\noindent{\bf Case 2.} $L$ is dense in $u_{\omg b}\cal{A}_{\vep}u_{\omg b}^{-1}$ for some $\vep=p^{-k}$:
Let $d_{a}=\op{diag}(a,1)$, where $a\in(1+p^{k}\zp)$.
We obtain $u_{\omg b}d_{a}u_{\omg b}^{-1}=v_{-b(a-1)}d_{a}$.
Since $Z$ is right $H$-invariant, $v_{-b(a-1)}\in LH$.
Hence, there is $L''\subset G$ such that $L''$ is dense in $\cal{V}_{\vep'}$, where $\vep'=p^{-\nu_{p}(b)-k}$.
Thus, it is reduced to Case 1 by setting $L=L''$, which leads to the conclusion.

\end{proof}

\subsection{Axis denseness of $\Gamma$-orbit}
We identified $C$ with $\partial(\hull(C))=\partial(g_C.T_H)$.
From Section \ref{ss:bt},
recall that $N_n(\bv)$ the $n$-neighborhood of $\bv$ in $T_G$ 
and that,
for two adjacent vertices $\bv,\bw\in T_{G}$, $B_{\bv}^{\bw}$ is a branch at $\bv$ in $\bw$-direction.

\begin{prop}\label{prop:infsubset}
If $C\cap \Lambda\neq\varnothing$, then it is an infinite subset of $\Lambda$.
\end{prop}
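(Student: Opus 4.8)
The plan is to translate the statement entirely into the combinatorics of the subtree $S_g := g_{C}.T_{H}\cap S_{\Gamma}$. First I would record the identification
$$C\cap\Lambda=\partial S_g,$$
which holds because an end $\xi\in\partial T_{G}$ lies in $C=\partial(g_{C}.T_{H})$ and in $\Lambda=\partial S_{\Gamma}$ exactly when a tail of the unique geodesic ray toward $\xi$ lies in both $g_{C}.T_{H}$ and $S_{\Gamma}$, i.e.\ in $S_g$. Thus it suffices to show that $\partial S_g$ is infinite whenever it is nonempty.

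The key local observation is a degree count at each vertex of $S_g$. Since $g_{C}.T_{H}$ is a $(p+1)$-regular tree, every $\bv\in S_g$ has exactly $p+1$ neighbours inside $g_{C}.T_{H}$; among the $p^{2}+1$ edges of $T_{G}$ at $\bv$, at most $(p^{2}+1)-\deg_{S_{\Gamma}}(\bv)\le p-1$ fail to lie in $S_{\Gamma}$ by condition (1) of Definition \ref{def:acyl}. Hence at least $(p+1)-(p-1)=2$ of the $g_{C}.T_{H}$-edges survive in $S_g$, so $\deg_{S_g}(\bv)\ge 2$ for every $\bv\in S_g$; in particular $S_g$ has no leaves. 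The same count at a vertex $\bw$ with $\deg_{S_{\Gamma}}(\bw)\ge p^{2}-p+3$ leaves at most $p-2$ missing edges and yields $\deg_{S_g}(\bw)\ge 3$, i.e.\ $\bw$ is a branching vertex of $S_g$.

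Next I would produce infinitely many branching vertices along a single ray. Fix $\xi\in C\cap\Lambda$ and let $r=[\bv_{0},\xi)$ be the geodesic ray toward $\xi$; a tail of $r$ lies in $S_g\subset S_{\Gamma}$. Because $\core(X)=\Gamma\backslash S_{\Gamma}$ is compact, $\cal{F}\cap S_{\Gamma}$ is bounded, so $r$ passes through infinitely many distinct translates $\gamma(\cal{F}\cap S_{\Gamma})$. Each time $r$ traverses such a translate it enters and exits at vertices $\bu,\bu'$ lying in $\gamma\cal{F}'$ (the vertices of $\cal{F}$ having a neighbour outside $\cal{F}$). Applying the second half of condition (1) to $\gamma^{-1}\bu,\gamma^{-1}\bu'\in\cal{F}'$ produces a vertex of $S_{\Gamma}$-degree at least $p^{2}-p+3$ on the geodesic $[\gamma^{-1}\bu,\gamma^{-1}\bu']$; translating back by $\gamma$ and using $\Gamma$-invariance of degrees gives a branching vertex $\bw$ of $S_g$ on the segment $[\bu,\bu']\subset r$. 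Since consecutive such vertices are separated along $r$ by uniformly bounded gaps (on the order of $\op{diam}(\core(X))$), the ray $r$ contains infinitely many branching vertices $\bw_{1},\bw_{2},\dots$.

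Finally I would convert branching into distinct ends. At each $\bw_{k}$ the ray $r$ uses only two of its $\ge 3$ directions in $S_g$, so there is an edge of $S_g$ at $\bw_{k}$ off $r$; following it and extending without backtracking (possible since $S_g$ has no leaves) yields a ray $r_{k}\subset S_g$ that diverges from $r$ at $\bw_{k}$ and hence converges to an end $\xi_{k}\in\partial S_g$. As $T_{G}$ is a tree, distinct branching vertices force $\xi_{j}\ne\xi_{k}$ for $j\ne k$, so $\partial S_g=C\cap\Lambda$ is infinite. I expect the main obstacle to be the third paragraph: guaranteeing that branching vertices recur along $r$ with bounded gaps. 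This scarce-recurrence phenomenon is precisely what the second clause of high-branchedness is designed to supply, and the bookkeeping that relates the fundamental-domain crossings of $r$ to pairs of vertices in $\cal{F}'$ must be carried out with care.
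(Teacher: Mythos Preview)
Your argument is correct and follows essentially the same route as the paper: both exploit the degree count showing that at every vertex of $S_g=g_{C}.T_{H}\cap S_{\Gamma}$ one has $\deg_{S_g}\ge 2$, and $\deg_{S_g}\ge 3$ at vertices where $\deg_{S_{\Gamma}}\ge p^{2}-p+3$, and then use the second clause of high-branchedness (via the bound $r\ge\op{diam}(\core X)+1$, which is equivalent to your fundamental-domain-crossing formulation) to produce such branching vertices recurrently along a ray in $S_g$, branching off at each to obtain infinitely many distinct ends. Your presentation is somewhat more explicit---you isolate the no-leaves property of $S_g$ as a separate observation and invoke the $\cal{F}'$ condition directly rather than through the diameter bound---but the underlying mechanism is identical to the paper's inductive construction of the $\bw_{i,j}$ and $\beta_i$.
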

\begin{proof}
Choose $\alpha$ in $C\cap\Lambda$. 
We then have a geodesic ray $[\bv,\al)\subseteq\hull(C)\cap S_{\Gamma}$. 
Now let
$$
N^{S_{\Gamma}}_{n}(\bv)=N_{n}(\bv)\cap S_{\Gamma.}
$$
	Condition (1) in Definition \ref{def:acyl} ensures that
if we let $r\geq\op{diam}({\core(X)})+1$, then there is 
	$\bw_{1,0}\in N^{S_{\Gamma}}_r(\bv)\cap[\bv,\al)$
such that $\op{deg}_{S_{\Gamma}}(\bw_{1,0})\geq p^2-p+3$.
Remark that $\hull(C)$ is a regular subtree of degree $p+1$. 
Hence, there is a vertex $\bw$ which is adjacent to $\bw_{1,0}$ and $[\bw_{1,0},\bw]\subset(\hull(C)\cap S_{\Gamma})-[\bv,\al)$.
Let 
$$
	\nrc{\bv}=N_{r}^{S_{\Gamma}}(\bv)\cap\hull(C).
$$
There exists $\bw_{1,1}\in \nrc{\bw_{1,0}}\cap B_{\bw_{1,0}}^{\bw}$ 
such that $\op{deg}_{S_{\Gamma}}(\bw_{1,1})\geq p^{2}-p+3$ and 
$d(\bv,\bw_{1,0})<d(\bv,\bw_{1,1})$.
Inductively, we can find $\bw_{1,j}\in\nrc{\bw_{1,j-1}}$ for $j\in\z_{>0}$ 
such that $\op{deg}_{S_{\Gamma}}(\bw_{1,j-1})\geq p^{2}-p+3$ and 
$d(\bv,\bw_{1,j-1})<d(\bv,\bw_{1,j})$.
Eventually, $\bw_{1,j}$ converges to some $\beta_{1}\in \Lambda-\{\al\}$ as $j\rightarrow\infty$.
Remark that $[\bw_{1,0},\beta_{1})\subset\hull(C)$ by our construction. 

In the same way, we can construct $\{\bw_{i,j}\}_{j=0}^{\infty}$ for all $i\geq 1$ and their limits $\{\beta_{i}\}_{i=1}^{\infty}\subset\Lambda$.
Hence, $\{\beta_{i}\}_{i=1}^{\infty}\subset C\cap\Lambda$ so that $|C\cap\Lambda|=\infty$.
\end{proof}

\begin{figure}[h!]
  \includegraphics[width=0.3\linewidth]{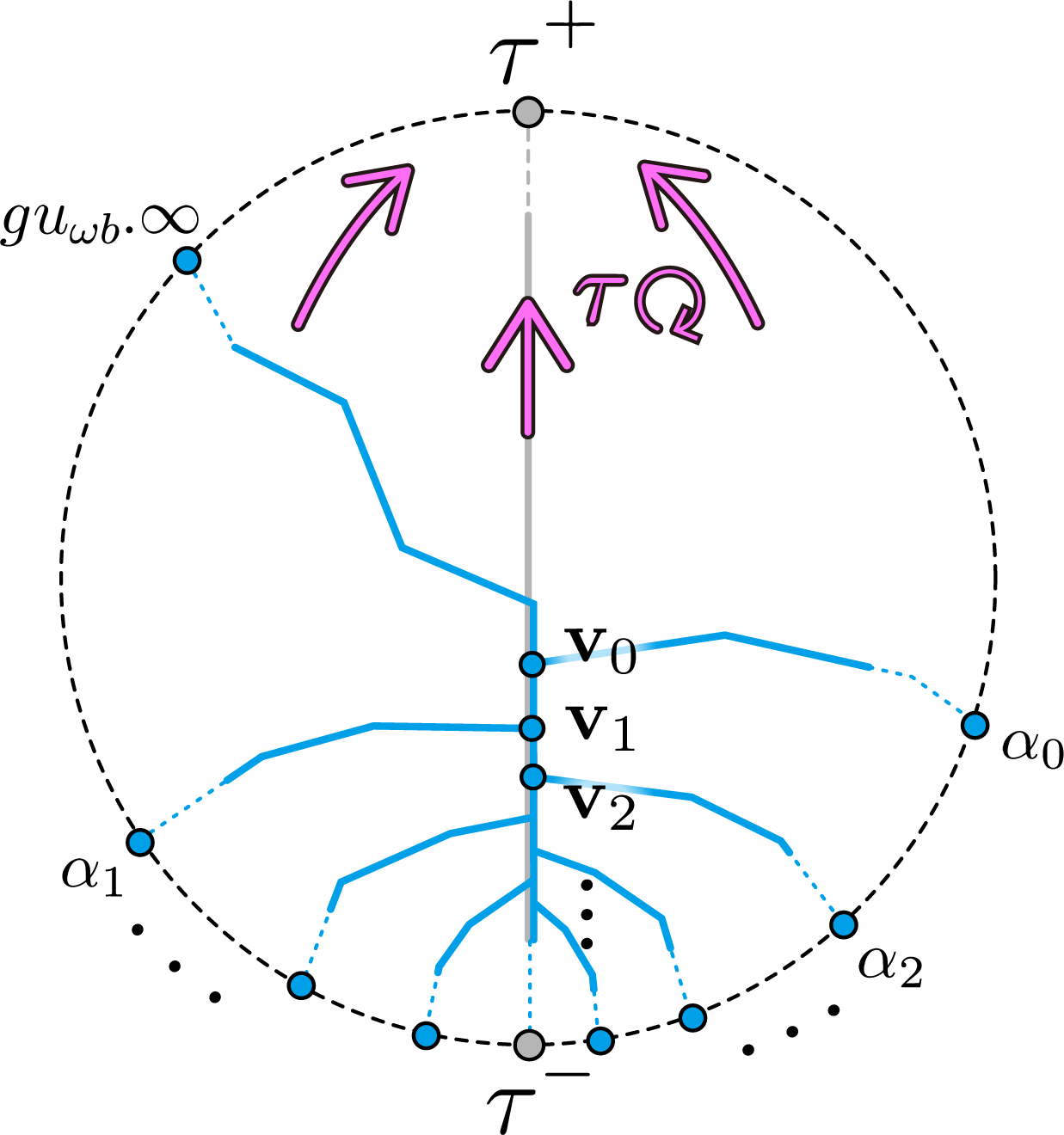}
  \caption{Simplified description of situation in the proof of Proposition \ref{prop:goodcircle}. Blue lines are parts of $gu_{\omg b}.T_{H}$.}
  \label{fig:tt}
\end{figure}

\noindent
\begin{rmk}\label{rmk4}
{\rm
Let $C\in\cal{C}$ be given and set $C=gH=g.\cl{\qp}$ for some $g\in G$.
We remark that for any given three mutually distinct points $g.\al_{1},g.\al_{2},\beta\in g.\cl{\qp}$, we can find a representative $g_{C}$ of $C$ as an $H$-coset such that
\begin{equation}\label{eqn:coor}
	(g_{C}.0,g_{C}.\infty,g_{C}.1)=(g.\al_{1},g.\al_{2},\beta).
\end{equation}
}
\end{rmk}

\begin{Prop}\label{prop:goodcircle}
Suppose that $\Gamma C$ is not closed and $C\cap\Lambda$ is non-empty. Then for every $\ut\in\Gamma$, there is a circle $D\in\cl{\Gamma C}$ such that $\ut^{+},\ut^{-}\in D$.
\end{Prop}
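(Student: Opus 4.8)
The plan is to combine the $\cal{V}$-orbit produced by Corollary \ref{cor:dense} with the north--south dynamics of $\ut$ acting on the space of circles $\cal{C}$, using the elementary tree fact that two geodesics sharing an endpoint merge into a common ray.

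First I would pass to the chaotic setting. Since $C\cap\Lambda\neq\varnothing$, Proposition \ref{prop:infsubset} supplies two distinct points $\al,\beta\in C\cap\Lambda$, so by Remark \ref{rmk4} I may pick a representative $g_{C}$ of $C$ with $g_{C}.0=\al$ and $g_{C}.\infty=\beta$; then $z:=\Gamma g_{C}\in\RFM$. As $\Gamma C$ is not closed, the dual orbit $zH\subset\FM$ is not closed (the topologies match by Remark \ref{rmk:coset}) and meets $\RFM$, so $zH$ is chaotic. Applying Corollary \ref{cor:dense} to $Z=\cl{zH}$ gives $z'\in\RFM$ with $z'\cal{V}\subset Z$. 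Lifting $z'$ to $g'\in G$ (so $g'.0,g'.\infty\in\Lambda$) and transporting through the duality between right $H$-orbits in $\FM$ and left $\Gamma$-orbits in $\cal{C}$, I obtain that the whole family $\{g'u_{\omg t}.\cl{\qp}:t\in\qp\}$ lies in $\cl{\Gamma C}$: each $\Gamma g'u_{\omg t}\in Z$ lifts to a sequence $\gamma_{n}g_{C}h_{n}\to g'u_{\omg t}$ in $G$, whence $\gamma_{n}C=\gamma_{n}g_{C}h_{n}H\to g'u_{\omg t}.\cl{\qp}$ in $\cal{C}$.

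Next comes the geometric heart. Every circle $g'u_{\omg t}.\cl{\qp}$ passes through the common point $\xi_{0}:=g'.\infty$ because $u_{\omg t}$ fixes $\infty$, and the boundaries $\bigcup_{t\in\qp}g'u_{\omg t}.\cl{\qp}=g'.\cl{\qpo}=\partial T_{G}$ cover all of $\partial T_{G}$. Assuming first $\ut^{-}\neq\xi_{0}$, there is $b\in\qp$ with $\ut^{-}\in D_{0}:=g'u_{\omg b}.\cl{\qp}\in\cl{\Gamma C}$, and $D_{0}$ also contains $\xi_{0}$, so $\hull(D_{0})$ contains the bi-infinite geodesic $(\xi_{0},\ut^{-})$. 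Since two geodesics with the common endpoint $\ut^{-}$ share a ray toward $\ut^{-}$, this geodesic coincides with $\ax(\ut)=(\ut^{+},\ut^{-})$ on a ray $[\bv_{0},\ut^{-})$; hence $\hull(D_{0})$ contains $\ut^{-}$ together with a ray of $\ax(\ut)$. In the degenerate case $\ut^{-}=\xi_{0}$ I would instead choose $b$ with $\ut^{+}\in D_{0}:=g'u_{\omg b}.\cl{\qp}$; then $D_{0}$ already contains both $\ut^{-}=\xi_{0}$ and $\ut^{+}$, and we are finished.

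Finally I would run the attracting dynamics of $\ut$. As $\cl{\Gamma C}$ is closed and $\Gamma$-invariant, $\ut^{n}D_{0}\in\cl{\Gamma C}$ for all $n$, and $\hull(\ut^{n}D_{0})\supseteq[\ut^{n}\bv_{0},\ut^{-})$; these rays are nested and increase to $\ax(\ut)$ because $\ut^{n}\bv_{0}\to\ut^{+}$. Every $\ut^{n}D_{0}$ passes through the fixed point $\ut^{-}$, and the $H$-subtrees through a fixed end form a compact family (an inverse limit of finite branching choices, compact by Tychonoff), so after a subsequence $\ut^{n}D_{0}\to D$ in $\cal{C}$ with $\ut^{-}\in D$; comparing subtrees on each bounded neighborhood of a base vertex of $\ax(\ut)$ shows $\ax(\ut)\subset\hull(D)$, so $\ut^{+},\ut^{-}\in D$, and $D\in\cl{\Gamma C}$ since this set is closed. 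The step I expect to be most delicate is exactly this convergence: confirming that the limit of the $H$-subtrees $\hull(\ut^{n}D_{0})$ is again a genuine $H$-subtree (an element of $\cal{C}$, not merely an abstract $(p+1)$-regular subtree) and that Hausdorff convergence of circles matches local stabilization of subtrees; by contrast the duality translation in the first paragraph is routine bookkeeping.
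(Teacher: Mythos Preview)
Your approach is essentially the paper's: use Corollary~\ref{cor:dense} to place a circle $D_{0}\in\cl{\Gamma C}$ through $\ut^{-}$, then push by powers of $\ut$ and pass to a limit. The paper executes the limit step more explicitly and thereby avoids your flagged concern. Rather than invoking compactness, it chooses for each $j$ a frame representative $g_{j}\sim(\ut^{-},\,g'.\infty,\,\al_{jn_{\ut}})$ of the \emph{same} circle $D_{0}$, where $\al_{jn_{\ut}}\in D_{0}$ is picked so that $(\ut^{-},\al_{jn_{\ut}})\cap\ax(\ut)=(\ut^{-},\bv_{jn_{\ut}}]$. Then $\ut^{j}g_{j}\to g^{*}\sim(\ut^{-},\ut^{+},\al)$ converges in $G$ itself (the second coordinate by north--south dynamics, the third by the choice of $\al_{jn_{\ut}}$), so $\ut^{j}D_{0}=\ut^{j}g_{j}H\to g^{*}H$ in $\cal{C}=G/H$ directly, and the limit is automatically an $H$-subtree.

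Your compactness claim as stated is not correct: the $H$-subtrees containing a fixed \emph{end} $\ut^{-}$ do not form a compact family (for instance, circles through $\infty$ are parametrized by $B/(B\cap H)$ with $B$ the upper-triangular subgroup, and the factor $\qpo/\qp\cong\qp$ is non-compact). What \emph{is} compact is the set of circles whose hull contains a fixed \emph{vertex} $\bv_{0}$: after translating $\bv_{0}$ to the origin this is $\PGLt(\zpo)H/H\cong\PGLt(\zpo)/\PGLt(\zp)$, a quotient of a compact group. Since you already observed $\hull(\ut^{n}D_{0})\supseteq[\ut^{n}\bv_{0},\ut^{-})\ni\bv_{0}$ for all $n\ge0$, this is the compactness you should invoke; with that correction your inverse-limit picture is valid and the remainder of your argument goes through.
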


\begin{proof}
Let $\ut=g_{\ut}d_{\ut}g_{\ut}^{-1}\in\Gamma$ be a hyperbolic element, 
where $g_{\ut}\in G$ and $d_{\ut}=\op{diag}(p^{-n_{\ut}}a_{\ut},1)$ for some $n_{\ut}\in\N$ and $a_{\ut}\in\zpo^{\xx}$.
Recall that $n_{\ut}$ is the translation length of $\ut$ and that $g_{\ut}.0=\ut^{-}$ and $g_{\ut}.\infty=\ut^{+}$.

Let $z=\Gamma g_{C}\in\FM$.
We identified $\Gamma\backslash\cal{C}$ and $\FM/H$ in Section \ref{ss:bt}.
Indeed, $\Gamma C\neq\cl{\Gamma C}$ implies that $zH\neq\cl{zH}$.
It follows that ${zH}$ is a chaotic orbit.
By Corollary \ref{cor:dense}, there exists $g\in G$ such that $\Gamma g\cal{V}\subset \cl{zH}=\cl{\Gamma g_{C}H}=\cl{\Gamma C}$.
Let $D'=gH$ so that $D'\in\cl{\Gamma C}$.
Note that $D'$ intersects $\Lambda$ so that $D'\cap\Lambda$ is an infinite set by Proposition \ref{prop:infsubset}.
By Remark \ref{rmk4}, we may assume that $g.\infty\in\Lambda$.

Now consider $u_{a+\omg b}\in\cal{N}$ as in \ref{matrices} for $a,b\in\qp$.
We have $u_{a+\omg b}.0=a+\omega b$.
Thus we have $gu_{a+\omg b} H=gu_{\omg b}u_{a}H=gu_{\omg b}H$ and we can use  $gu_{\omg b}$ instead of $gu_{a+\omg b}$ to translate the point $0\in\cl{\qpo}$ to $\ut^{-}$.
Hence, we may assume that $gu_{\omg b}.0=\ut^{-}$.
Let $D''=gu_{\omg b}H$.
Here we remark that $D''\in\cl{\Gamma C}$, $\ut^{-}\in gu_{\omg b}.\cl{\qp}$, and $gu_{\omg b}.\infty=g.\infty$.

Since two geodesics $(\ut^{-},\ut^{+})$ and $(\ut^{-},gu_{\omg b}.\infty)$ share a common end point,
$
	(\ut^{-},\ut^{+})\cap(\ut^{-},gu_{\omg b}.\infty)
$
is an infinite set in $\vtxg$.
In particular, the intersection $(\ut^{-},\ut^{+})\cap(\ut^{-},\beta)$ for any 
$\beta\in (gu_{\omg b}.\cl{\qp}-\{\ut^{-}\})$ is a geodesic ray.
Recall that $(\ut^{-},\ut^{+})=\ax(\ut)$.
Let $\{\bv_{i}\}_{i=0}^{\infty}$ be a sequence in $\ax(\ut)$ towards $\tau^{-}$ such that 
$[\bv_{0},\ut^{+})\cap[\bv_{0},gu_{\omg b}.1)=\{\bv_{0}\}$
and $d(\bv_{0},\bv_{i+1})-d(\bv_{0},\bv_{i})=1$ for all $i\in\z_{\geq1}$.
(see Figure \ref{fig:tt}).

On the other hand, there is a sequence $\{\al_{i}\}_{i=0}^{\infty}$ in $gu_{\omg b}.\cl{\qp}$ such that $(\ut^{-},\al_{i})\cap\ax(\ut)=(\ut^{-},\bv_{i}]$.
Note that $\ut.(\ut^{-},\al_{i})=(\ut^{-},\ut.\al_{i})$ and
$$
	(\ut^{-},\ut.\al_{i})\cap\ax(\ut)=(\ut^{-},\bv_{i-n_{\ut}}]
$$
for $i\geq n_{\ut}$ since $\ut$ translates vertices along $\ax(\ut)$.
Now consider a sequence of frames, $g_{0}=gu_{\omg b}$ and
$
	g_{j}\sim(\ut^{-},gu_{\omg b}.\infty,\al_{jn_{\ut}})
$
for all $i\geq 1$.
Note that $g_{j}$'s are all representatives of $D''=gu_{\omg b}H$ since each triple consists of three mutually distinct elements of $gu_{\omg b}.\cl{\qp}$.
Hence if we consider $\cl{\Gamma C}$ as a subset of $G$, then  $g_{j}\in\cl{\Gamma C}$ for all $j$.
Applying $\ut$-action and passing to a subsequence if necessary, we have 
$\ut^{j}.(\ut^{-},gu_{\omg b}.\infty,\al_{jn_{\ut}})
	\rightarrow(\ut^{-},\ut^{+},\al)
$
as $j\rightarrow\infty$
for some $\al\in\cl{\qpo}$ such that $\ax(\ut)\cap(\ut^{-},\al)=(\ut^{-},\bv_{0}]$.

By our construction, $\ut^{-},\ut^{+},\al$ are mutually distinct.
Thus, let $g^{*}$ be the element of $G$ corresponding to the frame $(\ut^{-},\ut^{+},\al)$.
Let $D=g^{*}H$.
We conclude that $D\in\cl{\Gamma C}\subset G/H$ since $\cl{\Gamma C}$ is $H$-invariant and $g^{*}\in\cl{\Gamma C}$, which we consider as a subset of $G$.
\end{proof}

\section{Closed and dense orbits}
	In this chapter, we will investigate the difference between closed and dense $\Gamma$-orbit of a circle $C$. 
We will consider the closed orbit case first.
The properties of closed orbits heavily rely on the combinatorial nature of the Bruhat-Tits tree.
We will use results from Chapter 3 to prove the properties of dense orbits.
The following lemma is simple yet important.
The proof is as in \cite{MMO}.

\begin{lem}\label{lem:closed_discrete}
Let $\Gamma$ be a finitely generated subgroup of $G$. If $\Gamma C$ is closed, then it is discrete in $\cal{C}=G/H$.
\end{lem}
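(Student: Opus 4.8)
The plan is to argue by contradiction, reducing non-discreteness to a convergent sequence of distinct translates of $C$, lifting it to $G$, and then separating a harmless \emph{bounded} case, killed by the proper discontinuity of the Schottky group, from a \emph{sliding} case, which is the real content and which I would rule out using the compactness of $\core(X)$ together with the closedness hypothesis.

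\emph{Reduction and lift.} Suppose $\Gamma C$ is closed but not discrete. Then it has a non-isolated point; since $\Gamma C$ is closed that point lies in $\Gamma C$, and after translating by an element of $\Gamma$ (a homeomorphism of $\cal C$ commuting with the $\Gamma$-action) I may assume $C$ itself is non-isolated. Hence there are $\gamma_n\in\Gamma$ with $\gamma_n C\to C$ in $\cal C$, the circles $\gamma_n C$ pairwise distinct and different from $C$. Fix a representative $g_C\in G$ of $C$. As $G\to G/H$ admits local sections, the convergence lifts: there exist $h_n\in H$ with $\gamma_n g_C h_n\to g_C$. Setting $\phi_n:=\gamma_n g_C h_n g_C^{-1}$ and $s_n:=g_C h_n g_C^{-1}$, we get $\phi_n\to e$ in $G$, with $s_n\in g_C H g_C^{-1}=\op{Stab}_G(C)$ preserving the subtree $\hull(C)=g_C.T_H$, and $\gamma_n=\phi_n s_n^{-1}$. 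Note that $\gamma_n C=C$ is equivalent to $\gamma_n\in\op{Stab}_G(C)$, which is excluded for every $n$.

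\emph{The bounded case.} Fix a vertex $o\in\hull(C)$. Since $\phi_n\to e$ fixes larger and larger balls about $o$, for large $n$ we have $\gamma_n.(s_n.o)=\phi_n.o=o$, that is $\gamma_n^{-1}.o=s_n.o$. If $\{s_n.o\}$ is bounded in $T_G$ (equivalently $\{h_n\}$ is bounded in $H$, as $H$ acts properly on $T_H$), then by local finiteness of $T_G$ the vertices $\gamma_n^{-1}.o$ take only finitely many values, so along a subsequence $\gamma_n^{-1}.o$ is constant; since $\Gamma$ is discrete and torsion free its vertex stabilizers are trivial (Proposition \ref{prop:Schottky}), forcing $\gamma_n$ eventually constant and contradicting that the $\gamma_n C$ are distinct. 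Thus $\{h_n\}$ must be unbounded.

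\emph{The sliding case (the main obstacle).} Now $s_n$ translates $\hull(C)$ along itself, so $s_n.o=\gamma_n^{-1}.o$ runs to infinity inside $\hull(C)$ toward an end $\zeta\in\partial(\hull(C))=C$. As $\{\gamma_n^{-1}.o\}\subset\Gamma.o$, the end $\zeta$ lies in $\Lambda$; being also on $C$, it yields a ray $[o,\zeta)\subset S_{\Gamma}\cap\hull(C)$ whose image under $\pi$ lies in the \emph{compact} graph $\core(X)$. Compactness forces $[o,\zeta)$ to be eventually periodic, so $\zeta=\eta^{+}$ for a hyperbolic $\eta\in\Gamma$ whose axis eventually runs inside $\hull(C)$. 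The crux is to upgrade this to $\eta\in\Gamma^{C}=\op{Stab}_\Gamma(C)$: once that is known, powers of $\eta$ fix $C$, so replacing $\gamma_n$ by $\gamma_n\eta^{k_n}$ (which does not change the point $\gamma_n C$) absorbs the unbounded translation and returns us to the bounded case, giving the contradiction. I expect this upgrade to be the hard part, since a single end-ray of $\ax(\eta)$ lying in $\hull(C)$ does not by itself force $\eta$ to preserve the $(p+1)$-regular subtree $\hull(C)$ inside the $(p^2+1)$-regular tree $T_G$; excluding an axis that shares a ray with $\hull(C)$ but then leaves it — precisely the behaviour underlying the \emph{dense} orbit case — is exactly where the closedness of $\Gamma C$ and the combinatorial rigidity of $H$-subtrees must be invoked.
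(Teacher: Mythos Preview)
Your approach is far more elaborate than needed, and the ``sliding case'' is left genuinely unfinished: you never prove that the hyperbolic $\eta$ with $\eta^{+}=\zeta$ actually lies in $\Gamma^{C}$, and you correctly identify this as the crux. Without that step the argument does not close, and resolving it would essentially require the content of Lemma~\ref{lem:fin_lim_circles}(2), which in the paper is proved \emph{using} the present lemma --- so you are headed toward circularity.

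The paper's proof (deferred to \cite{MMO}) is a one-line countability argument that bypasses all of this. Since $\Gamma$ is finitely generated it is countable, so $\Gamma C$ is a countable subset of $\cal C=G/H$. If $\Gamma C$ is closed, it is locally compact (as a closed subset of the locally compact space $G/H$), hence a Baire space. If $\Gamma C$ had no isolated point, each singleton $\{D\}\subset\Gamma C$ would be closed and nowhere dense, so $\Gamma C$ would be a countable union of nowhere dense sets in itself, contradicting Baire. Thus $\Gamma C$ has an isolated point; since $\Gamma$ acts transitively on $\Gamma C$ by homeomorphisms of $\cal C$, every point is isolated and $\Gamma C$ is discrete. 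No tree geometry, no convex-cocompactness, no lifting to $G$ is required --- only countability of $\Gamma$.
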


\subsection{Closed orbits in $\cal{C}_\Lambda$}

For a circle $C=g_CH$, recall that $\hull(C)=g_C.T_H$.
Recall also that  
$
	\Gamma^{C}=\op{Stab}_{\Gamma}(C)=g_{C}(\Gamma\cap H)g_{C}^{-1}.
$
For every vertex $\bv$ in $g_C.T_H$, we define a subset $\Gamma(C,\bv)$ of $\Gamma$ by
$$
    \Gamma(C,\bv):=\{\gamma\in\Gamma:\bv\in \gamma g_C.T_H\}/\sim,
$$
where 
$
	\gamma\sim\tau 
	\hspace{3pt}
	\Longleftrightarrow 
	\hspace{3pt}
	\gamma g_{C}.T_{H}=\tau g_{C}.T_{H}.
$

For a subset $A$ of $T_{G}$, 
we say $A$ is \emph{locally $\Gamma C$-finite} if $|\Gamma(C,\bv)|<\infty$ for every $\bv\in A$.
Recall also that $\Lambda^{*}=\Lambda-\Lambda_{\ax}$, where $\Lambda_{\ax}=\{\gamma^{\pm}:\gamma\in\Gamma\}$.
Let 
\begin{equation}\label{lambdaC}
\Lambda_{\ax}^{C}=\{\gamma^{\pm}:\gamma\in\Gamma^{C}\}.
\end{equation}

\begin{lemma}\label{lem:fin_lim_circles}
Let $C=g_CH$ for some $g_C\in G$. Suppose that $\Gamma C$ is closed. 
Then,
\begin{enumerate}
	\item For every vertex $\bv\in g_C. T_H$, $\Gamma(C,\bv)$ is finite.
	\item $C\cap\Lambda\subseteq\Lambda^{*}\cup\Lambda_{\ax}^{C}$.
\end{enumerate}
\end{lemma}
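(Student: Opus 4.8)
The plan is to prove the two assertions separately, the common engine being that a closed orbit $\Gamma C$ is \emph{discrete} in $\cal{C}=G/H$ by Lemma \ref{lem:closed_discrete}. The geometric heart of the argument is that the family of circles through a fixed vertex is compact, so that a discrete orbit can meet it only finitely.

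For (1), fix a base vertex $\bw_{0}\in\vtxh$ of $T_{H}$ and, for $\bv\in\vtxg$, set $\Omega_{\bv}=\{gH\in G/H:\bv\in g.T_{H}\}$. Since $H$ acts transitively on $\vtxh$, one checks both inclusions to identify $\Omega_{\bv}$ with the image in $G/H$ of the set $\{g\in G:g.\bw_{0}=\bv\}$; the latter is a left coset of the maximal compact subgroup $\op{Stab}_{G}(\bw_{0})$, so $\Omega_{\bv}$ is compact. Moreover $\op{Stab}_{G}(T_{H})=H$ (a Möbius transformation preserving $\cl{\qp}$ is determined by the images of three points of $\cl{\qp}$, hence has $\qp$-coefficients), so $\gamma g_{C}.T_{H}=\tau g_{C}.T_{H}$ iff $\gamma g_{C}H=\tau g_{C}H$; thus $\Gamma(C,\bv)$ is in natural bijection with $\Gamma C\cap\Omega_{\bv}$. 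As $\Gamma C$ is discrete and $\Omega_{\bv}$ is compact, this intersection is finite, which proves (1).

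For (2), take $\xi\in C\cap\Lambda$ and suppose $\xi\notin\Lambda^{*}$, i.e.\ $\xi\in\Lambda_{\ax}$; I must produce an element of $\Gamma^{C}$ fixing $\xi$. Write $\xi=\gamma^{+}$ for a hyperbolic $\gamma\in\Gamma$ (replace $\gamma$ by $\gamma^{-1}$ if $\xi=\gamma^{-}$). Since $\xi\in C=\partial(g_{C}.T_{H})$ there is a geodesic ray in $g_{C}.T_{H}$ towards $\xi$, and since $\xi=\gamma^{+}$ the axis $\ax(\gamma)$ also carries a ray towards $\xi$. Two rays to the same end of a tree eventually coincide, so their intersection is a ray $\sigma=[\bv_{0},\xi)\subseteq g_{C}.T_{H}\cap\ax(\gamma)$ with base vertex $\bv_{0}$. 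Now $\gamma^{-1}$ fixes $\xi$ and translates $\ax(\gamma)$ away from $\xi$, so $\gamma^{-n}.\sigma=[\gamma^{-n}.\bv_{0},\xi)\supseteq\sigma$ for every $n\ge0$; consequently
\[
	\bv_{0}\in\sigma\subseteq\gamma^{-n}.\sigma\subseteq\gamma^{-n}.(g_{C}.T_{H})=\gamma^{-n}g_{C}.T_{H},
\]
so every circle $\gamma^{-n}C$ passes through $\bv_{0}$, i.e.\ $\gamma^{-n}g_{C}H\in\Omega_{\bv_{0}}$.

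Therefore $\{\gamma^{-n}C:n\ge0\}\subseteq\Gamma C\cap\Omega_{\bv_{0}}$, a finite set by (1). Pigeonhole gives $0\le m<n$ with $\gamma^{-m}C=\gamma^{-n}C$, whence $\gamma^{\,n-m}C=C$, i.e.\ $\gamma^{\,n-m}\in\op{Stab}_{\Gamma}(C)=\Gamma^{C}$. Since $n-m>0$ this element is hyperbolic with attracting fixed point $(\gamma^{\,n-m})^{+}=\gamma^{+}=\xi$, so $\xi\in\Lambda_{\ax}^{C}$, proving (2). The main obstacle is the compactness of $\Omega_{\bv}$ in (1)—equivalently, that circles through a fixed vertex form a compact family—together with the choice of the inverse powers $\gamma^{-n}$ (rather than $\gamma^{n}$) in (2): it is precisely taking $\gamma^{-1}$, which drags $\sigma$ away from $\xi$, that forces all the translated subtrees to contain the single vertex $\bv_{0}$ and lets the finiteness from (1) collapse the orbit and yield the stabilizing element.
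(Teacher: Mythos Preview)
Your proof is correct and follows essentially the same line as the paper's. For (1) the paper extracts a convergent subsequence of $H$-subtrees through $\bv$ by a pigeonhole/diagonal argument on local configurations in $N_{n}(\bv)$, whereas you package the same compactness as ``$\Omega_{\bv}$ is the image of a coset of the maximal compact subgroup''; for (2) your argument is identical to the paper's up to replacing $\gamma^{-}$ by $\gamma^{+}$ (and correspondingly $\gamma^{n}$ by $\gamma^{-n}$).
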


\begin{proof}(1)
 Suppose that for a vertex $\bv\in g_{C}.T_{H}$, there is an infinite sequence of distinct elements 
 $
    \{[\rho_i]\}_{i=1}^\infty\subset\Gamma(C,\bv).
 $
For every $n\in\z_{\geq1}$, one can construct a subsequence $\{\rho_{i_{n}}g_{C}.T_{H}\}_{j=0}^{\infty}$ of $H$-subtrees such that,
 $$
 	N_{n}(\bv)\cap \rho_{i_{n}}g_{C}.T_{H}=N_{n}(\bv)\cap(\cap_{j=n}^{\infty}\rho_{i_j}g_{C}.T_{H}).
 $$
Since $\Gamma C$ is closed, $\rho_{i_{n}}g_{C}.T_{H}$ converges in $\Gamma \hull(C)$ as $n\rightarrow \infty$.
Since $\Gamma C$ is discrete, this is a contradiction.

\noindent(2) 
Suppose that there exists $a\in C\cap(\Lambda_{\ax}-\Lambda_{\ax}^{C})$.
Since $a$ is an element of $\Lambda_{\ax}$, there exists $\gamma\in\Gamma$ such that $\gamma^{-}=a$.
By definition (\ref{lambdaC}), 
it follows that $\gamma\in\Gamma-\Gamma^{C}$ since $\gamma^{-}\in\Lambda_{\ax}-\Lambda_{\ax}^{C}$.
Since $\gamma^{-}\in C=\partial (g_{C}.T_{H})$,
there exists a geodesic ray $\ell$ in $g_{C}.T_{H}\cap\ax(\gamma)$.
Pick a vertex $\bv\in\ell$.
Since $\ell\subset\ax(\gamma)$, we have $\ell\subset\gamma^{n}.\ell$ so that $\ell\subset\gamma^{n}g_{C}.T_{H}$ for all $n\in\z_{\geq0}$.
It follows that $\bv\in\gamma^{n}g_{C}.T_{H}$.
Hence, $\{[\gamma^{n}]\}_{n=1}^{\infty}\subseteq\Gamma(C,\bv)$.


If $\{[\gamma^{n}]\}_{n=1}^{\infty}$ is finite, then there exists $m\in\z_{>1}$ such that $\gamma^{m}\in\Gamma^{C}$.
Hence $\gamma^{\pm}\in\Lambda_{\ax}^{C}$,
which is a contradiction since $\gamma^{-}=a\notin\Lambda_{\ax}^{C}$.

If $\{[\gamma^{n}]\}_{n=1}^{\infty}$ is infinite, then $\Gamma(C,\bv)$ is infinite, which is a contradiction by part (1).

\end{proof}

As a corollary of Lemma \ref{lem:fin_lim_circles},
when $\Gamma C$ is closed,
every $H$-subtree defined by an element of $\Gamma C$ is locally $\Gamma C$-finite.

\begin{thm}\label{thm:discreteorbit}
Let $C\in\cal{C}$. 
If $\Gamma C$ is closed and $C\cap\Lambda\neq\varnothing$, then $\Gamma^C$ is finitely generated and $C\cap\Lambda$ is the limit set of $\Gamma^C=\op{Stab}_{\Gamma}(C)$.
\end{thm}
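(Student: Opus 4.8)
The plan is to establish the two assertions—finite generation of $\Gamma^{C}$ and the identification $C\cap\Lambda=\Lambda(\Gamma^{C})$ (with $\Lambda(\Gamma^{C})$ the limit set of $\Gamma^{C}$)—separately, drawing all combinatorial input from the local $\Gamma C$-finiteness of Lemma~\ref{lem:fin_lim_circles} together with the compactness of $\core(X)$. For finite generation I would first note that $\Gamma^{C}$ stabilizes $C$, hence preserves $\hull(C)=g_{C}.T_{H}$ and also $S_{\Gamma}$, so it acts on the subtree $S_{g_{C}}:=g_{C}.T_{H}\cap S_{\Gamma}$; since $\Gamma$ is torsion-free (Proposition~\ref{prop:Schottky}) this action is free, whence $\Gamma^{C}$ is free and $\Gamma^{C}\cong\pi_{1}\!\left(\Gamma^{C}\backslash S_{g_{C}}\right)$. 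It therefore suffices to prove that $\Gamma^{C}\backslash S_{g_{C}}$ is a finite graph, and I would deduce this from the projection $q:\Gamma^{C}\backslash S_{g_{C}}\to\Gamma\backslash S_{\Gamma}=\core(X)$. As $\core(X)$ is finite, it is enough that each fiber of $q$ be finite: choosing a representative $\bv_{*}\in S_{g_{C}}\subseteq g_{C}.T_{H}$ of a nonempty fiber, the free $\Gamma$-action identifies the $\Gamma^{C}$-orbit of $\gamma\bv_{*}$ with the class of $\gamma^{-1}$ in $\Gamma(C,\bv_{*})$ (using $\gamma\bv_{*}\in g_{C}.T_{H}\iff\bv_{*}\in\gamma^{-1}g_{C}.T_{H}$), and $\Gamma(C,\bv_{*})$ is finite by Lemma~\ref{lem:fin_lim_circles}(1). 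Hence every fiber is finite and $\Gamma^{C}$ is finitely generated.

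For the limit set, the inclusion $\Lambda(\Gamma^{C})\subseteq C\cap\Lambda$ is immediate, since $\Gamma^{C}\subseteq\Gamma$ gives $\Lambda(\Gamma^{C})\subseteq\Lambda$ while $\Gamma^{C}$ preserving $g_{C}.T_{H}$ confines its limit set to $\partial(g_{C}.T_{H})=C$. The real content is the reverse inclusion $C\cap\Lambda\subseteq\Lambda(\Gamma^{C})$. I would fix $\alpha\in C\cap\Lambda$ and a base vertex $\bv\in S_{g_{C}}$ (nonempty by the ray constructed in Proposition~\ref{prop:infsubset}); then $[\bv,\alpha)\subseteq S_{g_{C}}\subseteq S_{\Gamma}$ projects into the finite graph $\core(X)$, so some vertex is revisited infinitely often. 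This yields vertices $\bw_{k}\in[\bv,\alpha)$ with $d(\bv,\bw_{k})\to\infty$ and $\bw_{k}=\gamma_{k}\bw_{0}$ for $\gamma_{k}\in\Gamma$, all lying in $g_{C}.T_{H}$, so each $\gamma_{k}^{-1}$ represents a class of $\Gamma(C,\bw_{0})$. As $\Gamma C$ is closed this set is finite (Lemma~\ref{lem:fin_lim_circles}(1)), so infinitely many $\gamma_{k}^{-1}$ share a class; for $k<l$ among them the element $\eta_{k,l}:=\gamma_{l}\gamma_{k}^{-1}$ then lies in $\Gamma^{C}$ and satisfies $\eta_{k,l}\bw_{k}=\bw_{l}$.

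Finally I would show that these elements detect $\alpha$. Each $\eta_{k,l}$ is hyperbolic (Proposition~\ref{prop:Schottky}) and moves $\bw_{k}$ to the point $\bw_{l}$ farther along $[\bv,\alpha)$; a standard computation in the tree shows $\ax(\eta_{k,l})$ contains the central subsegment of $[\bw_{k},\bw_{l}]$ with $\eta_{k,l}$ translating toward $\alpha$, so that $[\bv,\eta_{k,l}^{+})$ agrees with $[\bv,\alpha)$ to depth $\tfrac12\big(d(\bv,\bw_{k})+d(\bv,\bw_{l})+\op{length}(\eta_{k,l})\big)\ge d(\bv,\bw_{k})$. Letting $d(\bv,\bw_{k})\to\infty$ forces $\eta_{k,l}^{+}\to\alpha$, and since $\eta_{k,l}^{+}\in\Lambda_{\ax}^{C}\subseteq\Lambda(\Gamma^{C})$ with $\Lambda(\Gamma^{C})$ closed, we get $\alpha\in\Lambda(\Gamma^{C})$. (If one prefers, Lemma~\ref{lem:fin_lim_circles}(2) first removes the axis points $C\cap\Lambda_{\ax}^{C}$, which lie in $\Lambda(\Gamma^{C})$ by definition, leaving only $\alpha\in\Lambda^{*}\cap C$ for this argument.) I expect the main obstacle to be exactly this reverse inclusion: converting the mere $\Gamma$-recurrence of $[\bv,\alpha)$ into $\core(X)$ into honest elements of the stabilizer $\Gamma^{C}$—which is where the closedness of $\Gamma C$ enters through local finiteness—and then controlling the tree geometry precisely enough that the attracting fixed points $\eta_{k,l}^{+}$ converge to $\alpha$.
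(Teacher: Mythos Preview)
Your proposal is correct and follows essentially the paper's approach: both show that the natural map $\Gamma^{C}\backslash(g_{C}.T_{H}\cap S_{\Gamma})\to\core(X)$ has finite fibers via Lemma~\ref{lem:fin_lim_circles}(1), deduce finite generation of $\Gamma^{C}$ from finiteness of this quotient graph, and then identify $C\cap\Lambda=\partial(g_{C}.T_{H}\cap S_{\Gamma})$ with $\Lambda(\Gamma^{C})$. The only difference is presentational: the paper asserts this last identification directly from the cocompactness of the $\Gamma^{C}$-action on $g_{C}.T_{H}\cap S_{\Gamma}$, whereas you unwind the underlying recurrence argument and construct the hyperbolic elements $\eta_{k,l}\in\Gamma^{C}$ explicitly.
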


\begin{proof}
Let $C=g_{C}H$ for some $g_{C}\in G$.
	Let 
$
	\pi_{C}:T_{G}\rightarrow\Gamma^{C}\backslash T_{G}
$
be the quotient map and let $X_{C}=\Gamma^{C}\backslash g_{C}.T_{H}$.
Since $\Gamma C$ is closed, by Lemma \ref{lem:fin_lim_circles}, any vertex in $\Gamma g_{C}.T_{H}$ is locally $\Gamma C$-finite.
Thus for each vertex in the subset $\pi_{C}^{-1}(X_{C})$ of $T_{G}$ is also locally $\Gamma C$-finite.

Define
$
	f:X_{C}\rightarrow X=\Gamma\backslash T_{G}
$
by $f(\Gamma^{C}.\bv)=\Gamma.\bv$.
Note that
$$f^{-1}(\Gamma.\bv)=\{\Gamma^{C}.\bw:\bw\in\Gamma.{\bv}\cap g_{C}.T_{H}\}.$$
We first claim that $f$ is proper.
Indeed, 
if $f^{-1}(\Gamma.\bv)$ is infinite,
there exists an infinite sequence $\Gamma^{C}\gamma_{i}.\bv\in f^{-1}(\Gamma.\bv)$.
It follows that
$$\bv=\gamma_{i}^{-1}.\bw\in\gamma_{i}^{-1}.(\Gamma.\bv \cap g_{C}.T_{H})\subset \gamma_{i}^{-1}g_{C}.T_{H},$$
thus $[\gamma_{i}^{-1}]\in\Gamma(C,\bv)$.
Since $\Gamma(C,\bv)$ is finite by locally $\Gamma C$-finiteness,
there exists an infinite set $\{\Gamma^{C}\gamma_{i_{k}}.\bv\}$ such that $[\gamma_{i_{k}}^{-1}]$'s are the same elements in $\Gamma(C,\bv)$.
It follows that $\gamma_{i_{j}}\gamma_{i_{k}}^{-1}\in\Gamma^{C}$, but $\Gamma^{C}\gamma_{j}.\bv\neq\Gamma^{C}\gamma_{i_{k}}.\bv$, which is a contradiction.

Now define
$$
	X_{C}'=f^{-1}(\core(X)\cap f(X_{C}))=\Gamma^{C}\backslash (S_{\Gamma}\cap g_{C}.T_{H}).
$$
Note that $X_{C}'$ is a finite subset of $X_{C}$ since $\core(X)$ is finite and $f$ is proper.
Moreover, $X_{C}'=\core(X_{C})$ since
$\core(X_{C})=\Gamma^{C}\backslash(\hull(\Lambda_{C}))=\Gamma^{C}\backslash (\hull(C)\cap S_{\Gamma})$, where $\Lambda_{C}=\cl{\Lambda_{\ax}^{C}}.$
Note that the fundamental group $\pi_{1}(X_{C})$ is isomorphic to $\Gamma^{C}$.
Since $\core(X_{C})$ determines $\pi_{1}(X_{C})$, 
it follows that $\Gamma^{C}$ is finitely generated.
Moreover, we also have $\partial (\pi_{C}^{-1}(X_{C}'))=\partial(\pi_{C}^{-1}(\core(X_{C})))=\Lambda_{C}$.
Therefore, it follows that $\Lambda_{C}=\Lambda(\Gamma)\cap C$.
\end{proof}

\subsection{Dense orbits in $\cal{C}_\Lambda$}\label{ss:dense}
Recall that
$$\scc=\{e^{i\omg\theta}\in 1+p\zpo: \theta\in p\zp\}.$$
Define a function
$
\chi:\Gamma\rightarrow\zpo^\times$ by $\chi(\tau)=\Theta_\ut$, where $a_{\ut}=r_{\ut}\Theta_{\ut}$ is defined in (\ref{asubtau}).
By (2) of Definition \ref{def:acyl}, there is a hyperbolic element $\tau\in\Gamma$ such that 
$\{\chi(\tau)^{n}:n\in\z_{\geq0}\}$ is dense in 
$\mup\xx\scc$.
Let us fix such element $\ut$ throughout this subsection.
From the introduction of Section \ref{s:unip}, recall that
$$
\cal{C}_\Lambda=\{C\in\cal{C}:C\cap\Lambda\neq\varnothing\}.
$$

\begin{prop}\label{prop:denseorbit}
Let $C\in\cal{C}_{\Lambda}$ with $C=g_CH$ for some $g_C\in G$. 
If $\Gamma C$ is not closed, then $\Gamma C$ is dense in $\cal{C}_\Lambda$.
\end{prop}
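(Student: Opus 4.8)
The plan is to prove the reverse inclusion $\cl{\Gamma C}\supseteq\cal{C}_{\Lambda}$; the inclusion $\cl{\Gamma C}\subseteq\cal{C}_{\Lambda}$ is immediate because $\cal{C}_{\Lambda}$ is $\Gamma$-invariant and closed (if $D_{i}\to D$ with $x_{i}\in D_{i}\cap\Lambda$, a subsequence of the $x_{i}$ converges to a point of $D\cap\Lambda$) and $C\in\cal{C}_{\Lambda}$. First I would record that the hypotheses of Proposition \ref{prop:goodcircle} hold verbatim: $\Gamma C$ is not closed and $C\cap\Lambda\neq\varnothing$. Writing $z=\Gamma g_{C}$ and using the identification $\Gamma\backslash\cal{C}=\FM/H$, the orbit $zH$ is not closed, and Proposition \ref{prop:infsubset} produces at least two points of $C\cap\Lambda$, so some representative $g_{C}h$ has $g_{C}h.0,g_{C}h.\infty\in\Lambda$ and $zH\cap\RFM\neq\varnothing$; thus $zH$ is chaotic and the machinery of Corollary \ref{cor:dense} that underlies Proposition \ref{prop:goodcircle} is available.

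Next I would build a large family of circles inside $\cl{\Gamma C}$ by spinning. Let $\tau\in\Gamma$ be the fixed element of this subsection, for which $\{\chi(\tau)^{n}\}=\{\Theta_{\tau}^{n}\}$ is dense in $\mup\xx\scc$ by Definition \ref{def:acyl}(2). By Proposition \ref{prop:goodcircle} there is a circle $D_{0}\in\cl{\Gamma C}$ with $\tau^{-},\tau^{+}\in D_{0}$. The element $\tau$ fixes $\tau^{\pm}$ and permutes the pencil $\mathcal{P}$ of all circles through $\tau^{\pm}$: in coordinates sending $(\tau^{-},\tau^{+})$ to $(0,\infty)$ this pencil is $\{w\cl{\qp}:w\in\qpo^{\times}/\qp^{\times}\}$ and $\tau$ acts by $w\mapsto\Theta_{\tau}w$ (the factor $p^{-n_{\tau}}r_{\tau}\in\qp^{\times}$ being absorbed). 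Since $\{\Theta_{\tau}^{n}\}$ is dense in $\mup\xx\scc$, its image is dense in $\qpo^{\times}/\qp^{\times}\cong(\mup/\mu_{p-1})\xx\scc$, so $\{\tau^{n}D_{0}\}$ is dense in $\mathcal{P}$. As $\cl{\Gamma C}$ is closed and $\Gamma$-invariant, $\mathcal{P}\subseteq\cl{\Gamma C}$; applying $\gamma\in\Gamma$ (and noting that $\gamma\tau\gamma^{-1}$ again satisfies Definition \ref{def:acyl}(2), with axis $\gamma.\ax(\tau)$ and the same rotation $\Theta_{\tau}$), the entire pencil $\gamma\mathcal{P}$ of circles through $(\gamma.\tau^{-},\gamma.\tau^{+})$ lies in $\cl{\Gamma C}$ for every $\gamma$.

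Finally I would cover $\cal{C}_{\Lambda}$ by a collapsing-pencil argument. Fix an arbitrary $D'\in\cal{C}_{\Lambda}$ and a point $\al\in D'\cap\Lambda$. Because $\Gamma$ is convex cocompact, $\al$ is a conical limit point, so there are $\gamma_{i}\in\Gamma$ with $\gamma_{i}.o\to\al$ radially and hence $\gamma_{i}.\tau^{-}\to\al$ and $\gamma_{i}.\tau^{+}\to\al$. Each pencil $\gamma_{i}\mathcal{P}\subseteq\cl{\Gamma C}$ contains every circle through the pair $(\gamma_{i}.\tau^{-},\gamma_{i}.\tau^{+})$, and as this pair collapses onto $\al\in D'$ I would select from each $\gamma_{i}\mathcal{P}$ the circle $F_{i}$ whose angular parameter is aligned with $D'$ near $\al$, and verify that $F_{i}\to D'$ in the Hausdorff metric. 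Passing to the limit yields $D'\in\cl{\Gamma C}$, and since $D'$ was arbitrary this gives $\cl{\Gamma C}=\cal{C}_{\Lambda}$, which by Proposition \ref{prop:infsubset} and Lemma \ref{lem:closed_discrete} is exactly case (4) of Theorem \ref{thm:main}.

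The hard part will be this last step. Each $\gamma_{i}\mathcal{P}$ supplies circles through base points arbitrarily close to $\al\in D'$, but the two base points are \emph{nearly coincident}, so a given pencil only weakly constrains a circle; producing a genuinely Hausdorff-convergent sequence $F_{i}\to D'$ demands quantitative control of the rate at which $\gamma_{i}.\tau^{\pm}$ approach $\al$ relative to their mutual separation and to $D'$. This is precisely where conicality of $\al$ (convex cocompactness) and the bounded recurrence of Proposition \ref{prop:k-thick} enter, playing the role of the $K$-thickness and Sierpi\'nski-carpet input of \cite{MMO}; establishing this convergence is the technical heart of the argument, and I would isolate it as a separate convergence lemma for circles before assembling the three steps above.
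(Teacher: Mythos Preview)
Your first two steps are correct; obtaining the entire pencil $\mathcal{P}$ through $(\tau^{-},\tau^{+})$ inside $\cl{\Gamma C}$ is essentially what the paper extracts in its Case~1. The gap is step~3. When both $\gamma_{i}.\tau^{-}$ and $\gamma_{i}.\tau^{+}$ collapse to the single point $\alpha$, the pencils $\gamma_{i}\mathcal{P}$ do \emph{not} accumulate on every circle through $\alpha$: already in the real model the pencil through two nearby points $a,b$ limits, as $a,b\to\alpha$, only onto circles tangent at $\alpha$ to the direction of $b-a$, and a target $D'$ meeting that direction transversally is never approximated. In the tree the collapse carries analogous combinatorial data (where the geodesic $(\gamma_{i}.\tau^{-},\gamma_{i}.\tau^{+})$ branches off the ray to $\alpha$) that conicality alone does not let you prescribe to match $D'$, so the proposed convergence lemma fails as stated rather than being merely technical. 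A repair using two distinct points $\alpha,\beta\in D'\cap\Lambda$ would work if you knew the single orbit $\{\gamma.\ax(\tau):\gamma\in\Gamma\}$ is dense in the geodesic space of $\Lambda$, but that is strictly stronger than Proposition~\ref{prop:axisapp} and is not established in the paper.

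The paper sidesteps this with a symmetry trick. If the target $D$ has $\Gamma D$ not closed, apply Proposition~\ref{prop:goodcircle} to $D$ as well, producing $E'\in\cl{\Gamma D}$ with $\tau^{\pm}\in E'$; since $E$ and $E'$ lie in the same pencil, spinning by $\tau$ gives $E'\in\cl{\Gamma C}$, and then one simply \emph{reverses} a sequence $\zeta_{i}D\to E'$ to conclude $\zeta_{i}^{-1}E'\to D$. If $\Gamma D$ is closed, the paper approximates $D$ by circles with non-closed orbit: via Proposition~\ref{prop:axisapp} choose $\zeta_{i}\in\Gamma-\Gamma^{D}$ with $\ax(\zeta_{i})$ converging to a geodesic in $\hull(D)\cap S_{\Gamma}$, and let $C_{i}$ be the circle through $\zeta_{i}^{\pm}$ and a suitable third point of $D$; then $\zeta_{i}^{\pm}\in C_{i}$ but $\zeta_{i}\notin\Gamma^{C_{i}}$, so Lemma~\ref{lem:fin_lim_circles}(2) forces $\Gamma C_{i}$ not closed, the previous case gives $C_{i}\in\cl{\Gamma C}$, and $C_{i}\to D$ by construction.
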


\begin{proof}
Recall that $g\in G$ with $(g.0,g.\infty,g.1)$, where $g.0,g.\infty,g.1\in\cl{\qpo}$ and mutually distinct. 

	Let $C=g_{C}H$ such that $\Gamma C$ is not closed. 
Let $D=g_{D}H$ be an arbitrary element in $\cal{C}_{\Lambda}$.
Let $
		\ut=g_{\ut}d_{\ut}g_{\ut}^{-1}\sim
		d_{\ut}=\diag(p^{n_{\ut}}a_{\ut},1),
$
where $n_{\ut}\in\z_{\geq1}$ and $a_{\ut}\in\zpo^{\xx}$.

\noindent{\bf Case 1.} $\Gamma D$ is not closed:
By Proposition \ref{prop:goodcircle}, there exist $E\in\cl{\Gamma C}$ and $E'\in\cl{\Gamma D}$ such that $\ut^{\pm}\in E\cap E'$.
By Remark \ref{rmk4}, choosing suitable representatives for each circle, respectively, we have
	\begin{align}
		&(g_{E}.0,g_{E}.\infty,g_{E}.1)
		=(\ut^{-},\ut^{+},g_{E}.1)
		=(g_{\ut}.0,g_{\ut}.\infty,g_{\ut}.\al)
		\label{eqn:circle1}\\ 
		&(g_{E'}.0,g_{E'}.\infty,g_{E'}.1)
		=(\ut^{-},\ut^{+},g_{E'}.1)
		=(g_{\ut}.0,g_{\ut}.\infty,g_{\ut}.\beta)\label{eqn:circle2},
	\end{align}
where $\al=g_{\ut}^{-1}g_{E}.1$ and $\beta=g_{\ut}^{-1}g_{E'}.1$.
Let
$\al=\Theta_{\al}r_{\al}$ and $\beta=\Theta_{\beta}r_{\beta}$, where $\Theta_{\al},\Theta_{\beta}\in\mup\xx\scc$, and $r_{\al},r_{\beta}\in\qp^{(1)}$.

Letting $d_{\al}=\op{diag}(\al,1)$ and $d_{\beta}=\op{diag}(\beta,1)$, 
the equations (\ref{eqn:circle1}) and (\ref{eqn:circle2}) give us $g_{\ut}d_{\al}=g_{E}$ and $g_{\ut}d_{\beta}=g_{E'}$ 
since there is a unique element of $G$ that corresponds to each frame.

Note that
	$$
		\ut^{k}.(g_{\ut}.0,g_{\ut}.\infty,g_{\ut}.\al)
		=\ut^{k}g_{\ut}d_{\al}.(0,\infty,1)
		=g_{\ut}d^{k}_{\ut}d_{\al}.(0,\infty,1)
	$$
so that 	
	$
		\ut^{k}E=\ut^{k}(g_{E}H)=(g_{\ut}d_{\ut}^{k}d_{\al})H
	$
for every $k\in\z$.
Since $\langle a_{\ut}\rangle$ is dense in $\mup\xx\scc$, 
there is a sequence $\{m_{j}\}_{j=0}^{\infty}\subset\z_{\geq0}$ such that 
$$
	a^{m_{j}}_{\ut}\al=a^{m_{j}}_{\ut}\Theta_{\al}r_{\al}\rightarrow \Theta_{\beta}r_{\al}\in\qpo^{\xx}
$$ 
as $j\rightarrow\infty$.
For convenience, let $m_{0}=0$.

Now let us define $d=\op{diag}(p,1)\in H$.
Note that
	\begin{equation}\label{diagonal}
	{\small
		d^{m_{j}}_{\ut}d_{\al}d^{-n_{\ut}m_{j}}
		=\diag(a_{\ut}^{m_{j}}\al,1)
		}
	\end{equation}

Let $d_{r_{\al}^{-1}r_{\beta}}=\diag(r_{\al}^{-1}r_{\beta},1)\in H$.
For every $j\in\z$,
choose a representative of $\ut^{m_{j}}E
=\ut^{m_{j}}g_{E}H$
as a right $H$-coset by
	$$
		\ut^{m_{j}}g_{E}
		(
		d^{-n_{\ut}m_{j}}
		d_{r_{\al}^{-1}r_{\beta}}
		H)
		=g_{\ut}
		\diag(a_{\ut}^{m_{j}}\Theta_{\al}r_{\beta},1)H
	$$
as we observed at (\ref{diagonal}).
Denote 
$\ut^{m_{j}}g_{E}
d^{-n_{\ut}m_{j}}
d_{r_{\al}^{-1}r_{\beta}}
$
by $g_{j}\in G$.
Define a sequence of frames,
$
	\{(\ut^{-},\ut^{+},g_{\ut}.(a_{\ut}^{m_{j}}\Theta_{\al}r_{\beta}))\}_{j=0}^{\infty}.
$
Letting $j\rightarrow\infty$, we have 
$$
g_{j}\sim(\ut^{-},\ut^{+},g_{\ut}.(a_{\ut}^{m_{j}}\Theta_{\al}r_{\beta}))
\rightarrow (\ut^{-},\ut^{+},g_{\ut}.\beta)\sim g_{E'}.
$$
Hence it follows that $g_{j}\rightarrow g_{E'}$ as $j\rightarrow\infty$.
Meanwhile, for every $j$,
$$
	g_{j}H=\ut^{m_{j}}g_{E}
	(
	d^{-n_{\ut}m_{j}}
	d_{r_{\al}^{-1}r_{\beta}}
	H
	)=\ut^{m_{j}}g_{E}H=\ut^{m_{j}}E.
$$
Therefore, we conclude that
$
	\ut^{m_{j}}E\longrightarrow E'\in\cl{\Gamma C}
$
as $j\rightarrow\infty$.
	

To approach $D$ from $C$, let $\{\zeta_{i}\}_{i=0}^{\infty}\subset\Gamma$ such that $\zeta_{i}D\rightarrow E'$ in $\cl{\Gamma D}$.
There exist sequences $e_{i}\rightarrow e$ in $G-H$ 
and $\{h_{i}\}_{i=0}^{\infty}\subset H$
such that
$
	\zeta_{i}g_{D}h_{i}=g_{E'}e_{i}.
$
so that 
$
	g_{D}h_{i}e_{i}^{-1}=\zeta_{i}^{-1}g_{E'}.
$
Therefore, $D\in\cl{\Gamma C}$ since
$$
	\cl{\Gamma C} \ni\zeta_{i}^{-1}E'=\zeta_{i}^{-1}g_{E'}H
	=g_{D}h_{i}e_{i}^{-1}H
	\longrightarrow
	g_{D}H=D.
$$

\noindent{\bf Case 2.} $\Gamma D=\cl{\Gamma D}$:
Recall that $\Lambda^{*}=\Lambda-\Lambda_{\ax}$.
We know that $\Gamma D$ is discrete since it is closed.
Moreover, $D\cap\Lambda\subset\Lambda^{*}\cup\Lambda_{\ax}^{D}$ by (2) of Lemma \ref{lem:fin_lim_circles}.
Let $(a,b)\subset g_{D}.T_{H}\cap S_{\Gamma}$ be a geodesic for some distinct $a,b\in D\cap\Lambda$.
By Proposition \ref{prop:axisapp}, 
there exists a sequence $\{\zeta_{i}\}_{i=0}^{\infty}\subset \Gamma$ such that $\ax(\zeta_{i})\rightarrow(a,b)$ as $i\rightarrow\infty$.
Besides, from Section \ref{ss:ax}, we can choose $\{\zeta_{i}\}_{i=0}^{\infty}\subset\Gamma-\Gamma^{D}$ by
constructing $\zeta_{i}$ with $\ax(\zeta_{i})\not\subset \hull(D)$ for each $i$.
By choosing a subsequence if necessary, we may assume that
\begin{equation}\label{axisinc}
\ax(\zeta_{i})\cap(a,b)\subseteq\ax(\zeta_{i+1})\cap(a,b)
\end{equation} 
for all $i\in\N$.
In addition, let $f_{i}\in G$ be an element that conjugates $\zeta_{i}$ to a diagonal element.
It follows that $\zeta_{i}^{+}=f_{i}.\infty$ and $\zeta_{i}^{-}=f_{i}.0$.
Let $\bv\in\ax(\zeta_{0})\cap(a,b)$ so that $\bv\in\ax(\zeta_{i})\cap(a,b)$ for all $i$ by (\ref{axisinc}).
Now choose a vertex $\bw\in N_{1}(\bv)\cap (g_{D}.T_{H}-\ax(\zeta_{0}))$ such that $\bw\neq\bv$.
If $\bw\in (0,\infty)$, then replace $\bw$ by a vertex in $N_{1}(\bw)\cap g_{D}.T_{H}-(0,\infty).$
Let $B$ be the branch containing $\bw$ at $\bv$.
Choose $\bw_{i}\in B\cap (g_{D}.T_{H}-f_{i}.T_{H})$.
Now choose a geodesic ray $\ell_{i}=[\bw_{i},c_{i})$ in $g_{D}.T_{H}-f_{i}.T_{H}$ with some $c_{i}\in D-f_{i}.\cl{\qp}$.
Here, note that $[\bv,c_{i})\subset g_{D}.T_{H}$.

From our choice of $c_{i}$, we remark the following results.
There exists $n\in\z$ such that $c_{i}\in p^{n}\zpo$ for all $i\in\z_{\geq0}$ since $\bw\notin (0,\infty)$.
Thus, since $p^{n}\zpo$ is compact and $D=g_{D}.\cl{\qp}$ is closed in $\cl{\qpo}$, there exists a subsequence $\{c_{m_{j}}\}_{j=0}^{\infty}$ such that $c_{m_{j}}\rightarrow c$ for some $c\in D\cap p^{n}\zpo$.
By definition of $c_{i}$'s, it follows that $a,b,$ and $c$ are mutually distinct.
If we let $g_{i}$ be the element in $G$ corresponding to the frame $(\zeta_{i}^{-},\zeta_{i}^{+},c_{i})$ and $C_{i}=g_{i}H$,
then it follows that $\zeta_{i}\notin\Gamma^{C_{i}}$ since $c_{i}\notin f_{i}.\cl{\qp}$.


Finally, for any $i\in\z_{\geq0}$, $C_{i}$ has the $\Gamma$-orbit that is not closed since $\hull(C_{i})$ contains $\ax(\zeta_{i})$ while $\zeta_{i}\notin\Gamma^{C_{i}}$, thus by (2) of Lemma \ref{lem:fin_lim_circles}.
Hence, by Case 1, there exists a sequence $\{\rho_{i,l}\}_{l=0}^{\infty}\subset\Gamma$ 
such that $\rho_{i,l}C\rightarrow C_{i}$ as $l\rightarrow\infty$ for every $i$.
Consequently, we obtain a converging sequence of circles $\{C_{m_{j}}\}_{j=0}^{\infty}\subset\cl{\Gamma C}$, which converges to $D$.
\end{proof}

\subsection{Proof of main theorem}
Finally, we proceed to the proof of Theorem \ref{thm:main} through the following proposition.
\begin{prop}
Let $I=C\cap\Lambda$ be the intersection of a circle $C$ with the limit set $\Lambda$ of $\Gamma$. Then either $I=\varnothing$,  $I=C$, or $\abs{I}=\infty$ and $I\notin\Lambda$.
\end{prop}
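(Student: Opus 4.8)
The plan is to reduce the whole statement to Proposition \ref{prop:infsubset}, which carries the substantive content, and then to dispose of the remaining alternatives by an elementary dichotomy. First I would handle the trivial alternative: if $I=C\cap\Lambda=\varnothing$, we are in the first case and there is nothing to prove. So assume henceforth that $I\neq\varnothing$, i.e. that the circle $C$ meets the limit set.

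The key step is to invoke Proposition \ref{prop:infsubset}: as soon as $C\cap\Lambda\neq\varnothing$, the intersection is automatically an infinite subset of $\Lambda$, hence $\abs{I}=\infty$. This is exactly where high-branchedness, namely condition (1) of Definition \ref{def:acyl}, enters, since it forces the branching at vertices of $S_{\Gamma}$ to propagate along $\hull(C)$ and thereby produces infinitely many distinct limit points inside $C$. Without such a hypothesis the finite nonempty case could survive, as the non-example in Example \ref{ex:subgroup} shows; indeed this excluded possibility is precisely the phenomenon corresponding to case (4) of Theorem 1.5 of \cite{MMO} with $\abs{C\cap\Lambda}=1$ (compare Remark \ref{rmk2}). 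Granting the proposition, the assumption $I\neq\varnothing$ already yields $\abs{I}=\infty$.

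It then remains only to split according to whether $I$ exhausts $C$. If $I=C$, the whole circle lies in $\Lambda$ and we are in the second alternative. Otherwise $I\subsetneq C$, and combining this with $\abs{I}=\infty$ gives the third alternative, in which $I$ is an infinite proper subset of $C$. These three cases are mutually exclusive and jointly exhaustive, so the trichotomy follows. Since the real difficulty has been packaged into Proposition \ref{prop:infsubset}, the only obstacle at this stage is bookkeeping: verifying that the three alternatives partition all possibilities and that the finite nonempty intersection is genuinely ruled out.
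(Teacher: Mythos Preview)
Your argument is correct and is exactly what the paper intends: the proposition is stated there without its own proof, functioning as a summary trichotomy extracted from Proposition~\ref{prop:infsubset}, and your reduction to that proposition together with the obvious case split on whether $I=C$ is all that is needed. The only thing worth noting is that the clause ``$I\notin\Lambda$'' in the displayed statement is evidently a typo for ``$I\neq C$'' (equivalently $I\subsetneq C$), as you correctly interpreted; compare the case split in Theorem~\ref{thm:main}.
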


\begin{proof}[proof of Theorem \ref{thm:main}]
Let $C=g_{C}H$ for some $g_{C}\in G$.
If $I=\varnothing$, then $\Gamma C$ is discrete since $\Gamma$ acts properly discontinuously on $\partial T_G-\Lambda$ and $\Gamma\backslash g_C.T_H$ is an infinite subtree of $\Gamma\backslash T_G$.

If $I\neq\varnothing$, then we will consider the following two cases. First, if $\Gamma C$ is closed, then it is discrete by Lemma \ref{lem:closed_discrete}. By Theorem \ref{thm:discreteorbit}, $I=\Lambda(\Gamma^C)$ and $\Gamma^C$ is conjugate to a finitely generated Schottky subgroup of $\PGLt(\qp)$ since $\Gamma^C=gHg^{-1}\cap\Gamma$. If $\Gamma C$ is not closed, then, by Proposition \ref{prop:denseorbit}, we have $\cl{\Gamma C}=\cal{C}_\Lambda$.

\end{proof}

\end{document}